\documentclass[reqno]{amsart}

\usepackage{xypic,hyperref,todonotes,comment,tikz}
\usetikzlibrary{arrows,calc,decorations.markings,math,arrows.meta}
\usepackage{graphics,amssymb,mathtools}
\usepackage{latexsym}
\usepackage{mathrsfs}
\xyoption{curve}
\usepackage{hyperref}
\hypersetup{ 
colorlinks,
citecolor=blue,
filecolor=blue,
linkcolor=black,
urlcolor=blue
}

\newtheorem{lemma}{Lemma}[section]
\newtheorem{proposition}[lemma]{Proposition}
\newtheorem{theorem}[lemma]{Theorem}
\newtheorem{corollary}[lemma]{Corollary}
\newtheorem*{conjecture}{Conjecture}
\newtheorem*{theoremA}{Theorem}

\theoremstyle{definition}
\newtheorem{example}[lemma]{Example}
\newtheorem{definition}[lemma]{Definition}
\newtheorem{remark}[lemma]{Remark}

\newcommand{\mc}[1]{\mathcal{{#1}}}
\newcommand{\mbb}[1]{\mathbb{#1}}
\newcommand{\mrm}[1]{\mathrm{#1}}
\newcommand{\mtt}[1]{\mathtt{#1}}

\newcommand{\Hom}{\mathrm{Hom}}
\newcommand{\End}{\mathrm{End}}

\newcommand{\Ext}{\mathrm{Ext}}
\newcommand{\Spec}{\mathrm{Spec}}
\newcommand{\ot}{\otimes}

\newcommand{\cA}{\mc{A}}

\newcommand{\cB}{\mc{B}}
\newcommand{\cC}{\mc{C}}
\newcommand{\GL}{\mathrm{GL}}

\newcommand{\D}{\mathrm{D}}
\newcommand{\gl}{\mathrm{gl}}
\newcommand{\g}{\mathtt{g}}

\newcommand{\G}{\mathrm{G}}

\newcommand{\Z}{\mathscr{Z}}
\newcommand{\HH}{H\! H}

\newcommand{\rep}{\mathtt{rep}}
\newcommand{\gr}{\mathrm{gr}}

\newcommand{\bG}{\mathbb{G}}
\newcommand{\e}{\varepsilon}
\renewcommand{\G}{\mathbb{G}}
\renewcommand{\u}{\mathfrak{u}}
\renewcommand{\hat}{\widehat}
\renewcommand{\O}{\mathscr{O}}
\renewcommand{\b}{\ast}

\bibliographystyle{plain}


\title[Cohomology for doubles of infinitesimal group schemes]{Cohomology for Drinfeld doubles of some infinitesimal group schemes}
\date{\today}

\author{Eric Friedlander}
\author{Cris Negron}
\thanks{The second author was supported by NSF Postdoctoral Research Fellowship DMS-1503147}

\address{Eric Friedlander, Department of Mathematics, University of Southern California, Los Angeles, CA, USA}
\email{ericmf@usc.edu}

\address{Cris Negron, Department of Mathematics, Massachusetts Institute of Technology, Cambridge, MA, USA}
\email{negronc@mit.edu}

\begin{document}
\maketitle

\begin{abstract}
Consider a field $k$ of characteristic $p > 0$, $\G_{(r)}$ the $r$-th Frobenius kernel of a smooth algebraic group $\G$, $\D\G_{(r)}$ the Drinfeld double of $\G_{(r)}$, and $M$ a finite dimensional $\D\G_{(r)}$-module.  We prove that the cohomology algebra $H^*(\D\G_{(r)},k)$ is finitely generated and that $H^*(\D\G_{(r)},M)$ is a finitely generated module over this cohomology algebra.  We exhibit a finite map of algebras $\theta_r:H^*(\G_{(r)},k) \otimes S(\g) \to H^*(\D\G_{(r)},k)$ which offers an approach to support varieties for $\D\G_{(r)}$-modules.   For many examples of interest, $\theta_r$ is injective and induces an isomorphism of associated reduced schemes.  Additionally, for $M$ an irreducible $\D\G_{(r)}$-module, $\theta_r$ enables us to identify the support
variety of $M$ in terms of the support variety of $M$ viewed as a $\G_{(r)}$-module.
\end{abstract}

\section{Introduction}

For a Hopf algebra $A$ over a field $k$, we denote by $H^*(A,k)=\Ext^*_A(k,k)$ the Hopf
cohomology and we denote by $H^*(A,V) = \Ext_A^*(k,V)$ the cohomology of $A$ with values 
in a finite dimensional $A$-module $V$.  The goal of this paper is to prove the following conjecture for an interesting class of examples.

\begin{conjecture}[The finite generation conjecture]
For any finite dimensional Hopf algebra $A$, and finite dimensional $A$-module $V$, the cohomology $H^\b(A,k)$ is a finitely generated $k$-algebra and $H^\b(A,V)$ is a finitely generated module over $H^\b(A,k)$.
\end{conjecture}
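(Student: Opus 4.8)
The conjecture splits into two parts: that $R:=H^\b(A,k)$ is a finitely generated $k$-algebra, and that $H^\b(A,V)$ is a finitely generated $R$-module for every finite-dimensional $V$. Since $A$ is a Hopf algebra, $R$ is connected and graded-commutative, so by graded Nakayama it is a finitely generated algebra as soon as its augmentation ideal $R^{>0}$ is finitely generated as an ideal; the first part is thus equivalent to $R$ being left Noetherian. The second part reduces, via a composition series for $V$ and the long exact sequences in cohomology, to the case $V$ simple; and once $R$ is Noetherian, Hopf duality $\Ext^\b_A(M,N)\cong H^\b(A,M^\b\ot N)$ shows that finiteness over $R$ of all $H^\b(A,S)$ with $S$ simple is equivalent to finiteness over $R$ of the entire Yoneda $\Ext$-algebra $\bigoplus_{S,T}\Ext^\b_A(S,T)$. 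So everything concentrates on two points: $R$ Noetherian, and $H^\b(A,S)$ an $R$-finite module for $S$ simple.

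The template for such a result is the Evens--Venkov theorem in finite group cohomology, built from two ingredients: (i) a subalgebra $B\subseteq A$ over which $A$ is projective and whose cohomology $H^\b(B,k)$ is already known to be finitely generated; and (ii) a multiplicative transfer (Evens norm) $H^\b(B,k)\to H^\b(A,k)$ supplying enough even-degree classes to make $R$ module-finite over a polynomial subring. Ingredient (i) is available in wide generality: by the Nichols--Zoeller theorem $A$ is free over any Hopf subalgebra, so one may take $B$ as small and explicit as possible --- e.g.\ the subalgebra generated by a single group-like or skew-primitive element, the Hopf-theoretic surrogate for an elementary abelian $p$-subgroup, whose cohomology is an explicit polynomial-times-exterior algebra. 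The plan is then to run the Lyndon--Hochschild--Serre spectral sequence of a normal Hopf subalgebra, $H^\b(A/\!\!/B,H^\b(B,k))\Rightarrow H^\b(A,k)$, iterating along a filtration of $A$ by normal Hopf subalgebras, and to extract Noetherianity of $R$ by the usual sandwich argument: if $E_2$ is Noetherian and the spectral sequence has only finitely many nonzero differentials --- better, is generated over its bottom edge by permanent cycles lifting to genuine classes in $R$ --- then $E_\infty$, hence $R$, is Noetherian, and the same spectral sequence with coefficients in $S$ controls $H^\b(A,S)$.

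The serious obstacle --- the reason the conjecture remains open in general --- is ingredient (ii): for a Hopf algebra that is not cocommutative there is no known Evens-type norm map, the classical construction resting on a $\Sigma_{[A:B]}$-action on a (co)induced module that has no analogue here; equivalently, one cannot control in general whether the Lyndon--Hochschild--Serre differentials terminate or whether $E_\infty$ is generated by liftable permanent cycles. (Compounding this, $A$ may possess no nontrivial normal Hopf subalgebra at all, so the inductive set-up can break down at the outset.) One therefore proceeds, as in the present paper, by establishing the conjecture for classes of Hopf algebras in which the requisite polynomial classes can be manufactured directly. For $A=\D\G_{(r)}$, the Drinfeld double of a Frobenius kernel, one has at hand the finite generation of $H^\b(\G_{(r)},k)$ (Friedlander--Suslin, via strict polynomial functors and the May spectral sequence) together with an extra reservoir of even classes governed by the adjoint module $\g$, and the crux is to organize these into a \emph{finite} algebra map $\theta_r\colon H^\b(\G_{(r)},k)\ot S(\g)\to H^\b(\D\G_{(r)},k)$: finiteness of $\theta_r$ forces the target to be Noetherian, hence finitely generated, and tracking the same classes through $\Ext_{\D\G_{(r)}}$ yields the module statement for all $\D\G_{(r)}$-modules, irreducibles included. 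The genuinely hard step, even here, is proving $\theta_r$ finite: one must realize the $S(\g)$-factor as honest cohomology classes --- using the canonical pairing datum defining the double --- and then bound the growth of $\Ext_{\D\G_{(r)}}$, typically by comparison with $\Ext$ over $\G_{(r)}$ and over the dual copy $k[\G_{(r)}]$ through a May- or extension-type spectral sequence whose $E_\infty$ is finite over the image of $\theta_r$.
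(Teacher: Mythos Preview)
The statement is a conjecture, and the paper makes no claim to prove it in general; it establishes only the case $A=\D\G_{(r)}$ for $\G$ smooth. Your write-up correctly acknowledges this and offers not a proof but a survey of the obstacles together with a sketch of the paper's special-case argument, so the only thing to assess is the accuracy of that sketch.

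The shape is right --- construct a finite algebra map $\theta_r$ and run an LHS-type spectral sequence associated to the normal inclusion $\O(\G_{(r)})\hookrightarrow\D\G_{(r)}$ --- but you misidentify the source of the polynomial classes. They do \emph{not} come from ``the canonical pairing datum defining the double''; they come from deformation theory. The paper views $\O(\G_{(r+1)})$ as a flat deformation of $\O(\G_{(r)})$ parametrized by $\G_{(r+1)}/\G_{(r)}\cong\G_{(1)}^{(r)}$, and Gerstenhaber's dictionary yields a map $\sigma_\O\colon\g^{(r)}\to H^2(\O(\G_{(r)}),k)$ whose image generates the reduced cohomology. The step you flag as ``genuinely hard'' is then resolved by observing that $\O(\G_{(r+1)})\# k\G_{(r)}$ deforms the double itself over the same base, producing a lift $\sigma_\D\colon\g^{(r)}\to H^2(\D\G_{(r)},k)$ of $\sigma_\O$. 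This lift is exactly what certifies that the classes $\sigma_\O(\g^{(r)})$ are permanent cycles in the Grothendieck spectral sequence $H^s(\G_{(r)},H^t(\O(\G_{(r)}),k))\Rightarrow H^{s+t}(\D\G_{(r)},k)$, after which Lemma~1.6 of Friedlander--Suslin gives finiteness of $\theta_r$. Two smaller corrections: the domain of $\theta_r$ is $H^\ast(\G_{(r)},k)\otimes S(\g^{(r)}[2])$, not $S(\g)$ --- the Frobenius twist is what trivializes the $\G_{(r)}$-action and permits the tensor splitting of the $E_2$-page --- and smoothness of $\G$ is indispensable, since it supplies the ambient $\G_{(r+1)}$ needed for the deformation. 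The spectral sequence in play is Grothendieck/LHS, not May.
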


The conjecture has existed as a question at least since the 90's (see e.g.~\cite{friedlandersuslin97}), and was recently 
stated explicitly in the work of Etingof and Ostrik~\cite{etingofostrik04}. 
In the finite characteristic setting, the conjecture was verified for cocommutative Hopf algebras in the work of Friedlander and Suslin~\cite{friedlandersuslin97} in the $90$'s.  This followed earlier work of Friedlander and Parshall on the cohomology of restricted 
enveloping algebras~\cite{friedlanderparshall86}.   More recently, Drupieski generalized these results to finite super groups~\cite{drupieski16} (i.e., cocommutative Hopf algebras in the symmetric category of $\mathbb{Z}/2\mathbb{Z}$-graded vector spaces).
\par

For a commutative Hopf algebra $A$ over a field of characteristic $p$, one can arrive at the desired finite generation result from the existence of an abstract {\it algebra} isomorphism $A\cong k[\mathbb{Z}/p^{l_1}\mathbb{Z}]\ot\dots\ot k[\mathbb{Z}/p^{l_n}\mathbb{Z}]$ whenever $A$ is local, and the fact that the cohomology $H^\b(A,V)$ only depends on the algebra structure of $A$.
\par

In characteristic $0$ most of the work to date has focused on pointed Hopf algebras.  In~\cite{ginzburgkumar93} (see also~\cite{bnpp14}) Ginzburg and Kumar showed that small quantum groups have finitely generated cohomology.  In~\cite{mpsw10}, Mastnak, Pevtsova, Schauenburg, and Witherspoon verified the finite generation conjecture for most pointed Hopf algebras with abelian group of grouplikes.  Such Hopf algebras were classified by Andruskiewitsch and Schneider, and can be understood broadly as deformations of small quantum groups~\cite{AS}.  For results concerning pointed Hopf algebras with non-abelian grouplikes one can see~\cite{stefanvay16}.
\par

In this work we consider Drinfeld doubles of finite group schemes in characteristic $p>0$.  We recall that the Drinfeld double $\D G$ of a finite group scheme $G$ is the smash product 
\[
\D G=\O(G)\# kG
\]
of the group algebra $kG$ of $G$ acting via the adjoint action on the algebra $\O(G)=(kG)^\ast$ of functions on $G$.  The coalgebra structure on $\D G$ is the product structure $\O(G)^\mrm{cop}\ot kG$. The Drinfeld double $\D G$ is neither commutative nor cocommutative (unless $G$ is commutative) and rarely pointed.  For some examples of the computational and theoretical significance of the double one can see~\cite{etingofgelaki02,etingof02,ksz06,ngschauenburg07,shimizu17}.
\par

Our finite generation results for Drinfeld doubles apply to other Hopf algebras thanks to various general properties of the Drinfeld
double construction.  For example, $\rep(\D A)\cong \rep(\D (A^\ast))$; and for any cocycle twist $\sigma$ (see~\cite{montgomery04}),
$\rep(\D A)\cong \rep(\D (A_\sigma))$~\cite{majidoeckl99,benkartetal10}.
\par

Let us now fix $k$ a field of finite characteristic $p$.  We assume additionally that $p$ is odd, although most of our results will still hold when $p=2$ (see Section~\ref{sect:2}).  Recall that the $r$-th Frobenius kernel $\G_{(r)}$ is the group scheme theoretic kernel of the $r$-th Frobenius map $F^r:\G\to \G^{(r)}$ (see Section~\ref{sect:frob}).  We refer the reader to \cite{sullivan78,cline87,jantzen07} for some discussion of the important role Frobenius kernels play in the modular representation theory of algebraic groups.

We prove the following:

\begin{theoremA}[\ref{thm:fingen},\ \ref{thm:fingen2}]
Consider the $r$-th Frobenius kernel $\G_{(r)}$ of a smooth algebraic group $\G$.  The cohomology of the double $H^\b(\D\G_{(r)},k)$ 
is a finitely generated $k$ algebra.  Moreover, for any finite dimensional $\D\G_{(r)}$-module $M$, the cohomology $H^\b(\D\G_{(r)},M)$ 
is a finitely generated $H^\b(\D\G_{(r)},k)$-module.
\end{theoremA}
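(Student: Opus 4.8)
The plan is to derive both assertions from one structural fact: the existence of a \emph{finite} morphism of $k$-algebras $\theta_r\colon H^\b(\G_{(r)},k)\ot S(\g)\to H^\b(\D\G_{(r)},k)$. Its source is a finitely generated, hence Noetherian, $k$-algebra: $H^\b(\G_{(r)},k)$ is finitely generated by the finite generation theorem for cocommutative Hopf algebras \cite{friedlandersuslin97}, and $S(\g)$ is a polynomial ring since $\dim\g<\infty$. Granting $\theta_r$, a ring finite over a finitely generated $k$-algebra is again finitely generated, which gives the first statement; and $H^\b(\D\G_{(r)},M)$ is finitely generated over $H^\b(\D\G_{(r)},k)$ precisely when it is finitely generated over $\mathrm{im}(\theta_r)$, equivalently --- by finiteness of $\theta_r$ --- over $H^\b(\G_{(r)},k)\ot S(\g)$, so the module statement follows once $\theta_r$ is shown to be finite with module coefficients available as well.

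The main tool is the Hopf algebra extension underlying the smash product. As $\D\G_{(r)}=\O(\G_{(r)})\#k\G_{(r)}$, the subalgebra $\O(\G_{(r)})$ is a commutative \emph{normal} Hopf subalgebra with quotient $\D\G_{(r)}/\O(\G_{(r)})^{+}\D\G_{(r)}\cong k\G_{(r)}$, and this extension splits: the Hopf subalgebra inclusion $k\G_{(r)}=1\#k\G_{(r)}\hookrightarrow\D\G_{(r)}$ retracts the quotient map. I would record the associated multiplicative Lyndon--Hochschild--Serre type spectral sequence
\[
E_2^{s,t}(M)=H^s\!\big(\G_{(r)},\,H^t(\O(\G_{(r)}),M)\big)\ \Longrightarrow\ H^{s+t}(\D\G_{(r)},M),
\]
for a finite dimensional $\D\G_{(r)}$-module $M$ (with $M=k$ for the algebra statement), the coefficients carrying their natural $\G_{(r)}$-action. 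The splitting forces the edge map $H^\b(\G_{(r)},k)=E_2^{\b,0}\to H^\b(\D\G_{(r)},k)$ to be split injective --- this supplies the $H^\b(\G_{(r)},k)$-factor of $\theta_r$, and $E_2^{\b,0}$ consists of permanent cycles.

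Next I would pin down the coefficient cohomology, where the smoothness of $\G$ enters essentially. Writing $n=\dim\G$, one has $\O(\G_{(r)})\cong k[x_1,\dots,x_n]/(x_1^{p^r},\dots,x_n^{p^r})$ as a $k$-algebra, hence (for $p$ odd) $H^\b(\O(\G_{(r)}),k)\cong\Lambda(\xi_1,\dots,\xi_n)\ot k[\eta_1,\dots,\eta_n]$ with $|\xi_i|=1$, $|\eta_i|=2$. This is a Noetherian ring; its reduced quotient is the polynomial algebra $k[\eta_1,\dots,\eta_n]$, which one identifies with $S(\g)$ (the $\eta_i$ giving, up to Frobenius twist, a basis of $\g$ placed in degree $2$); and it is module-finite over the central polynomial subalgebra $R_a:=k[\eta_1^{p^a},\dots,\eta_n^{p^a}]$. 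Since an $r$-fold Frobenius twist is acted on trivially by $\G_{(r)}$, for $a$ large $R_a$ is a \emph{trivial} $\G_{(r)}$-module-algebra, so $H^\b(\G_{(r)},R_a)\cong H^\b(\G_{(r)},k)\ot R_a$ is finitely generated; and since $H^\b(\O(\G_{(r)}),M)$ is a finitely generated $\G_{(r)}$-equivariant $R_a$-module, a d\'evissage from the finite dimensional case of \cite{friedlandersuslin97} (in the manner of the Evens--Venkov theorem) shows that $\bigoplus_{s,t}E_2^{s,t}(k)$ is a Noetherian $k$-algebra and $\bigoplus_{s,t}E_2^{s,t}(M)$ a Noetherian module over it.

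It remains to build the $S(\g)$-factor of $\theta_r$ and pass from $E_2$ to the abutment. One lifts suitably high $p$-power multiples of the invariant classes $\eta_i\in E_2^{0,2}(k)$ to genuine classes in $H^\b(\D\G_{(r)},k)$; together with the inflation classes these generate a finitely generated subalgebra $C\subseteq H^\b(\D\G_{(r)},k)$, and since the spectral sequence filtration has associated graded a subquotient of the Noetherian ring $E_2(k)$, one concludes that $H^\b(\D\G_{(r)},k)$ is module-finite over $C$, hence over $H^\b(\G_{(r)},k)\ot S(\g)$; feeding in the Noetherianity of $E_2(M)$ over $E_2(k)$ gives the module statement by the same filtration comparison. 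The crux --- and the part I expect to be genuinely hard --- is the production of those permanent cycles: the spectral sequence has no vanishing line, so the Leibniz rule and $p$-th powers do \emph{not} by themselves force enough classes to survive, and an explicit construction is needed, e.g.\ writing down cocycles on a twisted-tensor-product (Koszul-type) resolution of $k$ over $\O(\G_{(r)})\#k\G_{(r)}$, or transporting classes along a comparison with a May-type spectral sequence as in \cite{friedlanderparshall86,friedlandersuslin97}. This step is also where one would expect to read off the finer facts --- injectivity of $\theta_r$ and the isomorphism of associated reduced schemes --- in the favorable cases.
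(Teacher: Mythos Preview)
Your spectral sequence framework is exactly the paper's, and you have correctly located the crux: producing enough permanent cycles in $E_2^{0,*}$ over which $E_2$ is finite. But this is precisely the step you leave open, and your proposed workarounds (explicit cocycles on a twisted Koszul resolution, or a May-type comparison) are not the mechanism the paper uses and are not evidently going to succeed. The paper's key idea---which you are missing---is \emph{deformation-theoretic}: one views $\O(\G_{(r+1)})$ as a flat deformation of $\O(\G_{(r)})$ parametrized by $\G_{(r+1)}/\G_{(r)}\cong\G_{(1)}^{(r)}$, and observes that the smash product $\O(\G_{(r+1)})\# k\G_{(r)}$ is then a deformation of $\D\G_{(r)}$ over the same base. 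Gerstenhaber's correspondence gives a linear map $\sigma_\D:\g^{(r)}=T_1\G_{(1)}^{(r)}\to H^2(\D\G_{(r)},k)$, and by construction its composite with restriction to $H^2(\O(\G_{(r)}),k)$ is the analogous deformation map $\sigma_\O$ for $\O$. One checks (by a Bockstein-type calculation for $\G_a$ and then by choosing coordinates for general smooth $\G$) that $\sigma_\O$ embeds $\g^{(r)}$ as a complement to $\wedge^2\g$ in $H^2(\O,k)$, so the image of $\sigma_\D$ consists of genuine cohomology classes which hit the polynomial generators on the $E_2$-page; these are therefore permanent cycles, and Friedlander--Suslin's Lemma~1.6 finishes the argument. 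This is also why no passage to high $p$-th powers is needed: the deformation construction already shows that $\sigma_\O(\g^{(r)})\subset H^2(\O,k)$ is $\G_{(r)}$-invariant (indeed trivially acted upon, being a Frobenius twist), so the $E_2$-page splits as $H^*(\G_{(r)},\Gamma)\otimes S(\g^{(r)}[2])$ with $\Gamma=\wedge^*(\g)$ finite-dimensional.

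One further point on the module statement: your proposed direct spectral-sequence argument for arbitrary finite-dimensional $M$ would require knowing that $H^*(\O(\G_{(r)}),M)$ is a finitely generated $\G_{(r)}$-equivariant module over the trivial polynomial subalgebra, and then a version of \cite{friedlandersuslin97} with such coefficients. The paper instead proves the module statement first only for $M$ with trivial $\O(\G_{(r)})$-restriction (where $H^*(\O,M)\cong H^*(\O,k)\otimes M$ and the argument goes through verbatim), observes that every irreducible $\D\G_{(r)}$-module has this property since the augmentation ideal of $\O(\G_{(r)})$ generates a nilpotent ideal in $\D\G_{(r)}$, and then handles general $M$ by a standard d\'evissage on composition length using Noetherianity of $H^*(\D\G_{(r)},k)$.
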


Our approach utilizes associations between deformation theory and Hopf cohomology.  We show that the deformation 
$\G_{(r+1)}$ of $\G_{(r)}$ produces a natural map  $\sigma_\O:\g^{(r)}\to H^2(\O(\G_{(r)}),k)$, where $\g=\mathrm{Lie}(\G)$.  
The map $\sigma_\O$ has a natural lift to the cohomology of the double $\sigma_\D:\g^{(r)}\to H^2(\D\G_{(r)},k)$, which is 
again constructed in a deformation theoretic manner.  
The smoothness hypothesis of $\G$ plays an important role in our proof.  Namely, we require an inductive argument passing 
from a finite group scheme of height $r$ to one of height $r+1$ which uses in an essential way the structure of $\bG_{(r+1)}$ as a flat extension of $\bG_{(r+1)}/\bG_{(r)}$ to obtain cohomology classes via deformation theory.
\par

 In proving the above theorem, we construct a finite algebra map
\[
\theta_r:H^\ast(\G_{(r)},k)\ot S(\g^{(r)}[2])\to H^\ast(\D\G_{(r)},k)
\]
(using $\sigma_\D$ and the inflation $H^\ast(\G_{(r)},k)\to H^\ast(\D\G_{(r)},k)$) with associated map of reduced affine schemes
\begin{equation}\label{eq:1}
\Theta_r: |\D\G_{(r)}|\to |k\G_{(r)}|\times (\g^\ast)^{(r)}
\end{equation}
(see Theorem~\ref{thm:fingen}).   Here, and elsewhere, we employ the usual notation $S(V[n])$ for the the symmetric algebra over $k$ of 
the $k$-vector space $V$ placed in degree $n$, and we use the notation $|A|=\Spec\!\ H^{ev}(A,k)_{\mathrm{red}}$ for the reduced spectrum of the cohomology of a Hopf algebra $A$. 
\par

For many classical algebraic groups $\G$ we are able to deduce additional information concerning $\Theta_r$ as formulated in
the following theorem. 

\begin{theoremA}[\ref{cor:|class|}]
Let $\G$ be a general linear group, simple algebraic group, Borel subgroup in a simple algebraic group, or a unipotent subgroup in a semisimple algebraic group which is normalized by a maximal torus.  Suppose that $p$ is very good for $\G$, or that $p>cl(\G)$ in the unipotent case.
\begin{itemize}
\item If $p>\dim\G+1$ then the map $\Theta_r$ of ~\eqref{eq:1} is an isomorphism for all $r$.
\item For arbitrary $p$, the map $\Theta_r$ is an isomorphism whenever $r$ is such that $p^r>2\dim\G$.
\end{itemize}
\end{theoremA}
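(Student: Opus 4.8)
The plan is to realise $\theta_r$ as the subalgebra generated by the edge terms of a Lyndon--Hochschild--Serre spectral sequence, and then to show that for the groups in the list, with the stated bounds on $p$, these edge terms already account for $H^\b(\D\G_{(r)},k)$ up to nilpotents. Since $\theta_r$ is a \emph{finite} algebra map (Theorem~\ref{thm:fingen}), $\Theta_r$ is a finite morphism, so it is an isomorphism of reduced schemes precisely when $\theta_r$ is injective modulo nilpotents and $H^\b(\D\G_{(r)},k)$ is generated modulo nilpotents by $\mathrm{im}(\theta_r)$. Now $\O(\G_{(r)})$ is a normal Hopf subalgebra of $\D\G_{(r)}$ with quotient $k\G_{(r)}$, and $\D\G_{(r)}$ is faithfully flat over it, so there is a multiplicative spectral sequence
\[
E_2^{i,j}=H^i\!\big(k\G_{(r)},\,H^j(\O(\G_{(r)}),k)\big)\ \Longrightarrow\ H^{i+j}(\D\G_{(r)},k),
\]
in which the inflation $H^\b(\G_{(r)},k)\to H^\b(\D\G_{(r)},k)$ is the edge map along the row $j=0$, and the classes $\sigma_\D(\g^{(r)})$ sit in the column $i=0$ --- they lift $\sigma_\O(\g^{(r)})\subseteq E_2^{0,2}$, hence are permanent cycles there. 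Thus $\mathrm{im}(\theta_r)$ is, up to the spectral sequence filtration, the subalgebra of $E_\infty$ generated by the bottom row and these column-zero polynomial classes.

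\textbf{Identifying the $E_2$-page.} For $\G$ smooth, $\O(\G_{(r)})$ is, as an algebra, a $d$-fold tensor power of $k[x]/(x^{p^r})$ with $d=\dim\G$, so (for $p$ odd) $H^\b(\O(\G_{(r)}),k)\cong\Lambda(\g^\ast[1])\ot S(\g^{(r)}[2])$; the $k\G_{(r)}$-action is the adjoint one on the degree-one part $\g^\ast$ and --- crucially --- the \emph{trivial} action on $S(\g^{(r)}[2])$, since this is a Frobenius twist. Therefore
\[
E_2\ \cong\ H^\b\!\big(k\G_{(r)},\,\Lambda(\g^\ast[1])\big)\ \ot\ S(\g^{(r)}[2]).
\]
On the two axes this is already what we want: along $j=0$ it is $H^\b(\G_{(r)},k)$, matching the factor $|k\G_{(r)}|$ of the target; and along $i=0$ the ideal $\Lambda^{>0}(\g^\ast)$ is nilpotent, so the column has reduced ring $S(\g^{(r)})$, matching the factor $(\g^\ast)^{(r)}$. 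This also gives injectivity of $\theta_r$ modulo nilpotents: the inflation is split (by the Hopf subalgebra inclusion $k\G_{(r)}\hookrightarrow\D\G_{(r)}$), the $\sigma_\D(\g^{(r)})$ restrict along $\O(\G_{(r)})\hookrightarrow\D\G_{(r)}$ to the polynomial generators $\sigma_\O(\g^{(r)})$, and the inflated positive-degree classes restrict to zero there (the composite $\O(\G_{(r)})\hookrightarrow\D\G_{(r)}\twoheadrightarrow k\G_{(r)}$ is the counit), so the two families of classes are algebraically independent.

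\textbf{The main obstacle.} What remains, and is the heart of the corollary, is to show that for $p$ large relative to $d=\dim\G$ no term $E_r^{i,j}$ with $i,j>0$ survives to contribute a new polynomial generator of $H^\b(\D\G_{(r)},k)$: a priori a class in $H^{2m}\!\big(\G_{(r)},\Lambda^{2n}\g^\ast\big)$ lies in even total degree and could do so. The plan is to kill all such terms using (a) the transgression-type differentials of the spectral sequence, which pair the degree-one exterior generators against positive-degree classes of $H^\b(\G_{(r)},k)$, together with (b) the explicit Suslin--Friedlander--Bendel-type computations of $H^\b(\G_{(r)},k)$ that are available exactly for $\G$ a general linear group, a simple group, a Borel, or a torus-normalized unipotent group, in which the polynomial generators occur in a controlled range of degrees. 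The ``very good prime'' hypothesis enters here: it gives the nondegenerate invariant form identifying $\g$ with $\g^\ast$ and ensures the good behaviour of $\O(\G_{(r)})$ and of the nullcone inputs to those computations; in the unipotent case $p>cl(\G)$ plays the same role, via the exponential isomorphism $\G\cong\g$, making the coefficient modules $\Lambda^\b(\g^\ast)$ transparent. The numerical hypotheses supply precisely the degree separation needed: when $p>d+1$ (respectively when $p^r>2d$ at the level $r$ in question) the exterior algebra $\Lambda^\b(\g^\ast)$ --- together with the part of $H^\b(\G_{(r)},k)$ of low enough degree to multiply it into even total degree --- lies below the degrees at which the relevant polynomial generators of $H^\b(\G_{(r)},k)$ appear, so every such even off-axis class is forced into the nilpotent ideal generated by $\Lambda^{>0}(\g^\ast)$ rather than being a generator. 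I expect part (b) --- carrying out this analysis uniformly over the four families and verifying that no differential and no multiplicative extension in the abutment spoils it --- to be the delicate step.

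\textbf{Conclusion.} Granting this, $E_\infty$ is generated modulo nilpotents by its bottom row $H^\b(\G_{(r)},k)$ and its column-zero classes $S(\g^{(r)})$, i.e.\ by the associated graded of $\mathrm{im}(\theta_r)$; since the filtration on $H^\b(\D\G_{(r)},k)$ is finite, $\theta_r$ is surjective modulo nilpotents. Combined with the injectivity above, $\theta_r$ induces an isomorphism of reduced rings, so $\Theta_r$ is an isomorphism --- for all $r$ when $p>\dim\G+1$, and at each level $r$ with $p^r>2\dim\G$ for arbitrary $p$.
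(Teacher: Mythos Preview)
Your spectral sequence setup and identification of the $E_2$-page are correct, and so is your observation that the image of $\theta_r$ corresponds to the subalgebra generated by the bottom row and the permanent cycles $S(\g^{(r)})$. But your plan for the ``main obstacle'' misidentifies both the mechanism and the role of the hypotheses, and would not go through as stated.

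The paper does \emph{not} analyze differentials, does not invoke any Suslin--Friedlander--Bendel computation of $H^\ast(\G_{(r)},k)$, and does not use the very good prime hypothesis to get a Killing-form identification $\g\cong\g^\ast$. The hypotheses on the group and on $p$ are used for one purpose only: to guarantee that $\G$ admits a \emph{quasi-logarithm} $L:\G\to\g$ (Propositions~\ref{prop:ql1},~\ref{prop:ql3}, Corollary~\ref{cor:ql2}). From $L$ one obtains an algebra isomorphism $\D\G_{(r)}\cong (S(\g^\ast)/I_r)\# k\G_{(r)}$ (Lemma~\ref{lem:grading}), which equips $\D\G_{(r)}$ with an \emph{internal $\mathbb{Z}$-grading} placing $k\G_{(r)}$ in degree $0$ and $\g^\ast$ in degree $1$. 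This grading passes to the spectral sequence, and Lemma~\ref{lem:HOdeg} pins down that $\g\subset H^1(\O,k)$ sits in internal degree $1$ while $\g^{(r)}\subset H^2(\O,k)$ sits in internal degree $p^r$. The numerical hypotheses then enter as a pure grading argument: reducing the internal grading mod $p^r$ (Theorem~\ref{thm:|log|2}) or mod $p$ (Theorem~\ref{thm:|log|3}), the image of $\theta_r$ is exactly the degree-$0$ piece, and $p^r>2\dim\G$ (resp.\ $p>\dim\G+1$) forces a \emph{gap}---the degrees $\dim\G+1,\ldots,p^r-1$ (resp.\ degree $p-1$) vanish on $E_2$---so every homogeneous element of nonzero internal degree has a power landing in the gap, hence is nilpotent. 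No knowledge of differentials or of $H^\ast(\G_{(r)},k)$ beyond Friedlander--Suslin finiteness is needed.

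Your proposed route---showing off-axis classes lie in the ideal generated by $\Lambda^{>0}(\g^\ast)$---does establish nilpotence in $E_2$, but that holds for \emph{any} smooth $\G$ with no bound on $p$ or $r$, so it cannot by itself explain why the numerical hypotheses are needed. The real difficulty you have not addressed is passing from nilpotence in $E_\infty$ to the statement that $\theta_r$ is surjective modulo nilpotents in the abutment: a class $x\in H^\ast(\D\G_{(r)},k)$ whose leading term in $E_\infty$ is nilpotent need not itself be nilpotent, since $x^N$ drops into higher filtration where non-nilpotent classes in $\mathrm{im}(\theta_r)$ live. The internal grading resolves this cleanly because it is a grading on the abutment itself, not merely on the associated graded.
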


The key observation we use in proving the above theorem is that the hypotheses guarantee the existence of a quasi-logarithm $L: \G \to \g$
\cite{kazhdanvarshavsky06}.
This leads to a grading on the Drinfeld double $\D\G_{(r)}$ which much simplifies the analysis of the LHS spectral sequence 
we use to investigate the cohomology of $\mrm{D}\G_{(r)}$.
The ``very good" condition on $p$ is a mild condition which we review in Section~\ref{sect:qlog}.  In the unipotent case, the integer $cl(\G)$ is the nilpotence class of $\G$, which is always less than $\dim(\G)$.  The theorem implies an equality of dimensions $\dim|\D\G_{(r)}|=\dim|k\G_{(r)}|+\dim\G$ for such classical groups.
\par

We also consider the support variety $|\D\G_{(r)}|_M$ associated to a $\D\G_{(r)}$-module $M$.  The support variety for $M$ is defined as the closed, reduced, subscheme in $|\D\G_{(r)}|$ defined by the kernel of the algebra map 
\[
-\ot M:H^{ev}(\D\G_{(r)},k)\to \Ext^{ev}_{\D\G_{(r)}}(M,M).
\]

\begin{theoremA}[\ref{cor:|class|V}]
Suppose $\G$ is as in the statement of the previous theorem.  If $p>\dim\G+1$ or $p^r>2\dim\G$, 
then for any  irreducible $\D\G_{(r)}$-module $M$  the map $\Theta_r$ of ~\eqref{eq:1} restricts to an isomorphism of schemes
\[
\Theta_{r,M}: |\D\G_{(r)}|_M\ \stackrel{\sim}{\to} \ |k\G_{(r)}|_M\times (\g^\ast)^{(r)}.
\]
\end{theoremA}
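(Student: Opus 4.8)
The plan is to exploit the isomorphism $\Theta_r$ of Theorem~\ref{cor:|class|} and to reduce the computation of the support $|\D\G_{(r)}|_M$ to a computation over the subalgebra $H^\ast(\G_{(r)},k)\ot S(\g^{(r)}[2])$ along the finite map $\theta_r$. Write $\pi\colon\D\G_{(r)}\to\D\G_{(r)}/\O(\G_{(r)})^+\D\G_{(r)}=k\G_{(r)}$ for the Hopf quotient, so that $\pi^\ast$ is the inflation appearing in the definition of $\theta_r$. \emph{First I reduce to inflated modules.} Since $\G_{(r)}$ is infinitesimal, $\O(\G_{(r)})$ is a finite-dimensional local $k$-algebra with nilpotent augmentation ideal, and since $\O(\G_{(r)})$ is a normal Hopf subalgebra of $\D\G_{(r)}$ the subspace $\O(\G_{(r)})^+\cdot M$ is a $\D\G_{(r)}$-submodule of $M$, proper by Nakayama. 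Hence for irreducible $M$ one has $\O(\G_{(r)})^+\cdot M=0$, so $M=\pi^\ast\bar M$ for an irreducible $k\G_{(r)}$-module $\bar M$; in particular $|k\G_{(r)}|_M=|k\G_{(r)}|_{\bar M}$ (this is the meaning of ``$M$ viewed as a $\G_{(r)}$-module''), and $\End_k(M)\cong\pi^\ast\End_k(\bar M)$ as $\D\G_{(r)}$-modules.

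\emph{The containment $\Theta_r(|\D\G_{(r)}|_M)\subseteq|k\G_{(r)}|_{\bar M}\times(\g^\ast)^{(r)}$ is formal.} By Theorem~\ref{cor:|class|}, $\Theta_r$ is an isomorphism, so it suffices to locate $\Theta_r(|\D\G_{(r)}|_M)$ inside the product. Naturality of the action map $-\ot(-)$ under $\pi$ gives a commuting square relating $-\ot\bar M$ on $H^{ev}(\G_{(r)},k)$ and $-\ot M$ on $H^{ev}(\D\G_{(r)},k)$ through the inflation maps; thus $\pi^\ast$ sends the defining ideal of $|k\G_{(r)}|_{\bar M}$ into that of $|\D\G_{(r)}|_M$, equivalently the first projection of $\Theta_r$ carries $|\D\G_{(r)}|_M$ into $|k\G_{(r)}|_{\bar M}$. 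The second coordinate automatically lies in $(\g^\ast)^{(r)}$.

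\emph{The reverse containment is the heart of the argument.} I would establish it by re-running the Lyndon--Hochschild--Serre analysis used for Theorem~\ref{cor:|class|}, now for the extension $\O(\G_{(r)})\lhd\D\G_{(r)}$ with coefficients in $\End_k(M)$. Because $\O(\G_{(r)})$ acts trivially on $M$, the $E_2$-page is $H^s\big(\G_{(r)},\,H^t(\O(\G_{(r)}),k)\ot\End_k(\bar M)\big)$. The nilradical of $H^\ast(\O(\G_{(r)}),k)$ is a $\G_{(r)}$-stable ideal, and the quotient is the polynomial algebra $S(\g^{(r)}[2])$ on which $\G_{(r)}$ acts trivially (here one uses that $\G_{(r)}$ acts trivially on $\g^{(r)}$, the adjoint action on $\g^{(r)}$ factoring through the $r$-th Frobenius). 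The quasi-logarithm grading on $\D\G_{(r)}$, available exactly under the present hypotheses on $\G$ and $p$ as in Theorem~\ref{cor:|class|}, forces the spectral sequence to degenerate modulo this nilradical and yields, up to nilpotents and compatibly with the $H^{ev}(\D\G_{(r)},k)$-module structure, an identification $\Ext^\ast_{\D\G_{(r)}}(M,M)\approx\Ext^\ast_{\G_{(r)}}(\bar M,\bar M)\ot S(\g^{(r)}[2])$, with $\pi^\ast H^\ast(\G_{(r)},k)$ acting through the first factor and $\sigma_\D(S(\g^{(r)}[2]))$ through the second (the latter since the restriction of $\sigma_\D$ along $\O(\G_{(r)})\hookrightarrow\D\G_{(r)}$ is $\sigma_\O$, which acts by multiplication on $H^\ast(\O(\G_{(r)}),k)$). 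The annihilator of the right-hand module has radical $\sqrt{J}\ot S(\g^{(r)}[2])$, where $J=\ker\big(-\ot\bar M\colon H^{ev}(\G_{(r)},k)\to\Ext^{ev}_{\G_{(r)}}(\bar M,\bar M)\big)$, and passing to reduced spectra gives $\Theta_r(|\D\G_{(r)}|_M)=|k\G_{(r)}|_{\bar M}\times(\g^\ast)^{(r)}$, i.e.\ the asserted isomorphism $\Theta_{r,M}$.

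\emph{The main obstacle} is the spectral-sequence collapse in the last step with the nontrivial coefficient module $\End_k(\bar M)$, together with the verification that the contribution of the nilradical of $H^\ast(\O(\G_{(r)}),k)$ — on which $\G_{(r)}$ genuinely acts nontrivially — does not enlarge the support. I would handle the latter exactly as in the $M=k$ case of Theorem~\ref{cor:|class|}: the relevant subquotient of $\Ext^\ast_{\D\G_{(r)}}(M,M)$ is killed by classes of $H^{ev}(\D\G_{(r)},k)$ that are themselves nilpotent, hence is invisible to the reduced support variety. A secondary point, requiring care but no new ideas, is keeping track of the $H^{ev}(\D\G_{(r)},k)$-module structure through the grading argument so that the two tensor factors of $\Ext^\ast_{\D\G_{(r)}}(M,M)$ are acted on independently by $\pi^\ast H^{ev}(\G_{(r)},k)$ and by $\sigma_\D(S(\g^{(r)}[2]))$.
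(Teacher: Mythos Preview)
Your approach is essentially that of the paper: trivial $\O(\G_{(r)})$-action on irreducibles (Proposition~\ref{prop:simpmod}), one containment via naturality of inflation (Lemma~\ref{lem:suppsq} and Proposition~\ref{prop:contain}), and the reverse via the LHS spectral sequence under the quasi-logarithm grading. The paper's organization is a bit sharper and dissolves what you flag as the ``main obstacle'': rather than seeking an identification of $\Ext^*_{\D\G_{(r)}}(M,M)$ up to nilpotents, it proves only that the map
\[
\theta_{r,V}\colon H^{ev}(\G_{(r)},V\otimes V^*)\otimes S(\g^{(r)}[2])\ \to\ H^{ev}(\D\G_{(r)},V\otimes V^*)
\]
is \emph{injective} (Lemma~\ref{lem:0Minj}, by the same internal-grading argument as Theorem~\ref{thm:|log|} run with coefficients $V\otimes V^*$, needing only $p^r>\dim\G$), and then the factorization $f_{r,V}=\theta_{r,V}\circ(\mathrm{coev}^V_*\otimes id_S)$ gives $\ker(f_{r,V})=\ker(\mathrm{coev}^V_*)\otimes S(\g^{(r)})$ on the nose, so that $\Theta_{r,V}$ is surjective without any nilradical bookkeeping; the stronger hypotheses on $p$ or $r$ enter only through $\Theta_r$ itself being an isomorphism.
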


We supplement the preceding results by extending many of them to relative Drinfeld
doubles (see Section~\ref{sect:relative}).

\subsection*{Organization}

In Section~\ref{sect:defo}, we discuss associations between deformations and Hopf cohomology, and produce the aforementioned 
maps $\sigma_\O$ and $\sigma_\D$.  In Section~\ref{sect:proof} we prove that the algebra map 
$S(\g^{(r)}[2])\to H^\ast(\O(\G_{(r)}),k)_{\mathrm{red}}$ induced by $\sigma_\O$ is an isomorphism.  
We use the lifting $\sigma_\D$, in conjunction with the inflation map $H^\ast(\G_{(r)},k)\to H^\ast(\D\G_{(r)},k)$, 
to establish the finite generation of cohomology for the double $D(\G_{(r)})$ in Section~\ref{sect:fg}.  Section~\ref{sect:qlog} is 
dedicated to an analysis of classical groups at large primes.  Section~\ref{sect:supp} is dedicated to support varieties.

\subsection*{Acknowledgments}

Thanks to Roman Bezrukavnikov, Robert Guralnick, Julia Pevtsova, Julia Plavnik, and Sarah Witherspoon for helpful conversations.  We are particularly grateful to the referee for detailed, constructive comments.

\section{Finite group schemes and the Frobenius}
\label{sect:frob}

We fix from this point on $k$ a field of finite characteristic $p$.  We assume $p\neq 2$ (see Section~\ref{sect:2}).  A ``scheme" is a scheme of finite type over $k$, and $\ot=\ot_k$.  All schemes considered in this work will be affine.
\par

For an affine group scheme $G$, a rational (left) $G$-representation is a 
(right) comodule over the coordinate algebra $\O(G)$.  A $G$-algebra is a $\O(G)$-comodule algebra (i.e., an algebra $R$ which is a rational $G$-representation in such a way that the multiplication $R\ot R\to R$ is a map of $G$-representations).  We let $H^\ast(G,M)$ denote the rational group cohomology of $G$ with coefficients in $M$.  If $G$ is a finite group scheme with Hopf algebra $kG$ (the ``group algebra"
of $G$), then $H^\ast(G,M) = H^\ast(kG,M)$.
\par

In this section we review some standard information on Frobenius maps and Frobenius kernels.  One can see Jantzen's book~\cite{jantzen07} for a more detailed presentation.

\subsection{Frobenius maps and Frobenius kernels}

Let $\phi^r:k \to k$ be the  $p^r$-th power map on $k$, $\lambda \mapsto \lambda^{p^r}$.
\par

Given an affine group scheme $G$ we let $G^{(r)}$ denote the fiber product of $G$ with $\Spec(k)$ along $\phi^r$,
\[
\xymatrix{
G^{(r)}\ar[r]\ar[d] & G\ar[d]\\
\Spec(k)\ar[r]^{(\phi^r)^\ast} & \Spec(k).
}
\]
By functoriality of the pullback we see that $(-)^{(r)}$ provides a functor on the category of group schemes over $k$.  There is a natural map of group schemes $F^r:G\to G^{(r)}$ given explicitly as follows: $\O(G^{(r)}) = \O(G) \otimes_{\phi^r} k \to \O(G)$ sends $f \otimes_{\phi^r} 
\lambda$ to $\lambda \cdot f^{p^r}$.

\begin{definition}
\begin{enumerate}
\item[i)] The above map $F^r:G\to G^{(r)}$ is called the $r$-th (relative) Frobenius map.
\item[ii)] The $r$-th Frobenius kernel $G_{(r)}$ of $G$ is the group scheme theoretic kernel of the $r$-th Frobenius map, $1\to G_{(r)}\to G\overset{F^r}\to G^{(r)}$.
\item[iii)] We say $G$ is of height $\leq r$ if $G=G_{(r)}$.
\end{enumerate}
\end{definition}

The closed subgroup scheme $G_{(r)}$ in $G$ is the spectrum of the quotient Hopf algebra
\[
\O(G_{(r)})= \O(G)/(f^{p^r}:f\in m_G),
\]
where $m_G$ is the maximal ideal corresponding to the identity in $G$.  Whence we see that an affine group scheme $G$ is of height $\leq r$ if and only if $f^{p^r}=0$ for each $f\in m_G$.

\begin{example}
For $G$ a height $1$ group scheme, we have $G=\Spec(\u(\g)^\ast)$ where $\g$ is the restricted Lie algebra for $G$ and $\u(\g)$ is the restricted enveloping algebra.  This association gives a natural bijection between height $1$ group schemes and finite dimensional restricted Lie algebras.
\end{example}
\begin{example}
Consider $\GL_n$.  This is the spectrum of the Hopf algebra 
\[
\O(\GL_n)=k[x_{ij},\ \! {\det}^{-1}:1\leq i,j\leq n]
\]
with comultiplication $\Delta(x_{ij})=\sum_{k=1}^n x_{ik}\ot x_{kj}$, counit $\epsilon(x_{ij})=\delta_{ij}$, and antipode given by the adjoint formula for the inverse of a matrix.  The Frobenius kernels in this case are given by
\[
\O(\GL_{n(r)})=k[x_{ij}:1\leq i,j\leq n]/(x_{ij}^{p^r}-\delta_{ij}).
\]
Note that in the above presentation of the Frobenius kernel the determinant is already invertible.
\end{example}

\subsection{Frobenius twists of representations}
\label{sect:frob2}

For a rational $G$-representation $V$ we let $V^{(r)}$ denote the new $G$-representation which is the vector space $k\ot_{\phi^r} V$ along with the $G$-action given by the composite
\[
G\overset{F^r}\longrightarrow G^{(r)}\to \GL(V)^{(r)}=\GL(V^{(r)}).
\]
The tensor product $k\ot_{\phi^r}$ here denotes base change along $\phi^r$.  As a comodule, $V^{(r)}$ has right $\O(G)$-coaction given by
\[
\rho^{(r)}(c\ot v)=\sum_i (c\ot v_{i_0})\ot v_{i_1}^{p^r},
\]
where the initial coaction of $\O(G)$ on $V$ is given by $\rho(v)=\sum_i v_{i_0}\ot v_{i_1}$.  (In the above equation $c\in k$ and $v\in V$.)  We call $V^{(r)}$ the $r$-th Frobenius twist of $V$.  The proof of the following lemma is immediate from the observation that the composition $\O(G^{(r)}) \to \O(G) \to \O(G_{(r)})$ factors through the counit for $\O(G^{(r)})$.

\begin{lemma}
For $G$ of height $\leq r$, and $V$ any rational $G$-representation, $G$ acts trivially on the $r$-th Frobenius twist $V^{(r)}$.
\end{lemma}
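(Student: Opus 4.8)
The statement to prove is that for $G$ of height $\leq r$ and any rational $G$-representation $V$, the group $G$ acts trivially on the $r$-th Frobenius twist $V^{(r)}$.

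The plan is to unwind the comodule definition given just above the lemma. The claim is really about the coaction $\rho^{(r)}$ on $V^{(r)} = k \ot_{\phi^r} V$, and triviality of the action means that $\rho^{(r)}(c \ot v) = (c \ot v) \ot 1$ for all $c \in k$, $v \in V$; equivalently, the coaction factors through $V^{(r)} \ot \epsilon$, where $\epsilon: \O(G) \to k$ is the counit.

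First I would recall the construction of $V^{(r)}$ via base change along the Frobenius: the coaction $\rho^{(r)}$ on $V^{(r)}$ is obtained from the original coaction $\rho: V \to V \ot \O(G)$ by first twisting and then applying the canonical map $\O(G^{(r)}) \to \O(G)$ appearing in the definition of the Frobenius $F^r$. Concretely, if $\rho(v) = \sum_i v_{i_0} \ot v_{i_1}$, then $\rho^{(r)}(c \ot v) = \sum_i (c \ot v_{i_0}) \ot v_{i_1}^{p^r}$ as written in the excerpt. The key point, as the excerpt already flags, is that the composite $\O(G^{(r)}) \to \O(G) \to \O(G_{(r)})$ factors through the counit of $\O(G^{(r)})$.

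So the main step is: since $G = G_{(r)}$ has height $\leq r$, we have $f^{p^r} = 0$ for every $f \in m_G$, where $m_G = \ker(\epsilon)$. Write $v_{i_1} = \epsilon(v_{i_1}) \cdot 1 + (v_{i_1} - \epsilon(v_{i_1})\cdot 1)$, decomposing $\O(G) = k \oplus m_G$. Raising to the $p^r$-th power and using that $k$ has characteristic $p$ (so the Frobenius on $\O(G)$ is a ring homomorphism and cross terms involving the ideal $m_G$ vanish once we reach the $p^r$-th power), we get $v_{i_1}^{p^r} = \epsilon(v_{i_1})^{p^r} \cdot 1$ in $\O(G)$. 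Therefore
\[
\rho^{(r)}(c \ot v) = \sum_i (c \ot v_{i_0}) \ot \epsilon(v_{i_1})^{p^r} = \Big(c \ot \sum_i \epsilon(v_{i_1})^{p^r} v_{i_0}\Big) \ot 1.
\]
By the counit axiom $\sum_i \epsilon(v_{i_1}) v_{i_0} = v$, and since $c \ot -$ is $\phi^r$-semilinear we have $\sum_i \epsilon(v_{i_1})^{p^r}(c \ot v_{i_0}) = c \ot \sum_i \epsilon(v_{i_1}) v_{i_0} = c \ot v$. Hence $\rho^{(r)}(c \ot v) = (c \ot v) \ot 1$, i.e., the coaction is trivial, which is exactly the assertion that $G$ acts trivially on $V^{(r)}$.

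The only mild subtlety — the ``hard'' part, though it is routine — is verifying that the characteristic-$p$ computation $v_{i_1}^{p^r} = \epsilon(v_{i_1})^{p^r}\cdot 1$ is legitimate: one must note that for $a \in k$ and $f \in m_G$, $(a \cdot 1 + f)^{p^r} = a^{p^r} \cdot 1 + f^{p^r}$ because all intermediate binomial coefficients $\binom{p^r}{j}$ with $0 < j < p^r$ are divisible by $p$ (or, more conceptually, that the $p^r$-th power map is a ring endomorphism of the commutative $k$-algebra $\O(G)$ that is additive mod $p$), and then invoke the height hypothesis $f^{p^r} = 0$. Everything else is a direct substitution into the formula for $\rho^{(r)}$ and an application of the counit axiom, so no spectral sequences or further machinery are needed.
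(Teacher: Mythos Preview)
Your proof is correct and is precisely an explicit unpacking of the paper's one-line argument: the observation that $\O(G^{(r)}) \to \O(G) \to \O(G_{(r)})$ factors through the counit amounts exactly to your computation that $v_{i_1}^{p^r} = \epsilon(v_{i_1})^{p^r}\cdot 1$ in $\O(G_{(r)})$, after which the counit axiom and the $\phi^r$-semilinearity of $c\ot -$ finish the job. There is no difference in approach, only in level of detail.
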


We also employ a natural isomorphism of $G$-representations $(V^\ast)^{(r)}\overset{\cong}\to (V^{(r)})^\ast$ given by the formula $c\ot f\mapsto (c'\ot v\mapsto cc'f(v)^{p^r})$.

\section{Deformations of Frobenius kernels and cohomology}
\label{sect:defo}

We fix a positive integer $r$ and consider the $r$-th Frobenius kernel $\bG_{(r)}$ of
a smooth linear algebraic group over $k$, a field of odd characteristic $p > 0$.  We denote by $\D\bG_{(r)}$ the Drinfeld double of 
the Hopf algebra $k\G_{(r)}$.  This is the smash product $\D\G_{(r)}=\O(\G_{(r)})\# k\G_{(r)}$ of the coordinate algebra $\O(\G_{(r)})$ with the group algebra $k\G_{(r)}$ with respect to
the right adjoint action of $\G_{(r)}$ on itself~\cite[Cor.\ 10.3.10]{montgomery93}.
\par

The adjoint action of $\G_{(r)}$ on itself corresponds specifically to the $\O(\G_{(r)})$-coaction $\rho(f)=\sum_if_{i_2}\ot S(f_{i_1})f_{i_3}$, and subsequent $k\G_{(r)}$-action $\xi\cdot f=\sum_if_{i_2}\xi(S(f_{i_1})f_{i_3})$.  The Hopf structure on $\D\G_{(r)}$ is the unique one so that the two inclusions $\O(\G_{(r)})^\mrm{cop}\to \D\G_{(r)}$ and $k\G_{(r)}\to \D\G_{(r)}$ are maps of Hopf algebras.
\par

We proceed to construct cohomology classes in $H^2(\D\G_{(r)},k)$ which will enable our proof of finite generation in Section~\ref{sect:fg}.  Our construction involves deformations of $\O(\G_{(r)})$ and $\D\bG_{(r)}$, in particular the embedding $\G_{(r)} \to \G_{(r+1)}$ which we view as a deformation of $\G_{(r)}$ parametrized by $\G_{(r+1)}/\G_{(r)}$.  This deformation leads to classes in the Hochschild cohomology group $\HH^2(\D\G_{(r)},k)$ 
and thereby classes in $H^2(\D\G_{(r)},k)$.

\subsection{Hochschild cohomology and deformations}

We recall that the Hochschild cohomology of an algebra $R$ with coefficients in an $R$-bimodule $M$ is defined as
\[
\HH^*(R,M) \ \equiv \ \Ext_{R\ot R^{op}}^*(R,M),
\]
and $\HH^\ast(R)=\HH^\ast(R,R)$.  Thus, $\HH^*(R,M)$ is functorial with respect to maps $M \to N$
of $R$-bimodules.  Moreover, we have the well-known surjection (see \cite[Sect.\ 5.6]{ginzburgkumar93} or~\cite[Sect.\ 7]{pevtsovawitherspoon09})
$$\HH^*(R) \ \cong H^*(R,R^{ad}) \twoheadrightarrow H^*(R,k) \equiv \Ext_R^*(k,k), \quad \text{if } \ R \ \text{is a Hopf algebra}$$ 
(using the fact that $k \to R^{ad}$ splits). 
We further recall that a (infinitesimal) deformation $\mathscr{R}$ of an algebra $R$ parametrized by a scheme $\Spec(A)$ is a flat $A$-algebra, 
where $A$ is an Artin local (commutative) algebra with residue field $k$, equipped with a fixed isomorphism 
$\mathscr{R}\ot_A k\overset{\cong}\to R$.
Given any map $A\to A'$ of such Artinian local algebras, and a deformation $\mathscr{R}$ parametrized by $\Spec(A)$, 
we can extend $\mathscr{R}$ along $A\to A'$ to get a deformation $\mathscr{R}\ot_A A'$ parametrized by $\Spec(A')$.  
Two deformations $\mathscr{R}$ and $\mathscr{R}'$ parametrized by $\Spec(A)$ are said to be isomorphic 
if there is an $A$-algebra isomorphism $l:\mathscr{R}\to \mathscr{R}'$ fitting into a diagram
\[
\xymatrix{
\mathscr{R}\ar[rr]^l\ar[dr] & & \mathscr{R}'\ar[dl]\\
  & R &.
}
\]
A special role is played by deformations parametrized by $\Spec(k[\varepsilon])$, where $k[\varepsilon] \equiv k[t]/t^2$ is the Artin local
algebra of ``dual numbers".

\begin{theorem}[{Gerstenhaber~\cite{gerstenhaber64}}]
\label{thm:Gerstenhaber}
There is a naturally constructed bijection
\begin{equation}\label{eq:bij}
\frac{\{\mathrm{deformations\ of\ }R\mathrm{\ parametrized\ by\ }\Spec(k[\varepsilon])\}}{\cong} \
\stackrel{\sim}{\longrightarrow} \ \HH^2(R).
\end{equation}
\end{theorem}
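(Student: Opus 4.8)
The plan is to unwind the definition of a deformation over the dual numbers $k[\varepsilon]$ and observe that the associativity constraint on the deformed product is literally the Hochschild $2$-cocycle condition, while isomorphism of deformations is literally the relation of being cohomologous in degree $2$. First I would record that since $k[\varepsilon]$ is an Artin local ring and $\mathscr{R}$ is a flat $k[\varepsilon]$-algebra, $\mathscr{R}$ is free as a $k[\varepsilon]$-module; choosing a $k$-linear splitting of the reduction $\mathscr{R}\to\mathscr{R}\ot_{k[\varepsilon]}k\cong R$ and extending $k[\varepsilon]$-linearly identifies $\mathscr{R}$ with $R\ot_k k[\varepsilon]=R\oplus R\varepsilon$ as a $k[\varepsilon]$-module, compatibly with the reduction to $R$. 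Under this identification the multiplication, being $k[\varepsilon]$-bilinear and a lift of the multiplication on $R$, must have the form $a\b b=ab+\mu(a,b)\varepsilon$ for a unique $k$-bilinear $\mu\colon R\times R\to R$ (using $\varepsilon^2=0$). Comparing coefficients of $\varepsilon$ in $(a\b b)\b c=a\b(b\b c)$ gives exactly $a\mu(b,c)-\mu(ab,c)+\mu(a,bc)-\mu(a,b)c=0$, i.e. $\delta\mu=0$ in the Hochschild cochain complex $C^\ast(R)$ with $C^n(R)=\Hom_k(R^{\ot n},R)$; conversely any $2$-cocycle $\mu$ produces via this same formula an associative (automatically flat, and automatically unital with unit $1\in R$ after the standard normalization $\mu(1,-)=\mu(-,1)=0$, which alters $\mu$ only by a coboundary) deformation of $R$. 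This already yields a surjection from deformations to $2$-cocycles, hence to $\HH^2(R)$.

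Next I would check that the induced map on isomorphism classes is well defined and injective. A morphism of deformations $l\colon\mathscr{R}\to\mathscr{R}'$ compatible with the projections to $R$ must, in the coordinates above, have the form $l(a)=a+\phi(a)\varepsilon$ for a $k$-linear $\phi\colon R\to R$; expanding $l(a\b b)=l(a)\b' l(b)$ and comparing $\varepsilon$-coefficients yields $\mu'(a,b)-\mu(a,b)=\phi(ab)-a\phi(b)-\phi(a)b=-(\delta\phi)(a,b)$. Hence isomorphic deformations have cohomologous cocycles, so $\mathscr{R}\mapsto[\mu]$ descends to a map on isomorphism classes; the same computation, applied to two different $k[\varepsilon]$-module splittings of a single $\mathscr{R}$, shows the class $[\mu]$ is independent of the splitting, which is what makes the assignment canonical (``naturally constructed''). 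Conversely, if $\mu'-\mu=-\delta\phi$, then $l(a)=a+\phi(a)\varepsilon$ is visibly a $k[\varepsilon]$-algebra isomorphism $\mathscr{R}\to\mathscr{R}'$ over $R$, so cohomologous cocycles give isomorphic deformations; together with surjectivity this produces the bijection~\eqref{eq:bij}.

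Since the argument is a direct translation, there is no serious obstacle; the only points needing a little care are (i) the freeness of a flat module over the Artinian ring $k[\varepsilon]$, which is what lets one put the deformation into the normal form $R\oplus R\varepsilon$, and (ii) the treatment of the unit — a priori $\mathscr{R}$ carries its own unit $1_{\mathscr{R}}$, and one must observe that applying a suitable $l$ (equivalently, normalizing $\mu$) moves it to $1\in R\subset\mathscr{R}$ without changing the cohomology class. Both are standard, so the real content is the bookkeeping identity ``$\delta\mu=0\iff$ associativity of $\b$'' together with its degree-one analogue ``$\delta\phi=$ change of multiplication induced by $l$''.
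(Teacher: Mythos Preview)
Your argument is correct and follows exactly the approach the paper sketches: trivialize the underlying $k[\varepsilon]$-module as $R[\varepsilon]$, write the product as $ab+\mu(a,b)\varepsilon$, and identify associativity with the Hochschild $2$-cocycle condition and isomorphism with cohomologousness. The paper in fact gives only this outline and cites Gerstenhaber, so your write-up is, if anything, more complete (in particular your remarks on flatness $\Rightarrow$ freeness over $k[\varepsilon]$ and on normalizing the unit are not spelled out there).
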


The domain of the above bijection has a natural linear structure under which~\eqref{eq:bij} is a linear isomorphism.  Let us explain some of the details of Gerstenhaber's result.
\par

Consider a deformation $\mathscr{R}$ of $R$ over $\Spec(k[\varepsilon])$.  By choosing a $k[\e]$-linear isomorphism $R[\e]\equiv R\ot k[\varepsilon]\cong \mathscr{R}$ the deformation $\mathscr{R}$ may be identified 
with the $k[\varepsilon]$-module $R[\e]$ equipped with a multiplication
\[
a\cdot_{\mathscr{R}}b=ab+F_{\mathscr{R}}(a,b)\varepsilon, \quad a,b\in R\subset R\ot k[\varepsilon].
\]
The function $F_{\mathscr{R}}:R\ot R\to R$ defines a $2$-cocycle in the standard Hochschild cochain complex
\[
C^\b(R)=0\to R\to \Hom_k(R,R)\to \Hom_k(R\ot R,R)\to \Hom_k(R^{\ot 3},R)\to\cdots.
\]
This determines a map from deformations to $\HH^2(R)$.  
To define the inverse map, one simply uses a $2$-cocycle in the standard Hochschild cochain complex to 
define a mulplication on $R[\varepsilon]$.
The addition of isoclasses of deformations $[\mathscr{R}]+[\mathscr{R}']$ corresponds to addition of the 
functions $F_{\mathscr{R}}+F_{\mathscr{R}'}$, and scaling $c[\mathscr{R}]$ corresponds to scaling the function $c F_{\mathscr{R}}$.

The following lemma should be standard.

\begin{lemma}\label{lem:sigma_R}
Let $\mathscr{R}$ be an (infinitesimal) deformation of $R$ parametrized by $S=\Spec(A)$.  Then there is a $k$-linear mapping
\[
\Sigma_\mathscr{R}:T_pS\to \HH^2(R)
\]
which sends an element $\xi\in T_pS=\Hom_{\mathrm{Alg}}(A,k[\e])$ to the class corresponding to the deformation $\mathscr{R}\ot_Ak[\e]$, where we change base via $\xi$.
\end{lemma}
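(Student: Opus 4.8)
The plan is to reduce everything to Gerstenhaber's theorem (Theorem~\ref{thm:Gerstenhaber}) by precomposing the deformation-to-$\HH^2$ correspondence with the base-change operation along tangent vectors. First I would recall that a point $p\in S=\Spec(A)$ with residue field $k$ corresponds to the augmentation $A\to k$, and that the Zariski tangent space $T_pS$ is, by definition, $\Hom_{\mathrm{Alg}/k}(A,k[\e])$ — the set of local $k$-algebra maps lifting the augmentation. This is a $k$-vector space: given $\xi,\xi':A\to k[\e]$ one forms $\xi+\xi'$ by sending $a\mapsto \e(D_\xi(a)+D_{\xi'}(a))$ on the maximal ideal, where $D_\xi(a)$ is the $\e$-coefficient of $\xi(a)$, and scaling is analogous. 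Concretely, writing $\xi(a)=\bar a + D_\xi(a)\e$, the map $D_\xi:\mathfrak{m}_p/\mathfrak{m}_p^2\to k$ is a derivation-type functional and $\xi\mapsto D_\xi$ identifies $T_pS$ with $(\mathfrak{m}_p/\mathfrak{m}_p^2)^\ast$ in the usual way.

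Next, for each $\xi\in T_pS$ I would form the extended deformation $\mathscr{R}\ot_A k[\e]$, where $k[\e]$ is regarded as an $A$-algebra via $\xi$. Since $\mathscr{R}$ is flat over $A$ and $k[\e]$ is (trivially) an Artin local algebra with residue field $k$, the base change $\mathscr{R}\ot_A k[\e]$ is flat over $k[\e]$, and the fixed isomorphism $\mathscr{R}\ot_A k\cong R$ induces $(\mathscr{R}\ot_A k[\e])\ot_{k[\e]}k\cong \mathscr{R}\ot_A k\cong R$; so $\mathscr{R}\ot_A k[\e]$ is genuinely an infinitesimal deformation of $R$ parametrized by $\Spec(k[\e])$. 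Applying Gerstenhaber's bijection~\eqref{eq:bij} yields a class in $\HH^2(R)$. Define $\Sigma_\mathscr{R}(\xi)$ to be this class. Well-definedness is immediate: isomorphic choices at the $A$-level push forward to isomorphic deformations over $k[\e]$, and the target of~\eqref{eq:bij} is the set of isoclasses.

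The substantive point — and the main thing to check — is $k$-linearity of $\xi\mapsto \Sigma_\mathscr{R}(\xi)$. Here I would argue at the level of explicit cocycles. Trivialize $\mathscr{R}\cong R\ot_k A$ as an $A$-module, so that the multiplication on $\mathscr{R}$ has the form $a\cdot_\mathscr{R}b = ab + \sum_j G_j(a,b)\,m_j$ for a $k$-basis $\{m_j\}$ of $\mathfrak{m}_p$ (modulo higher-order corrections, which die after base change since $\e^2=0$), with each $G_j:R\ot R\to R$ a Hochschild $2$-cochain. Base-changing along $\xi$ replaces $m_j$ by $D_\xi(\bar m_j)\,\e$, so the deformed product becomes $a\cdot b+\bigl(\sum_j D_\xi(\bar m_j)\,G_j(a,b)\bigr)\e$; that is, the Gerstenhaber cocycle of $\mathscr{R}\ot_A k[\e]$ is $\sum_j D_\xi(\bar m_j)\,G_j$. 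Since $\xi\mapsto D_\xi$ is $k$-linear in $\xi$ and the $G_j$ are fixed, the assignment $\xi\mapsto [\sum_j D_\xi(\bar m_j) G_j]$ is $k$-linear into $\HH^2(R)$, using that the linear structure on isoclasses of deformations over $k[\e]$ matches addition and scaling of cochains (the last sentence before the lemma). The only mild subtlety is checking that the higher-order terms in the $A$-multiplication really do not contribute — they involve products $m_i m_j\in\mathfrak{m}_p^2$, which map to $D_\xi(\bar m_i)D_\xi(\bar m_j)\e^2=0$ in $k[\e]$ — so the truncation is harmless. This completes the construction; the routine verification that $\Sigma_\mathscr{R}$ is natural in $\mathscr{R}$ and compatible with further base change I would leave to the reader or state without proof, as the lemma only asserts existence of the linear map.
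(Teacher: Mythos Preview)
Your proof is correct and follows essentially the same approach as the paper: trivialize $\mathscr{R}\cong R\otimes A$ as an $A$-module, write the multiplication as the standard one plus a correction landing in $R\otimes m_A$, and observe that applying $\xi$ (as a functional on $m_A$ killing $m_A^2$) yields a Hochschild $2$-cocycle depending linearly on $\xi$. The paper's version is marginally more streamlined in that it packages your $\sum_j G_j(a,b)\,m_j$ as a single map $E:R\otimes R\to R\otimes m_A$ and sets $F_\xi=(1\otimes\xi)E$, making $F_{c\xi+c'\xi'}=cF_\xi+c'F_{\xi'}$ immediate without choosing a basis, but the content is identical.
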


In the statement of the above lemma $p$ is the unique point in $S$.

\begin{proof}
Given $\xi\in T_pS$ we let $Def_\xi=\mathscr{R}\ot_Ak[\e]$ denote the corresponding deformation.  For the proof we identify the tangent space $T_pS$ with $k$-linear maps $m_A\to k$ which vanish on $m_A^2$, where $m_A$ is the unique maximal ideal of $A$.  
We adopt an $A$-linear identification $\mathscr{R}=R\ot A$, and write the multiplication on $\mathscr{R}$ as $r\cdot_\mathscr{R} r'=rr'+E(r,r')$, where $r,r'\in R$ and $E$ is a linear function $E:R\otimes R\to R\ot m_A$.
\par

If we take $F_\xi=(1\ot \xi)E$, for $\xi\in T_pS$, then the multiplication on the base change $Def_\xi$ is given by $r\cdot_\xi r'=rr'+F_\xi(r,r')\e$.  Whence we have an equality in Hochschild cohomology
\[
\Sigma_{\mathscr{R}}(\xi)=\big[Def_\xi\big]=[F_\xi]\in \HH^2(R).
\]
By the definition of $F_\xi$ we see that $F_{c\xi+c'\xi'}=cF_\xi+c'F_{\xi'}$.  It follows that the map $\Sigma_{\mathscr{R}}:T_pS\to \HH^2(R)$ is $k$-linear.
\end{proof}

\begin{definition}
Given a deformation $\mathscr{R}$ of a Hopf algebra $R$ parametrized by $S$, we let
\[
\sigma_{\mathscr{R}}:T_pS\to H^2(R,k)
\]
denote the composite $T_pS\overset{\Sigma_\mathscr{R}}\longrightarrow \HH^2(R)\to H^2(R,k)$, where $\Sigma_{\mathscr{R}}$ is as in Lemma~\ref{lem:sigma_R}.
\end{definition}

\subsection{Cohomology classes for the coordinate algebra via deformations}
\label{sect:HO}

For the remainder of this section, we fix $\G$ a smooth (affine) algebraic group of dimension $n$ and a positive integer $r$.  We take $\g = \mathrm{Lie}(\G) = \mathrm{Lie}(\G_{(s)})$ for any $s \geq 1$; in particular, $\g = \mathrm{Lie}(\G_{(r)})$.  
We shall view $\O(\bG_{(r+1)})$ as a deformation of $\O(\G_{(r)})$ parametrized by $\G_{(r+1)}/\G_{(r)}$.  
One sees this geometrically using the pull-back square
\begin{equation}
\label{Frobquot}
\xymatrix{
\G_{(r)} \ar[r] \ar[d] & \G_{(r+1)} \ar[d] \\
\Spec(k) \ar[r] & \G_{(r+1)}/\G_{(r)}
}
\end{equation}

\begin{proposition}
The extension $\O(\G_{(r+1)}/\G_{(r)}) \to  \O(\bG_{(r+1)})$ is a deformation of $\O(\G_{(r)})$ parametized by 
$\G_{(r+1)}/\G_{(r)} \cong \G_{(1)}^{(r)}$.  We refer to this deformation of $\O(\G_{(r)})$ as $\O_{\mathrm{nat}}$.
\end{proposition}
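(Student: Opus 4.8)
The plan is to verify the two assertions in the statement separately: first that $\O(\G_{(r+1)}/\G_{(r)})\to\O(\G_{(r+1)})$ genuinely constitutes an infinitesimal deformation of $\O(\G_{(r)})$ in the sense recalled above, and second that the parametrizing scheme $\G_{(r+1)}/\G_{(r)}$ is canonically isomorphic to $\G_{(1)}^{(r)}$.

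For the deformation claim, I would first identify the coordinate Hopf algebra of the quotient. Since $\G$ is smooth, the quotient $\G_{(r+1)}/\G_{(r)}$ is again a finite group scheme, and one has a Hopf algebra inclusion $\O(\G_{(r+1)}/\G_{(r)})\hookrightarrow\O(\G_{(r+1)})$ exhibiting $\O(\G_{(r+1)})$ as a faithfully flat — hence free, by finiteness — module over $A:=\O(\G_{(r+1)}/\G_{(r)})$; this is the standard theory of quotients of finite group schemes (as in Jantzen's book, which the paper already cites). The algebra $A$ is local Artinian with residue field $k$, its maximal ideal being the augmentation ideal, so it is a legitimate base for an infinitesimal deformation. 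It remains to produce the required isomorphism $\O(\G_{(r+1)})\ot_A k\xrightarrow{\cong}\O(\G_{(r)})$. This is exactly the statement that the base change of $\G_{(r+1)}\to\G_{(r+1)}/\G_{(r)}$ along the base point $\Spec(k)\to\G_{(r+1)}/\G_{(r)}$ recovers $\G_{(r)}$, which is precisely the content of the pull-back square~\eqref{Frobquot}; concretely $A_+\cdot\O(\G_{(r+1)})$ is the ideal generated by the functions $f^{p^r}$ for $f\in m_{\G}$, so the quotient is $\O(\G_{(r)})=\O(\G)/(f^{p^r})$. Flatness of $\O(\G_{(r+1)})$ over $A$ together with this fixed identification is the definition of a deformation parametrized by $\Spec(A)=\G_{(r+1)}/\G_{(r)}$, so this part is a matter of assembling standard facts.

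For the identification $\G_{(r+1)}/\G_{(r)}\cong\G_{(1)}^{(r)}$, the cleanest route is functorial. The $r$-th Frobenius $F^r:\G\to\G^{(r)}$ has kernel $\G_{(r)}$ and, because $\G$ is smooth, $F^r$ is faithfully flat, so it induces an isomorphism $\G/\G_{(r)}\xrightarrow{\cong}\G^{(r)}$; restricting to the $(r+1)$-st Frobenius kernel of $\G$ identifies $\G_{(r+1)}/\G_{(r)}$ with the image of $\G_{(r+1)}$ in $\G^{(r)}$. That image is exactly the kernel of $F^1:\G^{(r)}\to\G^{(r+1)}=(\G^{(r)})^{(1)}$, i.e.\ $(\G^{(r)})_{(1)}$, because $F^{r+1}=F^1\circ F^r$ shows $\G_{(r+1)}$ is the preimage under $F^r$ of $(\G^{(r)})_{(1)}$. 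Finally $(\G^{(r)})_{(1)}=(\G_{(1)})^{(r)}$ since Frobenius twist commutes with taking Frobenius kernels (both operations are base changes along the power maps on $k$). Alternatively, and perhaps more in the spirit of the paper's explicit computations, one can check this on coordinate rings: $\O(\G_{(r+1)}/\G_{(r)})$ is the subalgebra of $\O(\G_{(r+1)})$ of $\G_{(r)}$-invariants, which for the Hopf algebra $\O(\G)/(f^{p^{r+1}})$ is generated by the $p^r$-th powers $\bar f^{p^r}$, and $x\mapsto x^{p^r}$ identifies this with $\O(\G)/(f^{p}) \ot_{\phi^r} k = \O(\G_{(1)}^{(r)})$.

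The main obstacle I expect is not any single deep point but rather the bookkeeping around smoothness: one needs flatness (indeed freeness) of $\O(\G_{(r+1)})$ over $\O(\G_{(r+1)}/\G_{(r)})$ and the exactness of the quotient sequence, and both genuinely use that $\G$ is smooth so that the Frobenius maps are faithfully flat and the Frobenius kernels have the expected infinitesimal structure; for a non-smooth $\G$ the quotient $\G_{(r+1)}/\G_{(r)}$ need not be $\G_{(1)}^{(r)}$ and the freeness can fail. Once smoothness is invoked correctly, the verification that the fixed isomorphism $\O(\G_{(r+1)})\ot_A k\cong\O(\G_{(r)})$ is induced by~\eqref{Frobquot} is essentially immediate.
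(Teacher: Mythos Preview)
Your proposal is correct and follows essentially the same route as the paper: the paper simply cites \cite[Prop.~I.9.5]{jantzen07} for the isomorphism $\G_{(r+1)}/\G_{(r)}\cong\G_{(1)}^{(r)}$ (induced by the Frobenius $\G_{(r+1)}\to\G^{(r)}$, exactly as in your functorial argument) and then says the deformation claim follows from the pull-back square~\eqref{Frobquot}, which is precisely what you unpack. One small remark: you slightly overstate the role of smoothness for flatness---freeness of $\O(\G_{(r+1)})$ over $\O(\G_{(r+1)}/\G_{(r)})$ holds for any finite normal subgroup scheme by general Hopf-algebra theory; smoothness of $\G$ is what makes the identification $\G_{(r+1)}/\G_{(r)}\cong\G_{(1)}^{(r)}$ go through (via faithful flatness of $F^r$ on $\G$).
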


\begin{proof}
The isomorphism $\G_{(r+1)}/\G_{(r)} \cong \G_{(1)}^{(r)}$ is induced by the Frobenius $\G_{(r+1)}\to \G^{(r)}$, and can be found in~\cite[Prop.\ I.9.5]{jantzen07}.  The fact that $\O(\G_{(r+1)}/\G_{(r)}) \to \O(\bG_{(r+1)})$ is a deformation of $\O(\G_{(r)})$ follows easily from the diagram~\eqref{Frobquot}.
\end{proof}

Take $\O=\O(\G_{(r)})$.  Note that $\g^{(r)}=T_1\G^{(r)}_{(1)}$.  We get from Lemma~\ref{lem:sigma_R} and $\O_{\mathrm{nat}}$ a canonical linear map $\sigma_{\O}=\sigma_{\O_{\mathrm{nat}}}:\g^{(r)}\to H^2(\O,k)$ and induced algebra map 
\[
\sigma'_\O:S(\g^{(r)}[2])\to H^\ast(\O,k),
\]
where $S(-)$ denotes the symmetric algebra.  From the identification $H^1(\O,k)=T_1\G_{(r)}=\g$, in conjunction with $\sigma'_\O$, we get yet another algebra map
\begin{equation}\label{eq:374}
\wedge^\ast(\g[1])\ot S(\g^{(r)}[2])\to H^\ast(\O,k).
\end{equation}
In Section~\ref{sect:proof} below we will prove the following proposition.

\begin{proposition}\label{prop:381}
The algebra map~\eqref{eq:374} is an isomorphism of $\G_{(r)}$-algebras.  In particular, $\sigma'_\O:S(\g^{(r)}[2])\to H^\ast(\O(\G_{(r)}),k)$ is an isomorphism modulo nilpotents.
\end{proposition}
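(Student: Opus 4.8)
The plan is to identify $H^\ast(\O(\G_{(r)}),k)$ by a direct homological computation and then check that it matches the target of the map~\eqref{eq:374}, with the map~$\sigma_\O$ providing exactly the degree-$2$ generators. First I would reduce the problem to a purely algebraic one: since $H^\ast(\O,k)=\Ext^\ast_{\O}(k,k)$ depends only on the algebra structure of $\O=\O(\G_{(r)})$, and since $\G$ is smooth of dimension $n$, the coordinate algebra of the Frobenius kernel is a truncated polynomial algebra
\[
\O(\G_{(r)})\cong k[y_1,\dots,y_n]/(y_1^{p^r},\dots,y_n^{p^r}),
\]
after choosing a regular system of parameters at the identity (this uses smoothness in an essential way — it is what makes the $y_i$ a genuine regular sequence so that the quotient is a complete intersection / tensor product of truncated polynomial rings). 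For such a tensor product of local algebras the cohomology is the tensor product of the cohomologies, so it suffices to treat a single truncated polynomial algebra $k[y]/y^{p^r}$.

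Next I would compute $\Ext^\ast_{k[y]/y^{p^r}}(k,k)$ by the standard periodic free resolution of $k$ (alternating multiplication by $y$ and by $y^{p^r-1}$). Since $p$ is odd, one gets $\Ext^\ast=\wedge(x)\ot k[z]$ with $\deg x=1$, $\deg z=2$; taking the tensor product over the $n$ variables yields $\wedge^\ast(\g[1])\ot S(\g^{(r)}[2])$ as an algebra, where the degree-$1$ part is naturally $\g=T_1\G_{(r)}=H^1(\O,k)$ and the degree-$2$ part is the base change of $\g$ along the $p^r$-power map, i.e.\ $\g^{(r)}$ — this is where the Frobenius twist enters, since the second Ext class is ``$y^{p^r-1}\,dy$'' and behaves $p^r$-linearly in the chosen coordinate. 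So abstractly the two sides of~\eqref{eq:374} are isomorphic graded algebras; the real content is that the \emph{specific} map built from $\sigma_\O$ (and from the canonical identification in degree $1$) realizes this isomorphism, equivariantly for $\G_{(r)}$.

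For the degree-$1$ part this is the definition. For degree $2$ the key step is to unwind the deformation $\O_{\mathrm{nat}}$ concretely: the extension $\O(\G_{(r+1)}/\G_{(r)})\to\O(\G_{(r+1)})$, after the isomorphism $\G_{(r+1)}/\G_{(r)}\cong\G_{(1)}^{(r)}$, is (in suitable coordinates) the family $k[y_1,\dots,y_n]/(y_i^{p^r}-c_i)$ over $k[c_1,\dots,c_n]/(c_i^{p})$ with $c_i$ in degree corresponding to $\g^{(r)}$; differentiating along $\partial/\partial c_i$ produces precisely the Hochschild $2$-cocycle $(a,b)\mapsto$ (the ``$y_i^{p^r-1}$'' term of $ab$), whose image in $H^2(\O,k)$ under $\HH^2\twoheadrightarrow H^2(\O,k)$ is the class $z_i$ above. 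Thus $\sigma_\O$ hits the polynomial generators, the induced algebra map $\wedge^\ast(\g[1])\ot S(\g^{(r)}[2])\to H^\ast(\O,k)$ is surjective, and a dimension count in each degree (the Poincar\'e series of both sides agree because both are $(1+t)^n(1-t^2)^{-n}$) forces it to be an isomorphism. Equivariance under $\G_{(r)}$ follows because every construction — the deformation $\O_{\mathrm{nat}}$, the isomorphism $\G_{(r+1)}/\G_{(r)}\cong\G_{(1)}^{(r)}$, and the identification $H^1(\O,k)=\g$ — is natural in $\G$, hence compatible with the adjoint $\G_{(r)}$-action, so the abstract isomorphism is automatically a map of $\G_{(r)}$-algebras.

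The main obstacle I anticipate is the degree-$2$ identification: one must show carefully that the Hochschild class produced by the deformation $\O_{\mathrm{nat}}$ is nonzero in $H^2(\O,k)$ and is, up to the $p^r$-th power twist, the ``canonical'' periodicity class — i.e.\ that $\sigma_\O:\g^{(r)}\to H^2(\O,k)$ is injective with image complementary to (the image of) $\wedge^2\g$. Equivalently, one needs the surjectivity $\HH^2(\O)\to H^2(\O,k)$ to carry the deformation classes onto a spanning set of the ``polynomial'' part. This is a local computation — it reduces, by the tensor decomposition, to the single-variable algebra $k[y]/y^{p^r}$, where the deformation $k[y]/(y^{p^r}-c)$ is manifestly a nontrivial (indeed versal) $1$-parameter deformation — but matching signs, the Frobenius twist, and the passage from Hochschild to Hopf cohomology is the delicate bookkeeping that the argument in Section~\ref{sect:proof} will have to carry out in full.
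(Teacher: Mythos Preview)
Your proposal is correct and follows essentially the same route as the paper: reduce via smoothness to a tensor product of truncated polynomial algebras, verify in the rank-one case $k[y]/(y^{p^r})$ that the deformation $k[y]/(y^{p^r}-c)$ produces the nonzero periodicity class in $H^2$, and then conclude that $\sigma_\O$ lands on the polynomial generators so that~\eqref{eq:374} is an isomorphism. The paper organizes the rank-one and tensor-product computations as separate statements (Lemma~\ref{lem:349} and Proposition~\ref{prop:439}) and finishes by observing that $\sigma_\O(\g^{(r)})$ is a complement to $\wedge^2\g$ in $H^2$, whereas you invoke a Poincar\'e-series comparison; these are equivalent endgames. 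One small difference worth noting: for $\G_{(r)}$-equivariance the paper does not appeal to naturality but instead uses Corollary~\ref{cor:g2inv}, i.e.\ the lift $\sigma_\D$ to the cohomology of the double, to see that the image of $\sigma_\O$ lies in the invariants --- your naturality argument is also valid since $\sigma_\O$ is defined intrinsically from $\O_{\mathrm{nat}}$, but the paper's route avoids having to check that the coordinate-dependent cocycle computation is compatible with the coordinate-free definition.
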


The $\mathbb{G}_{(r)}$-action on the product $\wedge^\ast(\g[1])\ot S(\g^{(r)}[2])$ is induced by the adjoint action on $\g$ and the trivial action on its twist $\g^{(r)}$.

\begin{remark}
We can easily establish an {\it abstract} algebra isomorphism between $\wedge^\ast(\g)\ot S(\g^{(r)}[2])$ and the cohomology $H^\ast(\O,k)$
as follows.  As verified in \cite[Thm.\ 14.4]{waterhouse12}, the fact that $\bG_{(r)}$ is connected implies that there is an isomorphism $\O \cong
k[x_1,\ldots,x_n]/(x_1^{p^{e_1}},\ldots,x_n^{p^{e_n}})$ for some $n, e_1,\ldots,e_n > 0$.  The well known computation of $H^\ast(k[x]/(x^{p^e}),k)
\simeq H^\ast(\mathbb Z/p^e,k)$
and the K\"unneth Theorem thus implies the asserted isomorphism.
The significance of Proposition~\ref{prop:381} is that we may use the deformation map $\sigma_\O$ to arrive at such an isomorphism.  We will see below that $\sigma_\O$ admits a lift to the cohomology of the double $\D\G_{(r)}$.  The existence of such a lift is an essential point in the proof that the cohomology of the double is finitely generated.
\end{remark}

\subsection{Cohomology classes for the double via deformations}

Since $\G_{(r)}$ acts trivially on the quotient $\G_{(r+1)}/\G_{(r)}$ we see that the image of the inclusion
\[
\O(\G_{(r+1)}/\G_{(r)})\to \O(\G_{(r+1)})=\O_{\mathrm{nat}}
\]
is in the $\G_{(r)}$-invariants.  Hence the induced inclusion into the smash product
\[
\O(\G_{(r+1)}/\G_{(r)})\to \O_{\mathrm{nat}}\# k\G_{(r)}
\]
has central image, where $\G_{(r)}$ acts via the adjoint action on $\O_{\mathrm{nat}}$.  Furthermore, the reduction $(\O_{\mathrm{nat}}\# k\G_{(r)})\ot_{\O(\G_{(r+1)}/\G_{(r)})}k$ recovers the double $\D\G_{(r)}$.  Whence we have that the above smash product is a deformation of the double parametrized by $\G^{(r)}_{(1)}\cong \G_{(r+1)}/\G_{(r)}$.  We denote this deformation $\D_{\mathrm{nat}}=\O_{\mathrm{nat}}\# k\G_{(r)}$.
\par

The deformation $\D_{\mathrm{nat}}$ induces a map to cohomology
\[
\sigma_\D\equiv\sigma_{\D_{\mathrm{nat}}}:\g^{(r)}\to H^2(\D\G_{(r)},k)
\]
and subsequent graded algebra morphism $\sigma'_\D:S(\g^{(r)}[2])\to H^\ast(\D\G_{(r)},k)$.  

\begin{proposition}\label{prop:g2HD}
The triangle
\begin{equation}\label{eq:389}
\xymatrixrowsep{4mm}
\xymatrix{
 & H^2(\D\G_{(r)},k)\ar[dr]^{\mathrm{res}} &  \\
\g^{(r)}\ar[ur]^{\sigma_D}\ar[rr]^{\sigma_\O} & & H^2(\O(\G_{(r)}),k)
}
\end{equation}
commutes.
\end{proposition}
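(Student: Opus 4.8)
The plan is to track the deformation-theoretic construction of the two classes through the restriction map and check that they agree on the nose, before passing to cohomology. The key point is that both $\sigma_\O$ and $\sigma_\D$ arise from the \emph{same} geometric deformation — the embedding $\G_{(r)}\to\G_{(r+1)}$ viewed as a family over $\G_{(r+1)}/\G_{(r)}\cong\G_{(1)}^{(r)}$ — and that the restriction map $H^\ast(\D\G_{(r)},k)\to H^\ast(\O(\G_{(r)}),k)$ is induced by the Hopf algebra inclusion $\O(\G_{(r)})^{\mathrm{cop}}\to\D\G_{(r)}$. So the first step is to record that $\D_{\mathrm{nat}}=\O_{\mathrm{nat}}\#k\G_{(r)}$ restricts, as a deformation over $\G_{(1)}^{(r)}$, to $\O_{\mathrm{nat}}$ itself: the inclusion $\O_{\mathrm{nat}}\to\D_{\mathrm{nat}}$ is $\O(\G_{(r+1)}/\G_{(r)})$-linear, is an algebra map, and is compatible with the reductions modulo the maximal ideal, so it exhibits $\O_{\mathrm{nat}}$ as a ``subdeformation'' of $\D_{\mathrm{nat}}$ along the restriction-of-scalars functor.

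Next I would unwind what this says at the level of Hochschild $2$-cocycles, using the explicit description from Lemma~\ref{lem:sigma_R}. Pick an $A$-linear identification $\D_{\mathrm{nat}}=\D\G_{(r)}\ot A$ with $A=\O(\G_{(1)}^{(r)})$, and arrange it to restrict to an $A$-linear identification $\O_{\mathrm{nat}}=\O\ot A$; write the multiplication on $\D_{\mathrm{nat}}$ as $x\cdot_\mathscr{R}y=xy+E_\D(x,y)$ with $E_\D\colon\D\G_{(r)}^{\ot2}\to\D\G_{(r)}\ot m_A$. Because $\O_{\mathrm{nat}}$ is a subalgebra deformation, the restriction of $E_\D$ to $\O^{\ot2}$ is exactly the cocycle $E_\O$ defining $\O_{\mathrm{nat}}$, and lands in $\O\ot m_A$. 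For $\xi\in T_1\G_{(1)}^{(r)}=\g^{(r)}$ we then have $F_\xi^\D|_{\O^{\ot2}}=(1\ot\xi)E_\D|_{\O^{\ot2}}=(1\ot\xi)E_\O=F_\xi^\O$. In other words, the Hochschild $2$-cocycle $F_\xi^\D$ on $\D\G_{(r)}$ pulls back along $\O\hookrightarrow\D\G_{(r)}$ to the Hochschild $2$-cocycle $F_\xi^\O$ on $\O$. This gives commutativity of the corresponding triangle one level up, in Hochschild cohomology:
\[
\Sigma_{\O_{\mathrm{nat}}}=\big(\text{restriction }\HH^2(\D\G_{(r)})\to\HH^2(\O)\big)\circ\Sigma_{\D_{\mathrm{nat}}}.
\]
Here I am using functoriality of $\HH^\ast$ in the algebra variable with respect to the inclusion $\O\hookrightarrow\D\G_{(r)}$ (restriction of bimodules along this map, combined with the augmentation $\D\G_{(r)}\to\O$ giving $\D\G_{(r)}$ an $\O$-bimodule structure); one must be slightly careful that the functoriality used to define this $\HH$-level restriction is compatible with the functoriality of deformations, but both are induced by literally the same algebra map, so this is formal.

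The final step is to pass from $\HH^2$ to $H^2(-,k)$. The maps $\sigma_\O$ and $\sigma_\D$ are by definition the composites of $\Sigma_{\O_{\mathrm{nat}}}$ and $\Sigma_{\D_{\mathrm{nat}}}$ with the surjections $\HH^2(\O)\to H^2(\O,k)$ and $\HH^2(\D\G_{(r)})\to H^2(\D\G_{(r)},k)$ coming from the Hopf structure (as recalled just before Theorem~\ref{thm:Gerstenhaber}). So I would check the naturality square asserting that the diagram with rows $\HH^2(\D\G_{(r)})\to H^2(\D\G_{(r)},k)$ and $\HH^2(\O)\to H^2(\O,k)$ and vertical restriction maps commutes — this is again a consequence of the fact that the Hopf algebra map $\O(\G_{(r)})^{\mathrm{cop}}\to\D\G_{(r)}$ is compatible with the splittings $k\to R^{\mathrm{ad}}$ used to define those surjections, equivalently that the diagram of $\Ext$-functors $\Ext_{R\ot R^{op}}(R,R)\to\Ext_R(k,k)$ is natural in Hopf algebra maps. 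Composing the two commuting squares yields commutativity of~\eqref{eq:389}. The main obstacle is purely bookkeeping: matching up the three flavors of functoriality (of deformations, of Hochschild cohomology, and of the Hopf-cohomology quotient) so that they are all induced by the single inclusion $\O(\G_{(r)})\hookrightarrow\D\G_{(r)}$; once the identifications of $\D_{\mathrm{nat}}$ with $\D\G_{(r)}\ot A$ and of $\O_{\mathrm{nat}}$ with $\O\ot A$ are chosen compatibly, everything is forced.
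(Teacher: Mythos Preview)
Your approach is essentially the paper's: both start from the inclusion $\O_{\mathrm{nat}}\hookrightarrow\D_{\mathrm{nat}}$ as a map of deformations over $\O(\G_{(r+1)}/\G_{(r)})$, choose compatible $A$-linear trivializations so that $E^\D|_{\O\ot\O}=E^\O$, and deduce the cocycle identity $F^\D_\xi|_{\O\ot\O}=F^\O_\xi$ for each $\xi\in\g^{(r)}$. That computation is correct and is the whole content of the proposition.

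The one genuine issue is your packaging of this as a ``restriction map $\HH^2(\D\G_{(r)})\to\HH^2(\O)$'' built from an ``augmentation $\D\G_{(r)}\to\O$''. Hochschild cohomology is not functorial in algebra maps, and the candidate projection $\O\# k\G_{(r)}\to\O$ given by $\mathrm{id}_\O\ot\epsilon_{k\G_{(r)}}$ is \emph{not} a map of $\O$-bimodules: right $\O$-linearity fails exactly because the adjoint action of $\G_{(r)}$ on $\O$ is nontrivial. So neither vertical map in your $\HH$-level square is available as stated, and the naturality argument you sketch for passing to $H^2(-,k)$ then has nothing to stand on. The paper avoids this detour entirely: from $F^\D_\xi|_{\O\ot\O}=F^\O_\xi$ one composes with the counits to get the Hopf $2$-cocycles $\bar F^\D_\xi=\epsilon_\D\circ F^\D_\xi$ and $\bar F^\O_\xi=\epsilon_\O\circ F^\O_\xi$ in $\Hom_k((-)^{\ot 2},k)$; since $\epsilon_\D|_\O=\epsilon_\O$ one has $\bar F^\D_\xi|_{\O\ot\O}=\bar F^\O_\xi$, and the restriction $H^\ast(\D\G_{(r)},k)\to H^\ast(\O,k)$ on the bar complex is literally restriction of such functions. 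That single line replaces both of your naturality squares. Your cocycle computation already contains this; just drop the Hochschild-level framing.
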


\begin{proof}
Take $\O=\O(\G_{(r)})$ and $\D=\D\G_{(r)}$.  The diagram~\eqref{eq:389} follows from the diagram
\[
\xymatrix{
\O_{\mathrm{nat}}\ar[rr]^{incl}\ar[d] & & \D_{\mathrm{nat}}\ar[d]\\
\O\ar[rr]^{incl} & & \D,
}
\]
where the top map is one of $\O(\G_{(r+1)}/\G_{(r)})$-algebras and the vertical maps are given by applying 
$(-) \ot_{\O(\G_{(r+1)}/\G_{(r)})}k$.  
In particular, the commutative square implies that the maps $E^D$ and $E^\O$ from the proof of Lemma~\ref{lem:sigma_R} 
can be chosen in a compatible manner so that $E^D|_{\O\ot \O}=E^\O$.  Hence the resulting Hopf 2-cocycles 
$\bar{F}^\D_\xi$ and $\bar{F}^\O_\xi$, corresponding to an element $\xi\in \g^{(r)}$, are such that
\[
\mathrm{res}(\sigma_\D(\xi))=\mathrm{res}([\bar{F}^\D_\xi])=[\bar{F}^\D_\xi|_{\O\ot\O}]=[\bar{F}^\O_\xi]=\sigma_\O(\xi).
\]
\end{proof}

\begin{corollary}\label{cor:g2inv}
The map $\sigma_\O:\g^{(r)}\to H^2(\O(\G_{(r)}),k)$ of Section~\ref{sect:HO} has image in the invariants $H^2(\O(\G_{(r)}),k)^{\G_{(r)}}$.
\end{corollary}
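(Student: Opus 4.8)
The plan is to deduce this from the commuting triangle in Proposition~\ref{prop:g2HD} together with the general fact that the restriction map $\mathrm{res}:H^2(\D\G_{(r)},k)\to H^2(\O(\G_{(r)}),k)$ lands in the $\G_{(r)}$-invariants of $H^2(\O(\G_{(r)}),k)$. Since $\sigma_\O = \mathrm{res}\circ\sigma_\D$, the image of $\sigma_\O$ is automatically contained in the image of $\mathrm{res}$, hence in the invariants.

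So the real content is the statement that the image of $\mathrm{res}$ lies in $H^2(\O,k)^{\G_{(r)}}$. First I would recall the conceptual reason: $H^\ast(\O,k) = \Ext^\ast_{\O(\G_{(r)})}(k,k)$ carries a natural $\G_{(r)}$-action coming from the conjugation action of $\G_{(r)}$ on the subalgebra $\O=\O(\G_{(r)})^{\mathrm{cop}}$ inside $\D\G_{(r)}$ (this is precisely the adjoint action built into the smash product $\D\G_{(r)} = \O\# k\G_{(r)}$). Under this action, a class in the image of the restriction/inflation map from $H^\ast(\D\G_{(r)},k)$ is invariant, because such a class extends to a $\D\G_{(r)}$-equivariant datum and the $\G_{(r)}$-action on cohomology of the normal-type subalgebra is implemented by inner automorphisms of $\D\G_{(r)}$. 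Concretely, $\O$ is a normal Hopf subalgebra of $\D\G_{(r)}$ (it is the image of the Hopf map $\O(\G_{(r)})^{\mathrm{cop}}\to\D\G_{(r)}$, and the $k\G_{(r)}$-factor normalizes it by construction), and for a normal Hopf subalgebra the edge map from the Lyndon--Hochschild--Serre spectral sequence for the extension $\O\to\D\G_{(r)}\to k\G_{(r)}$ has image exactly in the invariants $H^\ast(\O,k)^{k\G_{(r)}} = H^\ast(\O,k)^{\G_{(r)}}$. This is the standard five-term exact sequence / edge-map fact, and I would simply cite it (e.g., the LHS spectral sequence for Hopf algebra extensions as used later in the paper).

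The one point that needs a sentence of care is matching up two a priori different $\G_{(r)}$-actions on $H^2(\O,k)$: the one appearing in the statement of Corollary~\ref{cor:g2inv} (and used in Proposition~\ref{prop:381}, where it is described as induced from the adjoint action on $\g = H^1(\O,k)$ and the trivial action on $\g^{(r)}$) and the one arising from the edge map of the LHS spectral sequence. I would observe that both are induced by functoriality from the adjoint action of $\G_{(r)}$ on the Hopf algebra $\O(\G_{(r)})$, so they coincide; the adjoint action on $\g^{(r)} = T_1\G^{(r)}_{(1)}$ is trivial because $\G_{(r)}$ acts trivially on any $r$-th Frobenius twist (the Lemma following the Frobenius-twist discussion), consistent with $\sigma_\O$ having invariant image.

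The main obstacle, such as it is, is purely bookkeeping: making the identification of $\O$ as a \emph{normal} Hopf subalgebra of $\D\G_{(r)}$ explicit enough to invoke the edge-map statement, and checking that no sign or twist obstructs the identification of the two group actions. There is no hard computation here — the corollary is essentially a formal consequence of Proposition~\ref{prop:g2HD} plus the elementary property that restriction/inflation along a normal Hopf subalgebra has invariant image.
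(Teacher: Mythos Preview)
Your proposal is correct and takes essentially the same approach as the paper: both use Proposition~\ref{prop:g2HD} to factor $\sigma_\O$ through the restriction map, then argue that restriction lands in the $\G_{(r)}$-invariants. The paper's justification of the latter is slightly more direct than your LHS edge-map argument---it simply observes that for any $\D\G_{(r)}$-resolution $P$ of $k$ one has $\Hom_{\D\G_{(r)}}(P,k)=\Hom_{\O}(P,k)^{\G_{(r)}}$ at the cochain level---but this is exactly the computation underlying the edge map you cite.
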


\begin{proof}
The restriction $H^\b(\D,k)\to H^\b(\O,k)$ is induced by the cochain inclusion
\[
\Hom^\b_\D(P,k)=\Hom^\b_\O(P,k)^{\G_{(r)}}\to \Hom^\b_\O(P,k),
\]
where $P$ is any resolution of $k$ over $\D$.  Hence the lifting of Proposition~\ref{prop:g2HD} implies that $\sigma_{\O}$ has image in the $\G_{(r)}$-invariants.
\end{proof}

We can consider also the inflation $H^\ast(\G_{(r)},k)\to H^\ast(\D\G_{(r)},k)$ induced by the Hopf projection $\D\G_{(r)}\to k\G_{(r)}$.  This inflation, in conjunction with the algebra map $\sigma'_\D$, represent contributions to the cohomology of the double coming from the two constituent factors $k\G_{(r)}$ and $\O$.

\begin{definition}\label{def:g2HD}
We let
\[
\theta_r:H^\ast(\G_{(r)},k)\ot S(\g^{(r)}[2])\to H^\ast(\D\G_{(r)},k)
\]
denote the product of the inflation from $H^\ast(\G_{(r)},k)$ and $\sigma'_\D$.
\end{definition}

We will find in Section~\ref{sect:fg} that the map $\theta_r$ is finite.  It will follow that the cohomology of the double is finitely generated.


\section{Proof of Proposition~\ref{prop:381}}
\label{sect:proof}

For a deformation $\mathscr{R}$ of an algebra $R$ parametrized by $S=\Spec(A)$, we view the tangent space $T_pS$ as the first cohomology $H^1(A,k)$.  (Both of which are identified with algebra maps to the dual numbers $\Hom_{\mathrm{Alg}}(A,k[\e])$.)  So $\sigma_\mathscr{R}$ will appear as
\[
\sigma_\mathscr{R}:H^1(A,k)\to H^2(R,k).
\]
\par

In the case $\G=\G_a$, we will see that the map $\sigma_\O$ induced by the deformation $\O_{\mathrm{nat}}$ (which we denote by $\Z$ in this case)  behaves like the Bockstein map for the integral cohomology of a cyclic group with coefficients in $\mathbb{F}_p$.  In particular, it picks out an algebra generator in second cohomology.  From this observation we will deduce Proposition~\ref{prop:381} for general smooth $\G$.

\subsection{Generalized (higher) Bocksteins for $\G_{a}$}

In this subsection, we consider the special case $\G = \G_a$, the additive group (whose coordinate algebra is a polynomial 
algebra on one variable).
Consider the deformation $\O(\G_{a(r+1)})=k[t]/(t^{p^{r+1}})$ of $\O(\G_{a(r)})=k[t]/(t^{p^r})$ parametrized by $\O(\G_{a(1)}^{(r)})=(k[t]/(t^p))\ot_{\phi^r}k$.  To ease notation take $\Z=\O(\G_{a(r+1)})$, $Z=\O(\G_{a(r)})$ and $Z'=\O(\G_{a(1)}^{(r)})$.  The deformation $\Z$ produces a map
\[
\sigma_\Z:H^1(Z',k)\to H^2(Z,k).
\]
We let $\alpha\in H^1(Z',k)=\Hom_{\mathrm{Alg}}(Z',k[\e])$ denote the class given by the projection $Z'\to k[\e]$, $t\mapsto \e$.

\begin{definition}
Take $\beta\equiv\sigma_\Z(\alpha) \in H^2(Z,k)$.  We say that $\beta$ is the (higher order) Bockstein of the class $\alpha \in H^1(Z',k)$.
\end{definition}

Recall that for $i \geq0$ and $q> 1$ the cohomology $H^i(k[t]/(t^q),k)$ is $1$ dimensional.  (One can see this directly from the minimal, periodic, resolution of $k$.)  Hence $H^1(Z',k)$ and $H^2(Z,k)$ are $1$ dimensional.

\begin{lemma}\label{lem:349}
The map $\sigma_\Z:H^1(Z',k)\to H^2(Z,k)$ is a linear isomorphism.  In particular, $\beta$ is nonzero.
\end{lemma}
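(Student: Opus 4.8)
The plan is to identify the map $\sigma_\Z$ explicitly on the level of Hochschild cochains and to show the resulting class in $H^2(Z,k)$ generates. First I would recall the standard minimal periodic resolution of $k$ over $Z = k[t]/(t^{p^r})$, namely the $2$-periodic complex $\cdots \to Z \xrightarrow{t} Z \xrightarrow{t^{p^r - 1}} Z \xrightarrow{t} Z \to k$, so that $H^1(Z,k)$ is spanned by a class dual to the first ``$t$'' map and $H^2(Z,k)$ is spanned by a class dual to the ``$t^{p^r-1}$'' map. The comparison I want is with the cohomology of $\mathbb Z/p^r$: under the abstract algebra isomorphism $Z \cong k[\mathbb Z/p^r]$, the higher Bockstein $\sigma_\Z$ should coincide with the connecting map for the coefficient sequence coming from $\mathbb Z/p^{r+1} \twoheadrightarrow \mathbb Z/p^r$, which is known to send the degree-$1$ generator to the degree-$2$ generator (this is the classical fact that $H^\ast(\mathbb Z/p^r,\mathbb F_p)$ is generated by a degree-$1$ class $\eta$ and the Bockstein $\beta(\eta)$ in degree $2$, with $\beta(\eta)$ a polynomial generator). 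So the strategy reduces to: (1) make the deformation-theoretic map $\sigma_\Z$ concrete, and (2) match it to the classical Bockstein.

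The most direct route, which avoids invoking the $\mathbb Z/p^r$ comparison as a black box, is to compute $\sigma_\Z$ by hand. Following the recipe in Lemma~\ref{lem:sigma_R}, I would write the deformation $\Z = k[t]/(t^{p^{r+1}})$ as $Z \ot A$ with $A = k[u]/(u^p)$ (where $u = t^{p^r}$ after the Frobenius twist, so $A = Z'$), with the $A$-linear identification $\Z \cong Z \ot_k A$ given by sending a monomial $t^j$, $0 \le j < p^{r+1}$, to $t^{j \bmod p^r} \ot u^{\lfloor j/p^r\rfloor}$; then the ``correction term'' $E \colon Z \ot Z \to Z \ot m_A$ in the multiplication is supported precisely on the product $t^a \cdot t^b$ with $a + b \ge p^r$, where $t^a t^b = t^{a+b-p^r}\ot u$ rather than $0$. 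Evaluating against $\alpha \colon u \mapsto \e$ gives the Hochschild $2$-cocycle $F_\alpha(t^a,t^b) = t^{a+b-p^r}$ for $a+b\ge p^r$ and $0$ otherwise. I would then chase this through the standard comparison between the bar complex and the minimal resolution above, the key point being that $F_\alpha$ restricted to $t^{p^r - 1}\ot t$ (or whatever generates the relevant $\Ext$ cycle) is $t^0 = 1 \ne 0$, so the cohomology class $[F_\alpha] = \beta$ is a nonzero element of the one-dimensional space $H^2(Z,k)$. Since $H^1(Z',k)$ is also one-dimensional and $\alpha$ is a basis of it, a nonzero linear map between one-dimensional spaces is an isomorphism.

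The step I expect to be the main obstacle is the bookkeeping in (1)–(2): the translation from the Hochschild $2$-cocycle $F_\alpha$ on the bar complex to a nonvanishing statement against the minimal periodic resolution. Concretely, one has to produce an explicit chain map from (the degree-$\le 2$ part of) the minimal resolution of $k$ over $Z$ to the bar resolution, pair $F_\alpha$ with the image of the degree-$2$ generator, and verify the pairing is a unit in $k$ rather than $0$. This is a finite computation with divided-power-type combinatorics (it is essentially the assertion that the relevant coefficient is a nonzero integer mod $p$, paralleling the classical fact that the integral Bockstein $\mathbb Z/p^r \to \mathbb Z/p^{r+1}$ hits a generator). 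Once that single coefficient is pinned down as nonzero, the lemma follows immediately from the one-dimensionality already noted, and in particular $\beta = \sigma_\Z(\alpha)\neq 0$. An alternative, if one prefers to minimize computation, is to argue contrapositively: if $\sigma_\Z(\alpha) = 0$ then the deformation $\Z$ would be trivial to first order over the tangent direction $\alpha$, hence (since this is the unique tangent direction of $\Spec Z'$) $\O(\G_{a(r+1)})$ would be a trivial deformation of $\O(\G_{a(r)})$ along all of $\G_{a(1)}^{(r)}$, i.e.\ $k[t]/(t^{p^{r+1}}) \cong (k[t]/(t^{p^r})) \ot k[u]/(u^p)$ as algebras — which is false since the left side has an element ($t$) of multiplicative order $p^{r+1}$ and the right side does not. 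I would likely present the direct cocycle computation as the main argument and mention this softer contradiction as a sanity check.
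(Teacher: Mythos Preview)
Your main approach is correct and tracks the paper's closely: both compute the explicit Hochschild $2$-cocycle $F_\alpha(t^a,t^b) = t^{a+b-p^r}$ (for $a+b \ge p^r$, else $0$) and then argue the induced class in $H^2(Z,k)$ is nonzero. The paper, however, bypasses the chain-map comparison to the minimal resolution that you flag as the main obstacle. It simply composes $F_\alpha$ with the counit to get $\bar{F}_\alpha(t^l,t^m) = \delta_{l+m,p^r}$ and checks, directly in the bar (Hopf cochain) complex computing $H^*(Z,k)$, that every coboundary $df$ satisfies
\[
df(t^i,t^{p^r-i}) \;=\; \epsilon(t^i)f(t^{p^r-i}) - f(t^i\cdot t^{p^r-i}) + f(t^i)\epsilon(t^{p^r-i}) \;=\; -f(t^{p^r}) \;=\; -f(0) \;=\; 0
\]
for $0 < i < p^r$. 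Since $\bar{F}_\alpha(t^i,t^{p^r-i}) = 1$, the class cannot be a coboundary. This is precisely your ``key point'' but executed without any resolution-comparison bookkeeping, so the step you anticipated as delicate evaporates.

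Your alternative contrapositive argument does not work as stated and should be dropped. The vanishing of $\sigma_\Z(\alpha) \in H^2(Z,k)$ says only that the image of the Hochschild class under the surjection $\HH^2(Z) \twoheadrightarrow H^2(Z,k)$ is zero; the class $\Sigma_\Z(\alpha) \in \HH^2(Z)$ itself could still be nonzero, so the first-order deformation need not be trivial. Even granting $\Sigma_\Z(\alpha) = 0$, triviality over $k[\e]$ does not propagate to triviality of the full deformation over the non-reduced base $Z' = k[u]/(u^p)$, so you cannot conclude $k[t]/(t^{p^{r+1}}) \cong k[t]/(t^{p^r}) \ot k[u]/(u^p)$ from that alone.
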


\begin{proof}
It suffices to show that the image $\beta$ of $\alpha\in H^1(Z',k)$ is nonzero.  Consider the base change $\Z\ot_{Z'}k[\e]$ via $\alpha$, and the $k[\e]$-linear identification $\Z\ot_{Z'}k[\e]\cong Z[\e]$ given by
\[
Z[\e]\overset{\cong}\longrightarrow \Z\ot_{Z'}k[\e],\ \ t^i\mapsto t^i\ot 1,\ t^i\varepsilon\mapsto t^i\ot \e.
\]
This induces a multiplication $z\cdot_\alpha z'=zz'+F_\alpha(z,z')\varepsilon$ on $Z[\e]$, where $F_\alpha$ is a Hochschild $2$-cocycle.  We have then $\left[\Z\ot_{Z'}k[\e]\right]=[F_\alpha]\in \HH^2(Z)$, and the corresponding Hopf cohomology class is $[\bar{F}_\alpha]\in H^2(Z,k)$, where $\bar{F}_\alpha$ is the composite of $F_\alpha$ with the counit $\bar{F}_\alpha=\epsilon F_\alpha$.
\par

We want to show that $\beta=\sigma_\Z(\alpha)=[\bar{F}_\alpha]$ is nonzero (i.e., that $\bar{F}_\alpha$ is not a coboundary).  One sees directly that $\bar{F}_\alpha(t^l,t^m)=\delta_{l+m,p^r}$, and in particular $\bar{F}_\alpha(t^i,t^{p^r-i})=1$.  One also sees that the differential of any degree $1$ function $f\in \Hom_k(Z,k)$ in the Hopf cochain complex for $Z$ is such that
\[
d(f)(t^i,t^{p^r-i})=\pm f(t^{p^r})=\pm f(0)=0.
\]
Therefore $\bar{F}_\alpha$ cannot be a coboundary, and the cohomology class $\beta=[\bar{F}_\alpha]$ is nonzero.
\end{proof}

We can consider now the $n$-th tensor product $\Z^{\ot n}$ as a deformation of $Z^{\ot n}$, parametrized by $\Spec\left((Z')^{\ot n}\right)$.  We let $\g_a$ denote the Lie algebra of $\G_{a}$ so that 
\[
(\g_a^{(r)})^n=H^1((Z')^{\ot n},k)=\Hom_{\mathrm{Alg}}((Z')^{\ot n},k[\e]),
\]
with each element $\sum_{i=1}^nc_i\alpha_i\in (\g_a^{(r)})^n$ corresponding to the algebra map
\[
\sum_ic_i\alpha_i:(Z')^{\ot n}\to k[\e],\ \ t_i\mapsto c_i\e.
\]
Here $\alpha_i$ is the basis vector for the $i$-th copy of $\g_a^{(r)}$, defined as above, and $t_i$ is the generator of the $i$-th factor in $(Z')^{\ot n}$.

\begin{proposition}\label{prop:439}
The map $\sigma_{\Z^{\ot n}}:(\g_a^{(r)})^n\to H^2(Z^{\ot n},k)$ induces an injective graded $k$-algebra map
\[
\sigma'_{\Z^{\ot n}}:S\left((\g_a^{(r)})^n[2]\right) \to H^*(Z^{\ot n} ,k)
\]
which is an isomorphism modulo nilpotents.
\end{proposition}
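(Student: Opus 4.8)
The plan is to reduce the statement to an explicit description of $H^\ast(Z^{\ot n},k)$ together with a precise identification of the classes $\sigma_{\Z^{\ot n}}(\alpha_i)$. First I would record the cohomology of a single factor: since $Z=k[t]/(t^{p^r})$ with $p^r>2$ (as $p$ is odd), the standard computation $H^\ast(k[t]/(t^{p^r}),k)\cong H^\ast(\mathbb Z/p^r,k)$ (cf.\ the Remark following Proposition~\ref{prop:381}) gives $H^\ast(Z,k)\cong k[\lambda]/(\lambda^2)\ot k[\eta]$ with $\lambda$ in degree $1$ and $\eta$ in degree $2$; in particular $H^2(Z,k)=k\eta$ and $H^\ast(Z,k)_{\mathrm{red}}=k[\eta]$. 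By Lemma~\ref{lem:349} the class $\beta=\sigma_\Z(\alpha)$ is nonzero, so $\beta=c\eta$ for some $c\in k^\times$. The Künneth isomorphism for $\Ext$ over the field $k$ (tensor products of minimal resolutions) then yields a graded-algebra isomorphism $H^\ast(Z^{\ot n},k)\cong H^\ast(Z,k)^{\ot n}$; writing $\lambda_i,\eta_i$ for the images of $\lambda,\eta$ coming from the $i$-th tensor factor, we obtain $H^\ast(Z^{\ot n},k)\cong \big(k[\lambda_1,\dots,\lambda_n]/(\lambda_i^2)\big)\ot k[\eta_1,\dots,\eta_n]$, so that $H^\ast(Z^{\ot n},k)_{\mathrm{red}}$ is the polynomial ring $k[\eta_1,\dots,\eta_n]$.

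Next I would identify $\sigma_{\Z^{\ot n}}(\alpha_i)$ by exploiting naturality of the deformation map $\sigma$, exactly as in the proof of Proposition~\ref{prop:g2HD}. For each $i$ the $i$-th projection $\G_{a(r+1)}^n\to\G_{a(r+1)}$ is a morphism of group schemes compatible with the Frobenius quotients, so it dualizes to a commutative square of Hopf-algebra maps relating the deformation $\Z$ of $Z$ (over $\Spec Z'$) to the deformation $\Z^{\ot n}$ of $Z^{\ot n}$ (over $\Spec(Z')^{\ot n}$), the map on special fibers being the $i$-th factor inclusion $Z\to Z^{\ot n}$. Under the induced map $T_p\Spec(Z')^{\ot n}\to T_p\Spec Z'$ the class $\alpha_j$ maps to $\alpha$ if $j=i$ and to $0$ otherwise, while the induced map on cohomology is the restriction $\mathrm{res}_i:H^\ast(Z^{\ot n},k)\to H^\ast(Z,k)$, which under Künneth kills $\lambda_l$ and $\eta_l$ for $l\neq i$ and sends $\lambda_i\mapsto\lambda$, $\eta_i\mapsto\eta$. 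Naturality of $\sigma$ therefore gives
\[
\mathrm{res}_i\big(\sigma_{\Z^{\ot n}}(\alpha_j)\big)=\sigma_\Z(\delta_{ij}\,\alpha)=\delta_{ij}\,\beta .
\]
Writing $\sigma_{\Z^{\ot n}}(\alpha_j)=\sum_l d_{jl}\,\eta_l+(\text{a combination of the }\lambda_l\lambda_m)$ in $H^2(Z^{\ot n},k)$ and applying $\mathrm{res}_i$ forces $d_{ji}=\delta_{ij}\,c$ for all $i$; hence the image of $\sigma_{\Z^{\ot n}}(\alpha_j)$ in $H^2(Z^{\ot n},k)_{\mathrm{red}}$ is $c\,\eta_j$, with $c\neq 0$. (Alternatively, the base change defining $\sigma_{\Z^{\ot n}}(\alpha_i)$ splits as an external tensor product of the Bockstein deformation $\Z$ in the $i$-th slot with trivial deformations in the other slots, and a direct expansion of the multiplication on a tensor product of deformations identifies its Hochschild, hence Hopf, $2$-cocycle with the one representing $1\ot\cdots\ot\beta\ot\cdots\ot 1=c\,\eta_i$, giving the exact equality $\sigma_{\Z^{\ot n}}(\alpha_i)=c\,\eta_i$.)

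Finally I would conclude as follows. The induced graded-algebra map $\sigma'_{\Z^{\ot n}}:S\big((\g_a^{(r)})^n[2]\big)=k[\alpha_1,\dots,\alpha_n]\to H^\ast(Z^{\ot n},k)$, composed with the quotient onto $H^\ast(Z^{\ot n},k)_{\mathrm{red}}=k[\eta_1,\dots,\eta_n]$, is the isomorphism $\alpha_j\mapsto c\,\eta_j$. Since this composite is injective, $\sigma'_{\Z^{\ot n}}$ is injective; and since the source $S\big((\g_a^{(r)})^n[2]\big)$ is reduced and the composite is an isomorphism onto $H^\ast(Z^{\ot n},k)_{\mathrm{red}}$, the map $\sigma'_{\Z^{\ot n}}$ is an isomorphism modulo nilpotents.

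I expect the main obstacle to be the middle step — the cocycle bookkeeping needed to justify the naturality of $\sigma$ that is used there (or, on the alternative route, the base-change identification together with the cocycle-level comparison under the Künneth isomorphism). The mechanism is precisely the one already used in the proof of Proposition~\ref{prop:g2HD}, namely choosing the multiplication cochains compatibly so that they restrict correctly, but some care is required to see that the choices of $k$-linear splittings do not introduce spurious coboundaries. The first and last steps are essentially formal once the computation of $H^\ast(k[t]/(t^{p^r}),k)$ and the Künneth formula are available.
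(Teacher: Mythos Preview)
Your proposal is correct and follows essentially the same route as the paper: both arguments identify $H^\ast(Z^{\ot n},k)_{\mathrm{red}}$ with a polynomial ring via K\"unneth and then pin down $\sigma_{\Z^{\ot n}}$ by restricting along the factor inclusions $Z_j\to Z^{\ot n}$, reducing to the one-variable case of Lemma~\ref{lem:349}. The only cosmetic difference is that the paper carries out the restriction by an explicit cocycle computation (writing down $F_{\underline{c}}|_{Z_j\ot Z_j}$) rather than packaging it as a naturality statement in the style of Proposition~\ref{prop:g2HD}; the ``compatible splittings'' issue you flag is exactly what that explicit computation handles.
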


\begin{proof}
We claim that the reduction $\sigma_{\mathrm{red}}:(\g_a^{(r)})^n\to H^2(Z^{\ot n},k)_{\mathrm{red}}$ is injective.  (Here by $H^2(Z^{\ot n},k)_{\mathrm{red}}$ we mean the degree $2$ portion of the reduced algebra, and by $\sigma_\mrm{red}$ we mean the composite of $\sigma_{\Z^{\ot n}}$ with the reduction.)  It suffices to show that for any nonzero $\underline{c}=\sum_ic_i\alpha_i$ there is an index $j$ such that restriction along the factor $Z_j\to Z^{\ot n}$ produces a nonzero element in the cohomology $H^\ast(Z_j,k)_{\mathrm{red}}$, via the composite
\[
(\g_a^{(r)})^n\overset{\sigma}\to H^\ast(Z^{\ot n},k)_{\mathrm{red}}\to H^\ast(Z_j,k)_{\mathrm{red}}\cong k[\beta_j].
\]
For any such $\underline{c}\in (\g_a^{(r)})^n$ let $Def_{\underline{c}}$ denote the corresponding deformation $\Z^{\ot n}\ot_{(Z')^{\ot n}}k[\e]$, where we change base along the corresponding map $(Z')^{\ot n}\to k[\e]$, $t_i\mapsto c_i\e$.
\par

Consider such a nonzero $\underline{c}$ and take $j$ such that the $j$-th entry $c_j$ is nonzero.  We claim that the image of the corresponding class $\sigma_{\Z^{\ot n}}(\underline{c})\in H^2(Z^{\ot n},k)$ in $H^2(Z_j,k)$ is exactly the class $c_j\beta_j\in H^2(Z_j,k)$.  One way to see this is to note that the Hochschild 2-cocycle corresponding to $Def_{\underline{c}}$ is a function $F_{\underline{c}}:Z^{\ot n}\ot Z^{\ot n}\to Z^{\ot n}$ with restriction
\[
F_{\underline{c}}:Z_j\ot Z_j\to Z^{\ot n},\ \ t_j^l\ot t_j^m\mapsto c_jt_j^{(l+m)-p^r},
\]
where a negative power is considered to be $0$.  (This is just as in Lemma~\ref{lem:349}.)  Composing with the counit produces the function $\bar{F}_{c_j}:t^l\ot t^m\mapsto c_j\delta_{l+m,p^r}$.  The function $\bar{F}_{c_j}$ is equal to $c_j\bar{F}_{\alpha_j}$, where $\bar{F}_{\alpha_j}$ is as in the proof of Lemma~\ref{lem:349}, and we can consult the proof of Lemma~\ref{lem:349} again to see that $[\bar{F}_{c_j}]=c_j[\bar{F}_{\alpha_j}]=c_j\beta_j$.
\par

Upon choosing coordinates of $Z^{\ot n}$ to obtain the identification 
\[
H^\ast(Z^{\ot n},k)_{\mathrm{red}}\cong \left(\otimes_{i=1}^{n} H^\ast(Z_i,k)\right)_{\mathrm{red}}\cong k[\beta_1,\dots, \beta_n],
\]
we easily see that the reduced algebra has dimension $n$ in degree $2$.  So injectivity of $\sigma_{\mathrm{red}}$ implies that $\sigma_{\mathrm{red}}$ is an isomorphism.  Consequently, the algebra map $\sigma'_{\mathrm{red}}$ (the multiplicative extension of
$\sigma_{\mathrm{red}}$)  is an isomorphism.  As a consequence, $\sigma'_{\mathscr{Z}^{\ot n}}$ must be injective as well.
\end{proof}

\subsection{The proof of Proposition~\ref{prop:381}}

We retain our notations $\Z$ and $Z$ from above, and take also $\O=\O(\G_{(r)})$.

\begin{proof}[Proof of Proposition~\ref{prop:381}]
The identification of $H^1(\O,k)$ with $\Hom_k(m_G/m_G^2,k)$ implies that $\g=H^1(\O,k)$.  Invariance of the image of $\g^{(r)}$ follows from Corollary~\ref{cor:g2inv}.  Whence the algebra map $\wedge^\ast(\g[1])\ot S(\g^{(r)}[2])\to H^\ast(\O,k)$ of~\eqref{eq:374} is one of $\G_{(r)}$-algebras.  It remains to show that the map is a (linear) isomorphism.
\par
  
Since $\G$ is smooth, we can choose complete local coordinates $\{x_i\}_i$ at the identity to get algebra presentations 
\[
\O_{\mathrm{nat}}=\O(\G_{(r+1)})=k[x_1,\dots,x_n]/(x_i^{p^{r+1}})\text{ and }\O=k[x_1,\dots,x_n]/(x_i^{p^r}).
\]
Whence we have an algebra isomorphism $Z^{\ot n}\overset{\cong}\to \O$, $t_i\mapsto x_i$, under which the deformations $\Z^{\ot n}$ and $\O_{\mathrm{nat}}$ can be identified.  Thus the maps $\sigma_{\Z^{\ot n}}$ and $\sigma_\O$ are also identified, and we see that $\sigma'_\O:S(\g^{(r)}[2])\to H^\ast(\O,k)$ is an isomorphism modulo nilpotents by Proposition~\ref{prop:439}.
\par

Since we know abstractly that
\[
H^\ast(\O,k)=\wedge^\ast(H^1(\O,k))\ot S(V)=\wedge^\ast(\g)\ot S(V),
\]
for any vector space complement $V$ to $\wedge^2\g$ in $H^2(\O,k)$, it suffices to show that $\sigma_\O(\g^{(r)})$ is a complement to the second wedge power of $\g$.  However, this follows from the facts that $\sigma_\O'$ is an isomorphism modulo nilpotents and that the kernel of the reduction $H^2(\O,k)\to H^2(\O,k)_\mathrm{red}$ is exactly $\wedge^2H^1(\O,k)=\wedge^2\g$.  
\end{proof}

\subsection{In characteristic $2$}
\label{sect:2}

Suppose $\mathrm{char}(k)=2$ and let $\G$ be a smooth algebraic group over $k$.  Consider the $r$-th Frobenius kernel $\G_{(r)}$ with $r>1$.  In this case we have an algebra identification 
\[
\O(\G_{(r)})=k[x_1,\dots, x_n]/(x_1^{2^r},\dots,x_n^{2^r})=\otimes_{i=1}^n k[x_i]/(x_i^{2^r}).
\]
Furthermore, since $H^\ast\left(k[x]/(x^{2^r}),k\right)=k[a,b]/(a^2)$, where $\deg(a)=1$ and $\deg(b)=2$, we see that all elements in $H^1(\O(\G_{(r)}),k)$ are square zero.  Hence we can construct an algebra map
\begin{equation}\label{eq:594}
\wedge^\ast(\g[1])\ot S(\g^{(r)}[2])\to H^\ast(\O(\G_{(r)}),k)
\end{equation}
via the identification $\g=H^1(\O(\G_{(r)}),k)$ and the deformation map $\sigma_\O$, just as before.  The above proof of Proposition~\ref{prop:381} can now be repeated verbatim to arrive at

\begin{proposition}\label{prop:586}
When $\mathrm{char}(k)=2$ and $r>1$, the algebra map~\eqref{eq:594} is an isomorphism of $\G_{(r)}$-algebras.
\end{proposition}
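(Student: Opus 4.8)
The plan is to observe that the characteristic-$2$ situation for $r>1$ is structurally identical to the odd-characteristic case treated in Section~\ref{sect:proof}, with the single algebraic input being the computation $H^\ast(k[x]/(x^{2^r}),k)=k[a,b]/(a^2)$ with $\deg(a)=1$, $\deg(b)=2$. The key point that makes the odd-$p$ argument run is that $H^1(\O,k)=\g$ consists of square-zero elements, so that the symmetric-plus-exterior target $\wedge^\ast(\g[1])\ot S(\g^{(r)}[2])$ is the right abstract model for $H^\ast(\O,k)$. This fails for $k[x]/(x^2)$ (where the degree-one generator squares to the degree-two generator), which is precisely why the hypothesis $r>1$ is imposed; with $r>1$ we have $2^r\geq 4$ and the exterior relation $a^2=0$ holds.

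\begin{proof}[Proof of Proposition~\ref{prop:586}]
First I would note that since $\mathrm{char}(k)=2$ and $r>1$ we have $2^r\geq 4$, so the cited computation gives $H^\ast(k[x_i]/(x_i^{2^r}),k)=k[a_i,b_i]/(a_i^2)=\wedge^\ast(a_i)\ot k[b_i]$ with $\deg a_i=1$, $\deg b_i=2$. Using the smoothness of $\G$ to fix complete local coordinates $\{x_i\}$ at the identity as in Section~\ref{sect:HO}, we obtain the tensor-product decomposition $\O(\G_{(r)})=\otimes_{i=1}^n k[x_i]/(x_i^{2^r})$ stated in the excerpt, and the K\"unneth theorem yields an abstract isomorphism $H^\ast(\O(\G_{(r)}),k)\cong\wedge^\ast(\g)\ot S(V)$ for $V$ any complement of $\wedge^2\g$ in $H^2(\O,k)$; in particular every class in $H^1(\O(\G_{(r)}),k)=\g$ squares to zero, so the algebra map~\eqref{eq:594} is well-defined exactly as in the odd case.

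Next I would invoke the results of Section~\ref{sect:proof} essentially verbatim. The deformation $\Z=\O(\G_{a(r+1)})=k[t]/(t^{2^{r+1}})$ of $Z=k[t]/(t^{2^r})$ parametrized by $Z'=\O(\G_{a(1)}^{(r)})$ still makes sense; the proof of Lemma~\ref{lem:349} only used that $2^r\geq q>1$ and the vanishing $f(t^{2^r})=f(0)=0$, so $\sigma_\Z$ is again a linear isomorphism in degree $1\to 2$. (Note the cohomology groups $H^i(k[t]/(t^q),k)$ are one-dimensional for all $i$ regardless of characteristic.) Proposition~\ref{prop:439} then goes through unchanged over the $n$-fold tensor power: the restriction-to-a-factor argument identifying $\sigma_{\Z^{\ot n}}(\underline c)$ with $c_j\beta_j$ in $H^\ast(Z_j,k)$ is characteristic-free, and the count showing $H^2(Z^{\ot n},k)_{\mathrm{red}}$ has dimension $n$ uses only the K\"unneth decomposition $H^\ast(Z^{\ot n},k)_{\mathrm{red}}\cong k[\beta_1,\dots,\beta_n]$, which here holds because each $H^\ast(Z_i,k)_{\mathrm{red}}=k[\beta_i]$. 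Hence $\sigma'_\O:S(\g^{(r)}[2])\to H^\ast(\O(\G_{(r)}),k)$ is injective and an isomorphism modulo nilpotents.

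Finally I would conclude as in the proof of Proposition~\ref{prop:381}: invariance of the image of $\g^{(r)}$ under $\G_{(r)}$ follows from Corollary~\ref{cor:g2inv} (whose proof is formal and characteristic-independent, resting on Proposition~\ref{prop:g2HD}), so~\eqref{eq:594} is a map of $\G_{(r)}$-algebras; and since the kernel of the reduction $H^2(\O,k)\to H^2(\O,k)_{\mathrm{red}}$ is exactly $\wedge^2 H^1(\O,k)=\wedge^2\g$ while $\sigma_\O(\g^{(r)})$ maps isomorphically onto a complement of $\wedge^2\g$ in $H^2(\O,k)_{\mathrm{red}}$, the combined map $\wedge^\ast(\g[1])\ot S(\g^{(r)}[2])\to H^\ast(\O,k)$ is a linear, hence algebra, isomorphism. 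The only genuine subtlety—and the place where one must be careful—is the exclusion of the case $r=1$: there $\O(\G_{(1)})$ is a tensor product of copies of $k[x]/(x^2)$, the exterior relation $a^2=0$ is false, and the target $\wedge^\ast(\g[1])\ot S(\g^{(r)}[2])$ no longer models $H^\ast(\O,k)$; so I would make explicit in the write-up that $r>1$ is used precisely to guarantee $H^1(\O(\G_{(r)}),k)$ is square-zero.
\end{proof}
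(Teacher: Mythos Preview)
Your proposal is correct and matches the paper's approach exactly: the paper simply states that the proof of Proposition~\ref{prop:381} can be repeated verbatim, and you have spelled out why each ingredient (Lemma~\ref{lem:349}, Proposition~\ref{prop:439}, Corollary~\ref{cor:g2inv}, and the final assembly) survives in characteristic~$2$ once $r>1$ guarantees the square-zero relation on $H^1$. One tiny slip: in your last paragraph, ``a complement of $\wedge^2\g$ in $H^2(\O,k)_{\mathrm{red}}$'' should read ``in $H^2(\O,k)$''.
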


Under these same hypotheses all proofs in Sections~\ref{sect:fg}--\ref{sect:supp} also apply verbatim.  Hence we are able to deal with these cases without any deviation in our presentation.
\par

\begin{remark}
When $\mathrm{char}(k)=2$ and $r=1$, the algebra map $S(\g[1])\to H^\ast(\O,k)$ induced by the identification $\g=H^1(\O,k)$ is an isomorphism.  The methods employed in the proof of Proposition~\ref{prop:381} show that, in this case,
\[
\sigma'_\O:S(\g^{(1)}[2])\to H^\ast(\O,k)
\]
is the Frobenius.
\par

Now, in degree $2$ we have an exact sequence of $\G_{(1)}$-representations $0\to \g^{(1)}\to S^2(\g)=H^2(\O,k)\to M\to 0$, where $M=\mrm{coker}(\sigma_\O)$.  The possible failure of this sequence to split over $\G_{(1)}$ obstructs our proof of Theorem~\ref{thm:fingen} below.  In particular, it is not apparent how one can construct the complement $\Gamma$ to $S(\g^{(1)}[2])$ employed in the proof of the aforementioned theorem.
\end{remark}
 
\section{Finite generation of cohomology}
\label{sect:fg}

We consider a smooth group scheme $\G$ and an integer $r > 0$.  As always, $\G$ is assumed to be affine of finite type over $k$.  In Theorems~\ref{thm:fingen} and~\ref{thm:fingen2} below, we prove finite generation of cohomology for the Drinfeld double $\D\G_{(r)} \equiv \O(\G_{(r)}) \# k\G_{(r)}$ of the $r$-th Frobenius kernel $\G_{(r)}$.  Our technique is to use the Grothendieck spectral sequence~\cite{grothendieck57} as in~\cite{friedlandersuslin97}.

\subsection{A spectral sequence for the cohomology of the double}
We begin with a general result.

\begin{proposition}
\label{prop:Groth}
Let $F: \cA \ \to \ \cB, \ G: \cB \ \to \ \cC$ be additive, left exact functors between abelian categories with enough injectives
and suppose that $F$ sends injective objects of $\cA$ to injective objects of $\cB$.
Assume further that $\cA$, $\cB$, $\cC$ have tensor products and that $F$, $G$ are equipped with natural maps
$F(V)\otimes F(V^\prime) \to F(V \otimes V^\prime), \quad G(W)\otimes G(W^\prime) \to G(W \otimes W^\prime)$.
Then for any pairing $V \otimes V^\prime \to V^{\prime\prime}$ there exists a pairing  of Grothendieck spectral sequence
$$\{ R^sG(R^t(F(V))) \ \Rightarrow R^{s+t}(G\circ F)(V) \} \ \otimes \ \{ R^{s^\prime}G(R^{t^\prime}(F(V^\prime))) 
\ \Rightarrow R^{s^\prime+t^\prime}(G\circ F)(V^\prime) \} $$
$$ \quad \to \ \{ R^{s^{\prime\prime}}G(R^{t^{\prime\prime}}(F(V^{\prime\prime}))) \ \Rightarrow
R^{s^{\prime\prime}+t^{\prime\prime}}(G\circ F)(V^{\prime\prime}) \}.$$
\end{proposition}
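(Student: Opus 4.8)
The plan is to construct the pairing of spectral sequences by working at the level of injective resolutions and the Cartan--Eilenberg double complex that computes the derived functors of a composite. The standard reference construction of the Grothendieck spectral sequence associates to $V$ an injective resolution $V\to I^\bullet$ in $\cA$, then a Cartan--Eilenberg resolution of the complex $F(I^\bullet)$ in $\cB$, i.e. a double complex $J^{\bullet,\bullet}$ of injectives with $F(I^\bullet)\to J^{\bullet,0}$ a quasi-isomorphism in each column; applying $G$ to the total complex and filtering by the two degrees yields the two spectral sequences, one of which degenerates (since $F$ preserves injectives) to give the abutment $R^\bullet(G\circ F)(V)$, while the other has $E_2$-page $R^sG(R^tF(V))$. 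So the first step is to fix such data $(I^\bullet_V, J^{\bullet,\bullet}_V)$ for each of $V, V', V''$.

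The next step is to produce compatible multiplicative structure on these resolutions. Given injective resolutions $V\to I^\bullet_V$ and $V'\to I^\bullet_{V'}$, the tensor product $I^\bullet_V\ot I^\bullet_{V'}$ is a (not necessarily injective) resolution of $V\ot V'$, and by the standard comparison theorem for injective resolutions the map $V\ot V'\to V''$ together with $V''\to I^\bullet_{V''}$ lifts to a chain map $I^\bullet_V\ot I^\bullet_{V'}\to I^\bullet_{V''}$, unique up to homotopy. Applying $F$ and composing with the assumed natural map $F(-)\ot F(-)\to F(-\ot -)$ gives a chain map $F(I^\bullet_V)\ot F(I^\bullet_{V'})\to F(I^\bullet_{V''})$ in $\cB$. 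Now one lifts this to a map of Cartan--Eilenberg resolutions $J^{\bullet,\bullet}_V\ot J^{\bullet,\bullet}_{V'}\to J^{\bullet,\bullet}_{V''}$; this is again a matter of the comparison theorem applied column-by-column, using that a Cartan--Eilenberg resolution is functorial up to homotopy. Finally apply $G$, compose with $G(-)\ot G(-)\to G(-\ot -)$, and pass to total complexes: this yields a map of filtered complexes $\mathrm{Tot}(GJ_V)\ot\mathrm{Tot}(GJ_{V'})\to\mathrm{Tot}(GJ_{V''})$ respecting the filtration by second degree (up to the usual sign/shift conventions in the tensor product of filtered complexes), hence a pairing on the associated spectral sequences from $E_1$ onward, converging to a pairing on the abutments $R^\bullet(G\circ F)$.

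The main obstacle, and the place requiring care rather than new ideas, is the bookkeeping of the double-complex/Cartan--Eilenberg construction: one must check that the lift $J_V\ot J_{V'}\to J_{V''}$ can be chosen compatibly with the augmentations from $F(I^\bullet)$, so that on the degenerate spectral sequence the induced pairing really is the one computing the product on $R^\bullet(G\circ F)(V)$, and that the filtration on $\mathrm{Tot}(GJ_V)\ot\mathrm{Tot}(GJ_{V'})$ maps into that on $\mathrm{Tot}(GJ_{V''})$. One should also note that tensoring with a fixed object is not exact in general, so the tensor product of two injective resolutions is not automatically a resolution of the tensor product — but this is harmless here since we only ever use it as the source of a comparison map into a genuine injective resolution, not as a resolution in its own right.

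A lighter-weight alternative, which I would mention if the double-complex argument becomes unwieldy, is to invoke a categorical ``multiplicative structures on Grothendieck spectral sequences'' machine (as in the treatments via filtered derived categories): a lax monoidal structure on $F$ and on $G$, together with $F$ preserving injectives, formally induces the pairing of the composite-functor spectral sequences. But since the intended applications are concrete — Lyndon--Hochschild--Serre-type sequences for the Hopf projection $\D\G_{(r)}\to k\G_{(r)}$ — I expect the hands-on resolution-level construction above to be the cleanest to state and the easiest for the reader to verify.
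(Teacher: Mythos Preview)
Your approach is essentially the same as the paper's: both build the pairing by choosing injective resolutions, lifting the pairing $V\otimes V'\to V''$ to a chain map $\mathrm{Tot}(I^\bullet\otimes I'^{\bullet})\to I''^{\bullet}$, then lifting again to the Cartan--Eilenberg double complexes, applying $G$, and obtaining a pairing of filtered total complexes. The one substantive difference is in the last step. You assert that a filtration-compatible pairing of complexes yields a pairing of the associated spectral sequences; the paper makes this precise by passing to the Massey exact couple and explicitly verifying Massey's condition $\mu_n$ (from \cite{massey54}) for the resulting pairing of exact couples. Your version is fine if you are willing to cite a standard reference for multiplicative structures on spectral sequences of filtered complexes; the paper's version is slightly more self-contained.

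One small correction: your justification for why the non-exactness of $\otimes$ is harmless is not quite right. The comparison theorem for maps into an injective complex \emph{does} require the source complex to be acyclic in positive degrees, so you cannot simply say ``we only use it as the source of a comparison map.'' In the intended application all categories are module categories over a field, so $\otimes$ is exact and $I^\bullet_V\otimes I^\bullet_{V'}$ genuinely is a resolution of $V\otimes V'$; the paper tacitly relies on this as well. If you want the statement in the generality you wrote, you should add exactness of the tensor product as a hypothesis.
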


\begin{proof}
The Grothendieck spectral sequence for the composition of left exact functors between abelian categories with enough injectives,
$G \circ F: \cA \to \cB \to \cC$, arises from a Massey exact couple.   Namely, one
takes an injective resolution $V \to I^\b$ of an object of $V$ of $\cA$, and then takes a Cartan-Eilenberg resolution 
$F(I^\b) \to J^{\b,\b}$ of the cochain complex $F(I^\b)$; $J^{\b,\b}$ is a double complex of injective objects of $\cB$
which not only gives an injective resolution of each $F(I^n)$ but also of each $H^n(F(I^\b))$.  Then the Massey exact
couple is given by ``triples" $(i:D \to D, j:D \to E, k: E \to D)$,
$$ \cdots \stackrel{k}{\to} D = \bigoplus_p H^{p+q}(F^{p+1}(Tot(G(J^{*,*}))) )\stackrel{i}{\to}  \bigoplus_p D = H^{p+q}(F^p(Tot(G(J^{*,*})))) $$
$$\stackrel{j}{\to} E = \bigoplus_{p,q} H^{p+q}(F^p(Tot(G(J^{*,*})))/F^{p+1}(Tot(G(J^{*,*})))) \stackrel{k}{\to} \cdots ,$$
where $F^p((Tot(G(J^{*,*})))  = Tot(G(\oplus_{i \geq p} (J^{i,*})))$.

Assuming that $\mc{A}, \mc{B}$ and $\mc{C}$ have tensor products, a paring of objects in $\mc{A}$ gives 
rise to a pairing of Massey exact couples.   Namely, given injective resolutions $V \to I^\b, \ V^\prime \to I^{\prime \b},
\ V^{\prime\prime} \to I^{\prime\prime \b}$ and a pairing $V \otimes V^\prime \to V^{\prime\prime}$, then the usual
extension argument for the injective complex $I^{\prime\prime \b}$ tells us that there is a map of cochain complexes
$Tot(I^\b \otimes I^{\prime\b}) \to I^{\prime\prime \b}$, unique up to chain homotopy, extending this pairing.  
This, in turn, determines a pairing of bicomplexes
$G(J^{*,*}) \otimes G(J^{\prime *,*}) \to G(J^{\prime\prime *,*})$
and thus of filtered total complexes.  The pairing on exact couples takes
the expected form using the natural map
$\oplus_{s+t=n}(H^s(C^*) \otimes H^t(C^{\prime *})) \to H^{s+t}(C^*\otimes C^{\prime *})$

In~\cite{massey54}, Massey gives sufficient conditions for a pairing of exact couples to determine a pairing of 
spectral sequences (see also~\cite{friedlandersuslin97}).  The essential condition is Massey's
condition $\mu_n$ for each $n\geq0$: for $z\otimes z^\prime$ bihomogeneous in $E\otimes E^\prime$
and any $x \otimes x^\prime$ bihomogeneous in $D\otimes D^\prime$ such that $k(z) = i^n(x), \ k(z^\prime) = (i^\prime)^n(x^\prime)$,
there exists $x^{\prime\prime} \in D^{\prime\prime}$ with $k^{\prime\prime}(z\cdot z^\prime) = (i^{\prime\prime})^n(x^{\prime\prime})$ and
$j^{\prime\prime}(x^{\prime\prime}) = j(x)\cdot z^\prime + (-1)^{deg(z)}z\cdot j^\prime(x^\prime)$.
In our context, $z\otimes z^\prime \in H^{p+q}(F^p/F^{p+1}) \otimes H^{p^\prime+q^\prime}(F^{p^\prime}/F^{p^\prime+1})$
and $x\otimes x^\prime \in H^{p+q+1}(F^{p+n+1}) \otimes H^{p^\prime+q^\prime+1}(F^{p^\prime+n+1})$.  To satisfy
condition $\mu_n$, we take $x^{\prime\prime} \in H^{p+q+p^\prime + q^\prime}(F^{p+p^\prime +n+1})$ to 
be the image of $x\otimes x^\prime$ given by the pairing map.
\end{proof}

Recall that $\D\G_{(r)}/\O(\G_{(r)})$ is $k\G_{(r)}$-Galois as in~\cite{montgomery93}.  One can view this property as the condition
that $\D\G_{(r)}$ is a $k\G_{(r)}$ torsor (in the context of Hopf algebras) over $\O(\G_{(r)})$: there is a natural bijection 
$\D\G_{(r)} \otimes_{\O(\G_{(r)})} \D\G_{(r)} \to \D\G_{(r)} \otimes k\G_{(r)}$.   By normality of $\O(\mbb{G}_{(r)})$ in $\D\G_{(r)}$, for any $\D\G_{(r)}$-module $V$ on which $\O(\mbb{G}_{(r)})$ acts trivially we have $V^{\D\G_{(r)}}=V^{\G_{(r)}}$.  Hence the invariants functor for $\D\G_{(r)}$ factors
\[
\Hom_{\D\G_{(r)}}(k,-) \ = \ \Hom_{k\G_{(r)}}(k,-) \circ \Hom_{\O(\G_{(r)})}(k,-): \rep(\D\G_{(r)}) \ \to \ Vect.
\]
\begin{proposition}
\label{prop:specseq}
The above composition of functors leads to a Grothendieck spectral sequence
of $k$-algebras
\begin{equation}
\label{specseq-k}
E_2^{s,t}(k) = H^s(\G_{(r)},H^t(\O(\G_{(r)}),k)) \ \Rightarrow \ H^{s+t}(\D\G_{(r)},k). 
\end{equation}
For any $\D\G_{(r)}$-module $M$, the above composition of functors leads to Grothendieck spectral sequence
\begin{equation}
\label{specseq-M}
E_2^{s,t}(M) = H^s(\G_{(r)},H^t(\O(\G_{(r)}),M)) \ \Rightarrow \ H^{s+t}(\D\G_{(r)},M) 
\end{equation}
which is a spectral sequence of modules over (\ref{specseq-k}).
\end{proposition}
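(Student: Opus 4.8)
The plan is to realize both spectral sequences, together with their multiplicative structure, as instances of the Grothendieck spectral sequence of Proposition~\ref{prop:Groth}, applied to the factorization of the $\D\G_{(r)}$-invariants functor recorded above. Write $\O=\O(\G_{(r)})$ and $\D=\D\G_{(r)}$, and take $\cA=\rep(\D)$, $\cB=\rep(k\G_{(r)})$, $\cC=Vect$; all three are abelian with enough injectives and carry $k$-linear tensor products. Put $F=(-)^{\O}=\Hom_\O(k,-)$ and $G=(-)^{\G_{(r)}}=\Hom_{k\G_{(r)}}(k,-)$: both are additive and left exact. The functor $F$ lands in $\cB$ precisely because $\O$ is normal in $\D$, so $M^\O$ is a $\D$-submodule of $M$ on which $\O^+$, hence $\D\O^+$, acts by zero, i.e.\ a module over $\D//\O\cong k\G_{(r)}$; and $G\circ F=\Hom_\D(k,-)$ by the displayed factorization.

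First I would verify the hypothesis of Proposition~\ref{prop:Groth} that $F$ sends injectives of $\cA$ to injectives of $\cB$. To this end, note that the inflation functor $\mathrm{infl}\colon\rep(k\G_{(r)})\to\rep(\D)$ along the Hopf surjection $\D\twoheadrightarrow k\G_{(r)}$ is exact (it is the identity on underlying vector spaces) and left adjoint to $F$: using normality again, a $\D$-linear map $\mathrm{infl}(N)\to M$ automatically takes values in $M^\O$ and is the same datum as a $k\G_{(r)}$-linear map $N\to M^\O$. A right adjoint of an exact functor preserves injectives, so $F$ does. The Grothendieck spectral sequence then reads $R^sG(R^tF(M))\Rightarrow R^{s+t}(G\circ F)(M)$, and it remains to identify the terms: $R^sG(-)=H^s(\G_{(r)},-)$ and $R^n(G\circ F)(M)=\Ext^n_\D(k,M)=H^n(\D\G_{(r)},M)$ by definition, while $R^tF(M)=H^t(\O,M)$ with its usual $k\G_{(r)}$-module structure. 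For the latter I would argue that restriction $\rep(\D)\to\rep(\O)$ is exact and preserves injectives — injective $\D$-modules are projective, $\D$ being a finite-dimensional, hence self-injective, Hopf algebra, so they are summands of free $\D$-modules, which restrict to free $\O$-modules since $\D=\O\#k\G_{(r)}$ is free over $\O$ on the left, and free $\O$-modules are again injective as $\O$ is self-injective. Thus a $\D$-injective resolution of $M$ restricts to an $\O$-injective one, so applying $(-)^\O$ and taking cohomology computes $H^t(\O,M)$ as a vector space, and the residual $k\G_{(r)}$-action on the resulting complex endows $R^tF(M)$ with the conjugation $k\G_{(r)}$-module structure on $H^t(\O(\G_{(r)}),M)$. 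This yields the spectral sequence~\eqref{specseq-M}, and~\eqref{specseq-k} is its special case $M=k$.

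Finally, to obtain the multiplicative structure I would equip $F$ and $G$ with the natural maps $F(V)\otimes F(V')\to F(V\otimes V')$ and $G(W)\otimes G(W')\to G(W\otimes W')$ sending a tensor of invariant vectors to itself; these are well defined because $\O$ and $k\G_{(r)}$ act on tensor products through their coproducts and the counit axiom forces a tensor of invariants to be invariant. Proposition~\ref{prop:Groth} then converts the multiplication $k\otimes k\to k$ into a self-pairing of~\eqref{specseq-k}, which is the asserted algebra structure — on $E_2$ it is the cup product assembled from those of $\G_{(r)}$- and $\O$-cohomology, and on the abutment it recovers the algebra structure of $H^\ast(\D\G_{(r)},k)$, associativity and unitality being routine — and it converts the action $k\otimes M\to M$ into a pairing of~\eqref{specseq-k} with~\eqref{specseq-M} landing in~\eqref{specseq-M}, exhibiting~\eqref{specseq-M} as a spectral sequence of modules over~\eqref{specseq-k}, compatibly with the $H^\ast(\D\G_{(r)},k)$-module structure on $H^\ast(\D\G_{(r)},M)$. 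I expect the only real subtlety to be the identification of $R^tF(M)$ with $H^t(\O,M)$ \emph{as} a $k\G_{(r)}$-module — i.e.\ matching the residual action on the $\O$-invariants of a $\D$-injective resolution with the conjugation action defining the coefficient module structure on $H^t(\O(\G_{(r)}),M)$; this is the standard Lyndon--Hochschild--Serre comparison, after which everything else is formal, resting on Proposition~\ref{prop:Groth} together with the normality and freeness of $\O$ inside $\D$.
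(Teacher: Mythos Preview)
Your proof is correct and follows essentially the same approach as the paper: both apply Proposition~\ref{prop:Groth} to the factorization $\Hom_{\D\G_{(r)}}(k,-)=\Hom_{k\G_{(r)}}(k,-)\circ\Hom_{\O(\G_{(r)})}(k,-)$ with the pairings $k\otimes k\to k$ and $k\otimes M\to M$. The only notable difference is in verifying that $F=(-)^{\O}$ preserves injectives: the paper computes directly that $\Hom_{\O(\G_{(r)})}(k,(\D\G_{(r)})^*)=(k\G_{(r)})^*$, which is injective since $k\G_{(r)}$ is self-injective, whereas you give the cleaner conceptual argument that $F$ is right adjoint to the exact inflation functor along $\D\G_{(r)}\twoheadrightarrow k\G_{(r)}$.
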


\begin{proof}
The equalities
$$\Hom_{\O(\G_{(r)})}(k,(\D\G_{(r)})^*) \ = \ \Hom_{\O(\G_{(r)})}(\D\G_{(r)},k) \ = \ \Hom_k(k\G_{(r)},k) = (k\G_{(r)})^*$$
imply $\Hom_{\O(\G_{(r)})}(k,(\D\G_{(r)})^*)$ is  projective as well as  injective as a $k\G_{(r)}$-module (because a $k\G_{(r)}$ module is projective 
if and only if it is injective~\cite{jantzen07,montgomery93}).   Since $\Hom_{\O(\G_{(r)})}(k,(\D\G_{(r)})^*) = (k\G_{(r)})^*$, we conclude that $\Hom_{\O(\G_{(r)})}(k,-)$
sends injective $\D\G_{(r)}$-modules to injective $k\G_{(r)}$-modules.  Consequently, Grothendieck's construction of the spectral sequence
for a composition of left exact functors applies to the 
composition $\Hom_{k\G_{(r)}}(k,-) \circ \Hom_{\O(\G_{(r)})}(k,-)$, and this spectral sequence takes the form~\eqref{specseq-k} when applied
to $k$ and the form~\eqref{specseq-M} when applied to $M$.

The algebra structure on~\eqref{specseq-k} and the module structure on~\eqref{specseq-M} follow from the multiplicative structure
established in Proposition \ref{prop:Groth} in view of the pairing $k \otimes k \to k$ (mulitplication of $k$) and $k \otimes M \to M$
(pairing with the trivial module) in the category $\rep(\D\G_{(r)})$.
\end{proof}

\subsection{Finite generation}

We can now prove that $H^*(\D\G_{(r)},k)$ is a finitely generated algebra.  This will be followed by Theorem \ref{thm:fingen2}, establishing our general finite generation theorem.  Recall that an algebra map $A\to B$ is called finite if $B$ is a finite module over $A$.  Recall also the map $\theta_r$ of Definition~\ref{def:g2HD}.

\begin{theorem}
\label{thm:fingen}
Let $\G$ be a smooth group scheme over a field $k$ of characteristic $p > 0$, let $r > 0$ be a positive integer, and let $\D\G_{(r)} \equiv \O(\G_{(r)}) \# k\G_{(r)}$ denote the Drinfeld double of the $r$-th Frobenius kernel of $\G$.

Then the graded $k$-algebra map
$$\theta_r:H^*(\G_{(r)},k) \otimes S(\g^{(r)}[2]) \ \to \ H^*(\D\G_{(r)},k)$$
is finite.
 
Consequently, 
\begin{itemize}
\item
$H^*(\D\G_{(r)},k)$ is a finitely generated $k$-algebra.  
\item
For any finite dimensional $\D\G_{(r)}$-module $M$ whose restriction to $\O(\G_{(r)})$ has trivial action, $H^*(\D\G_{(r)},M)$ is a finite 
$H^*(\G_{(r)},k) \otimes S(\g^{(r)}[2])$-module and hence a finite $H^*(\D\G_{(r)},k)$-module.
\end{itemize}
\end{theorem}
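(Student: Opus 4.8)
The plan is to push the Friedlander--Suslin argument \cite{friedlandersuslin97} through the Grothendieck spectral sequence \eqref{specseq-k}, the new ingredient being that the lift $\sigma_\D$ lets us promote the polynomial generators of the $E_2$-page to honest classes in $H^\ast(\D\G_{(r)},k)$. First I would identify the $E_2$-page. By Proposition~\ref{prop:381} there is an isomorphism of $\G_{(r)}$-algebras $H^\ast(\O(\G_{(r)}),k)\cong \wedge^\ast(\g[1])\ot S(\g^{(r)}[2])$ with $\G_{(r)}$ acting trivially on $\g^{(r)}$; since cohomology commutes with the filtered colimit $S(\g^{(r)}[2])=\varinjlim_d S^{\le d}(\g^{(r)}[2])$ of trivial modules, this yields $E_2^{\ast,\ast}(k)\cong H^\ast(\G_{(r)},\wedge^\ast(\g[1]))\ot S(\g^{(r)}[2])$. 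Put $R:=H^\ast(\G_{(r)},k)\ot S(\g^{(r)}[2])$. By Friedlander--Suslin the algebra $H^\ast(\G_{(r)},k)$ is finitely generated, so $R$ is a Noetherian graded-commutative ring; and $H^\ast(\G_{(r)},\wedge^\ast(\g[1]))$ is a finite $H^\ast(\G_{(r)},k)$-module because $\wedge^\ast(\g[1])$ is a finite-dimensional $\G_{(r)}$-module. Hence, via the injective ring map $R\to E_2^{\ast,\ast}(k)$ which is the identity on $E_2^{\ast,0}=H^\ast(\G_{(r)},k)$ and the inclusion $S(\g^{(r)}[2])\hookrightarrow H^\ast(\O(\G_{(r)}),k)^{\G_{(r)}}=E_2^{0,\ast}$ (cf.\ Corollary~\ref{cor:g2inv}) on the polynomial factor, $E_2^{\ast,\ast}(k)$ is a Noetherian $R$-module.

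Next I would check that the image of $R$ in $E_2^{\ast,\ast}(k)$ consists of permanent cycles. The subalgebra $E_2^{\ast,0}=H^\ast(\G_{(r)},k)$ consists of permanent cycles because the spectral sequence is first-quadrant. As permanent cycles form a subalgebra, for the polynomial factor it suffices to treat the degree-$2$ generators $\sigma_\O(\xi)\in E_2^{0,2}$, $\xi\in\g^{(r)}$; by Proposition~\ref{prop:g2HD}, $\sigma_\O(\xi)=\mathrm{res}(\sigma_\D(\xi))$ lies in the image of the edge map $H^2(\D\G_{(r)},k)\to E_\infty^{0,2}\hookrightarrow E_2^{0,2}$, hence is a permanent cycle. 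Consequently every differential $d_r$ is $R$-linear, so each page $E_r^{\ast,\ast}(k)$ is a subquotient of $E_2^{\ast,\ast}(k)$ as an $R$-module; being subquotients of a Noetherian module, $E_r^{\ast,\ast}(k)$, and in particular $E_\infty^{\ast,\ast}(k)$, are finite $R$-modules.

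Now I would pass from $E_\infty$ to the abutment. Filter $R$ by $F^sR:=\bigoplus_{a\ge s}H^a(\G_{(r)},k)\ot S(\g^{(r)}[2])$; since the inflation $H^a(\G_{(r)},k)\to H^a(\D\G_{(r)},k)$ is an edge map (so has image in $F^aH^a(\D\G_{(r)},k)$) and each $\sigma_\D(\xi)$ lies in $F^0H^2(\D\G_{(r)},k)$, the map $\theta_r$ of Definition~\ref{def:g2HD} is filtered, and the induced map $\mathrm{gr}\,\theta_r\colon \mathrm{gr}\,R\to \mathrm{gr}\,H^\ast(\D\G_{(r)},k)=E_\infty^{\ast,\ast}(k)$ is, under the canonical isomorphism $\mathrm{gr}\,R\cong R$, exactly the composite $R\to E_2^{\ast,\ast}(k)\to E_\infty^{\ast,\ast}(k)$ of the previous paragraph---here one uses that the image of $\sigma_\D(\xi)$ in $\mathrm{gr}^0H^2=E_\infty^{0,2}$ is $\sigma_\O(\xi)$, again by Proposition~\ref{prop:g2HD}. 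Thus $\mathrm{gr}\,H^\ast(\D\G_{(r)},k)$ is a finite $\mathrm{gr}\,R$-module; since the filtration on $H^\ast(\D\G_{(r)},k)$ is finite in each cohomological degree, the standard lifting argument (lift module generators of $E_\infty^{\ast,\ast}(k)$ to $H^\ast(\D\G_{(r)},k)$ and induct downward on filtration degree) shows $H^\ast(\D\G_{(r)},k)$ is finite over $R$ via $\theta_r$, i.e.\ $\theta_r$ is finite. Finite generation of $H^\ast(\D\G_{(r)},k)$ as a $k$-algebra is then immediate, since $R$ is finitely generated.

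Finally, for the last bullet: a finite-dimensional $\D\G_{(r)}$-module $M$ with trivial $\O(\G_{(r)})$-action is inflated from $k\G_{(r)}$, so $H^t(\O(\G_{(r)}),M)\cong H^t(\O(\G_{(r)}),k)\ot M$ as $\G_{(r)}$-modules, giving $E_2^{\ast,\ast}(M)\cong H^\ast(\G_{(r)},\wedge^\ast(\g[1])\ot M)\ot S(\g^{(r)}[2])$, again a finite $R$-module by Friedlander--Suslin (now with the finite-dimensional coefficient module $\wedge^\ast(\g[1])\ot M$). Since \eqref{specseq-M} is a module over \eqref{specseq-k} by Proposition~\ref{prop:specseq}, and $R$ acts through permanent cycles of \eqref{specseq-k}, the differentials on $E_r^{\ast,\ast}(M)$ are $R$-linear, so $E_\infty^{\ast,\ast}(M)$ is a finite $R$-module, and the same filtered lifting argument shows $H^\ast(\D\G_{(r)},M)$ is a finite $R$-module, hence a finite $H^\ast(\D\G_{(r)},k)$-module via $\theta_r$. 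The main obstacle in this scheme has in fact already been overcome in Section~\ref{sect:defo}: the crux is the existence of the lift $\sigma_\D$ of $\sigma_\O$ to the cohomology of the double, which is precisely what makes $R$ act on $H^\ast(\D\G_{(r)},k)$ compatibly with its action on $E_\infty$; granting this (and the Friedlander--Suslin finiteness that makes $R$ Noetherian), the remainder is a routine, if slightly delicate, manipulation of the multiplicative filtration.
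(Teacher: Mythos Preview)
Your proposal is correct and follows essentially the same line as the paper's proof: identify $E_2^{\ast,\ast}(k)\cong H^\ast(\G_{(r)},\wedge^\ast\g)\ot S(\g^{(r)}[2])$ via Proposition~\ref{prop:381}, use Friedlander--Suslin to see this is a finite module over $R=H^\ast(\G_{(r)},k)\ot S(\g^{(r)}[2])$, invoke the lift $\sigma_\D$ (Proposition~\ref{prop:g2HD}) to make the polynomial generators permanent cycles, and conclude by the standard Noetherian spectral-sequence argument. The only cosmetic difference is that the paper packages the last two steps (permanence of cycles, $R$-linearity of differentials, passage from $E_\infty$ to the abutment) into a single citation of \cite[Lemma~1.6]{friedlandersuslin97}, whereas you spell out the filtration and lifting explicitly; your unpacking is accurate and matches what that lemma does.
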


\begin{proof}
Take $\O=\O(\G_{(r)})$ and $C^*=H^*(\G_{(r)},k) \otimes S(\g^{(r)}[2])$.  This proof is an adaption of the proof of Theorem 1.1 of~\cite{friedlandersuslin97}.  Let $\{ E^{s,t}_r, r \geq 2 \}$ denote the spectral seqence
$\{ E^{s,t}_r(k), r \geq 2 \}$ of Proposition \ref{prop:specseq}.   

Observe that $H^*(\O,k) = S(\g^{(r)}[2]) \otimes \Gamma$; here, $S(\g^{(r)}[2])$ has trivial $\G_{(r)}$-action
and $\Gamma \equiv \wedge^*(H^1(\O,k))$ is finite dimensional.  Thus, $E^{*,*}_2 = H^*(\G_{(r)},H^*(\O,k))$ equals
$H^*(\G_{(r)},\Gamma) \otimes S(\g^{(r)}[2])$, since $M \mapsto H^0(\G_{(r)},M \otimes V)$ is the composite of $H^0(\G_{(r)},-)$
and the exact functor $- \otimes V$ for any trivial $\G_{(r)}$-module $V$.
We equip $H^*(\G_{(r)},H^*(\O,k)) = H^*(\G_{(r)},\Gamma) \otimes S(\g^{(r)}[2])$ with 
the ``external tensor product module structure" for the algebra 
$C^* \ = \      H^*(\G_{(r)},k)  \otimes S(\g^{(r)}[2])$.

By Theorem 1.1 of~\cite{friedlandersuslin97}, $H^*(\G_{(r)},\Gamma)$ is a finite $H^*(\G_{(r)},k)$-module.  It follows that $H^*(\G_{(r)},H^*(\O,k))$ is a finite $C^*$-module.
We identify this $C^*$-module structure on  $H^*(\G_{(r)},H^*(\O,k))$ as that given by the coproduct
$\phi \otimes \psi$ of two maps $\phi, \ \psi$ associated to the spectral sequence:  The first is the map 
$$\phi: S(\g^{(r)}[2]) \ \to \ H^*(\D\G_{(r)},k) = E_\infty^* \ \to \ E^{0,*}_\infty \subset E^{0,*}_2 \subset E^{*,*}_2$$
given by Proposition \ref{prop:g2HD};  the second is the natural map 
$$ H^{ev}(\G_{(r)},k) \subset E_2^{*,0} \ \subset \ E^{*,*}_2.$$

We have thus verified the hypotheses of Lemma 1.6 of~\cite{friedlandersuslin97}, enabling us to conclude that $H^*(\D\G_{(r)},k)$
is a finite module over the finitely generated algebra $C^*$ and thus is itself finitely generated.

Now, we consider a finite  dimensional $\D\G_{(r)}$-module $M$ whose restriction to $\O$ has trivial action.  
Then $E^{*,*}_2(M) = H^*(\G_{(r)},H^*(\O,M))$ equals $H^*(\G_{(r)},\Gamma \otimes M) \otimes S(\g^{(r)}[2])$
which is a finite $C^*$-module by another application of Theorem 1.1 of~\cite{friedlandersuslin97}
(this time, for the finite dimensional $k\G_{(r)}$-module $\Gamma \otimes M$).
Since $\{ E^{*,*}_r(M) \}$ is a module over $\{ E^{*,*}_r\}$ by Proposition \ref{prop:Groth}, Lemma 1.6 of~\cite{friedlandersuslin97} applies once again to imply that  $H^*(\D\G_{(r)},M)$ is finite as a $C^*$-module and thus as a $H^*(\D\G_{(r)},k)$-module.
\end{proof}

Recall our notation $|A| \ \equiv \Spec\!\ H^{ev}(A,k)_{\mathrm{red}}$ from the introduction.

\begin{corollary}
We have the inequality
\[
\dim|\D\G_{(r)}| \ \leq \ \dim |k\G_{(r)}|+\dim\mathbb{G}.
\]
\end{corollary}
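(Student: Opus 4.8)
The plan is to deduce the dimension bound directly from the finiteness of $\theta_r$ established in Theorem~\ref{thm:fingen}, together with a computation of the Krull dimension of the source algebra $C^\ast = H^\ast(\G_{(r)},k)\otimes S(\g^{(r)}[2])$. Since $\theta_r$ is a finite map of graded $k$-algebras, it is in particular integral, so the induced map on prime spectra is surjective onto the image and does not raise dimension; concretely, $\dim H^\ast(\D\G_{(r)},k) \leq \dim C^\ast$ because a finite extension preserves Krull dimension (and a quotient can only decrease it).

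First I would pass to reduced even-degree parts, noting that $|\D\G_{(r)}| = \Spec H^{ev}(\D\G_{(r)},k)_{\mathrm{red}}$ and similarly $|k\G_{(r)}| = \Spec H^{ev}(\G_{(r)},k)_{\mathrm{red}}$, and that Krull dimension is insensitive to nilpotents and to restricting to the even subalgebra (the full cohomology ring is finite over its even part since odd elements square into the even part, $p$ being odd). Then I would identify the dimension of the source: $\dim\big(H^{ev}(\G_{(r)},k)_{\mathrm{red}}\otimes S(\g^{(r)}[2])\big) = \dim|k\G_{(r)}| + \dim_k \g^{(r)} = \dim|k\G_{(r)}| + \dim\G$, using that $S(\g^{(r)}[2])$ is a polynomial algebra on $\dim\G$ generators and that the tensor product of finitely generated $k$-algebras has dimension the sum of the dimensions. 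Finally, finiteness of $\theta_r$ gives $\dim|\D\G_{(r)}| \leq \dim C^\ast = \dim|k\G_{(r)}| + \dim\G$, which is the asserted inequality.

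There is essentially no obstacle here: this is a formal consequence of Theorem~\ref{thm:fingen} plus standard commutative algebra. The only points requiring a word of care are the reduction to even-degree parts (so that we are genuinely computing the dimension of the affine schemes $|{-}|$ rather than of the ungraded cohomology rings) and the observation that a finite algebra map can only decrease Krull dimension — the latter because if $A \to B$ is finite then every prime of $B$ contracts to a prime of $A$ and chains of primes in $B$ push down to chains in $A$ by the going-down/incomparability properties of integral extensions, giving $\dim B \leq \dim A$.
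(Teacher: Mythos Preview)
Your proposal is correct and follows essentially the same approach as the paper: both deduce the inequality directly from the finiteness of $\theta_r$ (Theorem~\ref{thm:fingen}) together with the identification of the target spectrum with $|k\G_{(r)}|\times \mathbb{A}^{\dim\G}$. The paper phrases the conclusion via finite fibers of the induced scheme map, while you phrase it via Krull dimension of finite ring extensions; these are equivalent. One terminological quibble: the relevant property for integral extensions is incomparability (or going-up), not going-down.
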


\begin{proof}
Since $\theta_r$ is finite, by Theorem~\ref{thm:fingen}, the induced map on affine spectra 
\[
|\D\G_{(r)}| \to \Spec\left(H^{ev}(\G_{(r)},k)_\mathrm{red}\otimes S(\g^{(r)}[2])\right) \ \cong \ |k\mathbb{G}_r|\times \mathbb A^d
\]
has finite fibers, where $d = \dim(\G)$.  
\end{proof}

\begin{proposition}
\label{prop:simpmod}
Let $G$ be an infinitesimal group scheme.  If $V$ is a simple module for $\D G$, then $V$ restricts to a trivial $\O(G)$-module.
\end{proposition}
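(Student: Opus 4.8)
The plan is to show that $\O(G)$ acts on any simple $\D G$-module through a quotient which, by niceness of $\O(G)$, forces the action to be trivial. First I would recall that $G$ being infinitesimal means $\O(G)$ is a finite-dimensional local $k$-algebra: $\O(G) = k \oplus m_G$ with $m_G$ nilpotent (this is exactly the height $\leq r$ condition phrased at the level of the coordinate ring, using that $f^{p^r} = 0$ for $f \in m_G$). Consequently $m_G$ is a nilpotent ideal of $\O(G)$, and I want to promote it to a nilpotent ideal of $\D G$ — or at least show its image in $\End_k(V)$ is a nilpotent ideal of the image of $\D G$, which then must vanish by Wedderburn/Jacobson-radical considerations since $\D G$ acts on $V$ through a semisimple (in fact simple) matrix algebra.

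The key point is that the two-sided ideal $\D G \cdot m_G \cdot \D G$ generated by $m_G$ inside $\D G$ is already nilpotent. Here I would use the smash product structure $\D G = \O(G)\# kG$ together with the fact that $kG$ normalizes $\O(G)$ via the adjoint action: for $\xi \in kG$ and $f \in m_G$, the element $\xi \cdot f$ (the adjoint action of $\xi$ on $f$) again lies in $m_G$, because the adjoint action of a group scheme fixes the identity point and hence preserves its maximal ideal. Therefore $m_G \cdot kG = kG \cdot m_G$ as subspaces of $\D G$, and the left ideal $\D G \cdot m_G = (\O(G)\# kG)\cdot m_G = \O(G)\cdot(kG \cdot m_G) = \O(G)\cdot(m_G \cdot kG) = m_G \cdot kG$ — in particular $\D G \cdot m_G = m_G \cdot \D G$ is a two-sided ideal, call it $I$. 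Now $I^{\ell} = (m_G)^{\ell}\cdot kG$ (again pushing all the $kG$ factors to the right using normality), and since $m_G$ is nilpotent, say $m_G^N = 0$, we get $I^N = 0$.

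Finally, $I$ is a nilpotent two-sided ideal of $\D G$, hence $I$ is contained in the Jacobson radical $J(\D G)$. For any simple $\D G$-module $V$ we have $J(\D G)\cdot V = 0$, so in particular $I \cdot V = 0$, i.e. $\D G \cdot m_G \cdot V = 0$, which forces $m_G \cdot V = 0$: the action of $\O(G) = k \oplus m_G$ on $V$ factors through the counit $\O(G) \to k$, which is precisely the statement that $V$ restricts to a trivial $\O(G)$-module. The only mildly delicate point — and the one I'd want to state carefully — is the verification that the adjoint action of $kG$ preserves $m_G$, i.e. that $I$ really is a two-sided ideal; everything else is a short manipulation with the smash product and the standard fact that nilpotent ideals lie in the radical. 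I expect no serious obstacle here: the whole argument is essentially the observation that $\O(G)$ contributes only a nilpotent (unipotent) piece to $\D G$, together with basic semisimplicity.
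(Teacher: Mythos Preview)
Your proposal is correct and is essentially the same argument as the paper's: both observe that the adjoint action of $G$ preserves $m_G$, so that $m_G$ generates a nilpotent two-sided ideal of $\D G$, which therefore lies in the Jacobson radical and annihilates every simple module. The paper states this in two lines, while you have spelled out the smash-product manipulations more carefully; there is no substantive difference.
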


\begin{proof}
The maximal ideal $m$ in $\O(G)$ is nilpotent and is preserved by the adjoint action of $G$ on $\O(G)$.  Hence, the ideal $kG\cdot m \subset \D G$ is also nilpotent, and therefore contained in the Jacobson radical of $\D G$.  We conclude that restricting along the projection $\D G \to \D G/(kG\cdot m)  = kG$ determines a bijection $\mathrm{Irrep}(kG)\to \mathrm{Irrep}(\D G)$.
\end{proof}

In the following theorem, we implicitly use the following fact for any Noetherian $k$-algebra $C$: a $C$-module $M$ is Noetherian if and only if it is finitely generated (as a $C$-module).

\begin{theorem}
\label{thm:fingen2}
As in Theorem~\ref{thm:fingen}, let $\G$ be a smooth group scheme over a field $k$ 
of characteristic $p > 0$, let $r > 0$ be a positive integer, and let $\D\G_{(r)} \equiv \O(\G_{(r)}) \# k\G_{(r)}$ denote the Drinfeld double of the $r$-th Frobenius kernel of $\G$.

If $M$ is a finite dimensional $\D\G_{(r)}$-module, then $H^*(\D\G_{(r)},M)$ is finitely generated as a $H^*(\D\G_{(r)},k)$-module.
\end{theorem}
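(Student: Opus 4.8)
The goal is to remove the hypothesis from the second bullet of Theorem~\ref{thm:fingen} that $M$ restricts to a trivial $\O(\G_{(r)})$-module, so that finite generation of $H^\ast(\D\G_{(r)},M)$ over $H^\ast(\D\G_{(r)},k)$ holds for an arbitrary finite dimensional $M$. The standard device for this kind of reduction is d\'evissage along a composition series: if $0 \to M' \to M \to M'' \to 0$ is a short exact sequence of $\D\G_{(r)}$-modules and the result holds for $M'$ and $M''$, then the long exact sequence in $\Ext^\ast_{\D\G_{(r)}}(k,-)$ shows $H^\ast(\D\G_{(r)},M)$ is sandwiched between a submodule and a quotient of finitely generated $H^\ast(\D\G_{(r)},k)$-modules; since $H^\ast(\D\G_{(r)},k)$ is Noetherian (it is finitely generated as a $k$-algebra by Theorem~\ref{thm:fingen}, hence Noetherian by Hilbert's basis theorem plus the graded-commutativity of even cohomology), both the submodule and quotient are finitely generated, and one concludes $H^\ast(\D\G_{(r)},M)$ is finitely generated by the ``Noetherian = finitely generated'' observation flagged just before the theorem statement. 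So the whole argument reduces to the base case of a \emph{simple} $\D\G_{(r)}$-module.

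\textbf{Reduction to simple modules and conclusion.} For the base case, let $V$ be a simple $\D\G_{(r)}$-module. By Proposition~\ref{prop:simpmod} (applied with $G = \G_{(r)}$, which is an infinitesimal group scheme since $\G$ is smooth), $V$ restricts to a trivial $\O(\G_{(r)})$-module. But that is precisely the hypothesis of the second bullet of Theorem~\ref{thm:fingen}, which therefore tells us $H^\ast(\D\G_{(r)},V)$ is a finite $H^\ast(\G_{(r)},k)\ot S(\g^{(r)}[2])$-module, hence a finite $H^\ast(\D\G_{(r)},k)$-module since $\theta_r$ is finite. Thus the base case is handled, and the induction on the length of a composition series of $M$ goes through: every simple subquotient of $M$ has finitely generated cohomology module, and the long exact sequence propagates finite generation up the filtration.

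\textbf{What to be careful about.} The one point that deserves care is the Noetherian input: the d\'evissage argument needs that a submodule of a finitely generated $H^\ast(\D\G_{(r)},k)$-module is again finitely generated, which requires $H^\ast(\D\G_{(r)},k)$ to be a Noetherian ring, not merely a finitely generated $k$-algebra. For this one passes to the commutative subalgebra $H^{ev}(\D\G_{(r)},k)$ (or its reduction), over which $H^\ast(\D\G_{(r)},k)$ and all the cohomology modules in sight are finite by Theorem~\ref{thm:fingen}; a finite module over a Noetherian commutative ring is Noetherian, and the argument then runs over $H^{ev}(\D\G_{(r)},k)$ throughout, which is where the phrase ``we implicitly use'' in the lead-in to the theorem is pointing. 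I do not expect a serious obstacle here—the real content was already isolated in Theorem~\ref{thm:fingen} and Proposition~\ref{prop:simpmod}; this theorem is the formal bookkeeping that glues them together.
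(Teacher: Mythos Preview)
Your proposal is correct and matches the paper's proof essentially step for step: the paper also handles irreducibles via Proposition~\ref{prop:simpmod} together with Theorem~\ref{thm:fingen}, and then inducts on the length of a composition series using the long exact sequence and Noetherianity of $H^\ast(\D\G_{(r)},k)$. Your extra care about the Noetherian input (working over the commutative even part) is a welcome clarification of what the paper leaves implicit.
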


\begin{proof}
By Theorem \ref{thm:fingen} and Proposition \ref{prop:simpmod}, $H^*(\D\G_{(r)},M)$ is finitely generated over $H^*(\D\G_{(r)},k)$
whenever $M$ is an irreducible $\D\G_{(r)}$ module.  More generally, we proceed by induction on the length of a composition
series for $M$ as a $\D\G_{(r)}$-module.   Consider a short exact sequence $0 \to N \to M \to Q \to 1$ of finite dimensional
$\D\G_{(r)}$-modules with $N$ irreducible and assume our induction hypothesis applies to $Q$.   
Let $V \subset H^*(\D\G_{(r)},M)$ denote the image of $H^*(\D\G_{(r)},N)$ and
let $W \subset H^*(\D\G_{(r)},Q)$ denote the image of $H^*(\D\G_{(r)},M)$.  Since $H^*(\D\G_{(r)},k)$ is Noetherian, $V$ is a Noetherian
$H^*(\D\G_{(r)},k)$-module since it is a quotient of the Noetherian $H^*(\D\G_{(r)},k)$-module  $H^*(\D\G_{(r)},N)$; 
moreover, $W$ is a Noetherian
$H^*(\D\G_{(r)},k)$-module since it is a submodule of the Noetherian $H^*(\D\G_{(r)},k)$-module  $H^*(\D\G_{(r)},Q)$.  Granted the short exact 
sequence \ $0 \to V \to H^*(\D\G_{(r)},M) \to W \to 0$ of $H^*(\D\G_{(r)},k)$-modules, we conclude that 
$H^*(\D\G_{(r)},M)$ is also a Noetherian as a $H^*(\D\G_{(r)},k)$-module.
\end{proof}

\subsection{Cohomology of relative doubles}\label{sect:relative}
Given an inclusion of finite dimensional Hopf algebras $A\to B$, we can form the relative double $\D(B,A)$, which is the vector space $B^\ast\ot A$ along with multiplication given by the same formula as for the standard double.  Rather, we give $\D(B,A)$ the unique Hopf structure so that the vector space inclusion $\D(B,A)\to \D(B)$ is a map of Hopf algebras.  The relative double can be of technical importance, especially in tensor categorical settings (see for example~\cite{gelakinaidunikshych09,etingofnikshychostrik11}).
\par

For a closed subgroup $G\to \mbb{G}_{(r)}$ we write $\D(\mbb{G}_{(r)},G)$ for the relative double
\[
\mrm{D}(\mbb{G}_{(r)},G)=\mrm{D}(k\mbb{G}_{(r)},kG)=\O(\mbb{G}_{(r)})\# kG,
\]
where the smash product is taken relative to the adjoint action of $G$ on $\O(\mbb{G}_{(r)})$.
\par

Dually, for a quotient $B\to C$ of finite dimensional Hopf algebras we define the relative double $\D(C,B)$ as the vector space $C^\ast\ot B$ along with the unique Hopf structure so that the inclusion $\D(C,B)\to \D(B^\ast)$ is a map of Hopf algebras.  For a group scheme quotient $\mbb{G}_{(r)}\to G'$ we write
\begin{equation}\label{eq:885}
\mrm{D}(G',\mbb{G}_{(r)})=\O(G')\# k\mbb{G}_{(r)}.
\end{equation}
From~\cite[Eq.\ (11)--(12)]{radford93}, we see that $\D(G',\G_{(r)})$ is identified with the relative double $\D(kG',k\mbb{G}_{(r)})$.

\begin{theorem}
Let $\mbb{G}$ be a smooth algebraic group.  Consider an arbitrary closed subgroup scheme $G$ in $\mbb{G}_{(r)}$, and the relative double $\D(\mbb{G}_{(r)},G)$.  Then,
\begin{itemize}
\item The cohomology $H^\ast(\D(\mbb{G}_{(r)},G),k)$ is a finitely generated algebra.
\item If $M$ is a finite dimensional $\D(\mbb{G}_{(r)},G)$-module, then $H^\ast(\D(\mbb{G}_{(r)},G),M)$ is a finitely generated module over $H^\ast(\D(\mbb{G}_{(r)},G),k)$.
\end{itemize}
The same finite generation results hold for the relative doubles $\mrm{D}(\mbb{G}_{(r)}/\mbb{G}_{(s)},\mbb{G}_{(r)})$, for $s\leq r$.
\end{theorem}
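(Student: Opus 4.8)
The plan is to mimic the strategy used for the full double $\D\G_{(r)}$ in Theorems~\ref{thm:fingen} and~\ref{thm:fingen2}, adapting each structural ingredient to the relative setting. First I would record the analogue of the torsor/Galois observation: for the relative double $\D(\mbb{G}_{(r)},G)=\O(\mbb{G}_{(r)})\#kG$ the subalgebra $\O(\mbb{G}_{(r)})$ is normal, the quotient is $kG$, and the extension $\O(\mbb{G}_{(r)})\to \D(\mbb{G}_{(r)},G)$ is $kG$-Galois, so that $\Hom_{\O(\mbb{G}_{(r)})}(k,(\D(\mbb{G}_{(r)},G))^\ast)\cong (kG)^\ast$ is injective as a $kG$-module. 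This gives, exactly as in Proposition~\ref{prop:specseq}, a multiplicative Grothendieck spectral sequence
\[
E_2^{s,t}=H^s(G,H^t(\O(\mbb{G}_{(r)}),k))\ \Rightarrow\ H^{s+t}(\D(\mbb{G}_{(r)},G),k),
\]
together with its module version for any $\D(\mbb{G}_{(r)},G)$-module $M$.

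Next I would identify the pieces of $E_2$. By Proposition~\ref{prop:381} we have $H^\ast(\O(\mbb{G}_{(r)}),k)=\Gamma\ot S(\g^{(r)}[2])$ with $\Gamma=\wedge^\ast(\g[1])$ finite dimensional and $S(\g^{(r)}[2])$ carrying trivial $\mbb{G}_{(r)}$-action, hence trivial $G$-action; therefore $E_2=H^\ast(G,\Gamma)\ot S(\g^{(r)}[2])$. The key input replacing Theorem~1.1 of~\cite{friedlandersuslin97} is that $H^\ast(G,k)$ is finitely generated and $H^\ast(G,\Gamma)$ is a finite $H^\ast(G,k)$-module for any finite group scheme $G$ (again~\cite{friedlandersuslin97}, since $G$ is an infinitesimal/finite group scheme over $k$). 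Combining the inflation $H^\ast(G,k)\to H^\ast(\D(\mbb{G}_{(r)},G),k)$ with the relative-double analogue of $\sigma'_\D$—namely the deformation class map obtained from viewing $\O(\mbb{G}_{(r+1)})\#kG$ as a deformation of $\O(\mbb{G}_{(r)})\#kG$ parametrized by $\mbb{G}_{(r+1)}/\mbb{G}_{(r)}$, whose image lands in $E_\infty^{0,\ast}$ by the verbatim argument of Proposition~\ref{prop:g2HD}—I would then invoke Lemma~1.6 of~\cite{friedlandersuslin97} to deduce that $H^\ast(\D(\mbb{G}_{(r)},G),k)$ is a finite module over $H^\ast(G,k)\ot S(\g^{(r)}[2])$, hence finitely generated. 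The module statement follows the same way using the module spectral sequence together with finite generation of $H^\ast(G,\Gamma\ot M)$ over $H^\ast(G,k)$, and then the d\'evissage along a composition series exactly as in Theorem~\ref{thm:fingen2}; note Proposition~\ref{prop:simpmod} is not needed here since the module version of the spectral sequence already gives finiteness for every finite dimensional $M$, not just those trivial over $\O(\mbb{G}_{(r)})$.

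For the ``dual'' family $\mrm{D}(\mbb{G}_{(r)}/\mbb{G}_{(s)},\mbb{G}_{(r)})=\O(\mbb{G}_{(r)}/\mbb{G}_{(s)})\#k\mbb{G}_{(r)}$ I would argue symmetrically. Here $\O(\mbb{G}_{(r)}/\mbb{G}_{(s)})$ is normal with quotient $k\mbb{G}_{(r)}$, giving a spectral sequence $H^s(\mbb{G}_{(r)},H^t(\O(\mbb{G}_{(r)}/\mbb{G}_{(s)}),k))\Rightarrow H^{s+t}$; the quotient $\mbb{G}_{(r)}/\mbb{G}_{(s)}$ is again a Frobenius kernel up to Frobenius twist ($\cong (\mbb{G}/\mbb{G}_{(s)})_{(r-s)}^{(s)}$ via~\cite[I.9.5]{jantzen07}), and applying Proposition~\ref{prop:381} to the smooth group $\mbb{G}/\mbb{G}_{(s)}$ (smoothness of the quotient is where I would have to be a little careful) identifies $H^\ast(\O(\mbb{G}_{(r)}/\mbb{G}_{(s)}),k)$ as a finite-dimensional exterior algebra tensored with a polynomial algebra with trivial $\mbb{G}_{(r)}$-action; the deformation $\O(\mbb{G}_{(r+1)}/\mbb{G}_{(s)})\#k\mbb{G}_{(r)}$ supplies the requisite permanent cycles in $E_\infty^{0,\ast}$, and the inflation $H^\ast(\mbb{G}_{(r)},k)\to H^\ast$ supplies the base-row classes, so Lemma~1.6 of~\cite{friedlandersuslin97} again closes the argument, with the module case and d\'evissage as before. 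The main obstacle I anticipate is precisely the bookkeeping in this last case: verifying that the relevant quotient group scheme is smooth (so that Proposition~\ref{prop:381} genuinely applies) and that $\O(\mbb{G}_{(r)}/\mbb{G}_{(s)})$ really is a flat deformation of the expected shape under $\O(\mbb{G}_{(r+1)}/\mbb{G}_{(s)})$—i.e.\ that the pullback-square picture of Section~\ref{sect:HO} survives the quotient. Once that is in place, no new ideas beyond Sections~\ref{sect:defo}--\ref{sect:fg} are required.
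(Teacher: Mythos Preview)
Your approach is essentially identical to the paper's sketch proof: the same Grothendieck spectral sequence from the extension $\O(\G_{(r)})\to\D(\G_{(r)},G)\to kG$, the same deformation $\D(\G_{(r+1)},G)$ (resp.\ $\O(\G_{(r+1)}/\G_{(s)})\#k\G_{(r)}$) to lift $\sigma_\O$ and produce permanent cycles, and the same appeal to Lemma~1.6 of~\cite{friedlandersuslin97}.

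Two small corrections. First, your aside that Proposition~\ref{prop:simpmod} is not needed is inconsistent with what you actually wrote: the identification $E_2^{\ast,\ast}(M)=H^\ast(G,\Gamma\otimes M)\otimes S(\g^{(r)}[2])$ you use is only valid when $\O(\G_{(r)})$ acts trivially on $M$, since for general $M$ one does not have $H^\ast(\O(\G_{(r)}),M)\cong H^\ast(\O(\G_{(r)}),k)\otimes M$. So you still need the observation that simples are $\O$-trivial (the argument of Proposition~\ref{prop:simpmod} carries over verbatim, as $G\subset\G_{(r)}$ is infinitesimal and the maximal ideal of $\O(\G_{(r)})$ is $G$-stable) followed by the d\'evissage you already invoke. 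Second, your smoothness worry in the quotient case is unfounded: the isomorphism $\G_{(r)}/\G_{(s)}\cong\G^{(s)}_{(r-s)}$ of~\cite[I.9.5]{jantzen07} exhibits this quotient directly as a Frobenius kernel of $\G^{(s)}$, and $\G^{(s)}$ is smooth since smoothness is stable under the base change defining the Frobenius twist. Hence Proposition~\ref{prop:381} (applied to $\G^{(s)}$ with height $r-s$) and the deformation $\O(\G_{(r+1)}/\G_{(s)})$ give exactly what is needed, with no extra bookkeeping.
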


\begin{proof}[Sketch proof]
Consider a closed subgroup $G\to \mbb{G}_{(r)}$.  We have the sequence $\O(\mbb{G}_{(r)})\to D(\mbb{G}_{(r)},G)\to kG$, from which we derive Grothendieck spectral sequences as in~\eqref{specseq-k} and~\eqref{specseq-M}.  We need to exhibit a finitely generated algebra of permanent cocycles in the $E_2$-page of the spectral sequence
\[
E_2^{s,t}(k) = H^s(G,H^t(\O(\G_{(r)}),k)) \ \Rightarrow \ H^{s+t}(\D(\mbb{G}_{(r)},G),k)
\]
over which $E_2^{\ast,\ast}$ is a finite module.  Just as in the proof of Theorem~\ref{thm:fingen}, it suffices to show that the image of the embedding $\sigma_\O:\g^{(r)}\to H^2(\O(\G_{(r)}),k)$ from Section~\ref{sect:HO} consists entirely of permanent cocycles in $E_2^{\ast,\ast}$.  The deformation $\mrm{D}_\mrm{nat}=\mrm{D}(\mbb{G}_{(r+1)},G)$ provides a lifting $\sigma_\D:\g^{(r)}\to H^2(\D(\mbb{G}_{(r)},G),k)$ of $\sigma_\O$, which verifies permanence of the cocycles $\g^{(r)}\subset H^2(\O(\mbb{G}_{(r)}),k)$.  We can now argue as in the proof of Theorem~\ref{thm:fingen} to establish finite generation.
\par

In the case of a quotient $\mbb{G}_{(r)}/\mbb{G}_{(s)}\cong \mbb{G}_{(r-s)}^{(s)}$, we have the deformation $\O_\mrm{nat}=\O(\mbb{G}_{(r+1)}/\mbb{G}_{(s)})$ of $\O(\mbb{G}_{(r)}/\G_{(s)})$ and the deformation $\mrm{D}_\mrm{nat}=\O_\mrm{nat}\# k\mbb{G}_{(r)}$ of the relative double $\D(\mbb{G}_{(r)}/\mbb{G}_{(s)},\mbb{G}_{(r)})$.  These deformations provide an inclusion
\[
\sigma_\O:\g^{(r)}\to H^2(\O(\mbb{G}_{(r)}/\mbb{G}_{(s)}),k)
\]
and a lifting $\sigma_\D:\g^{(r)}\to H^2(\mrm{D}(\mbb{G}_{(r)}/\mbb{G}_{(s)},\mbb{G}_{(r)}),k)$ of $\sigma_\O$.  We employ $\sigma_\O$ and $\sigma_\D$, and again argue as in Theorem~\ref{thm:fingen}, to establish finite generation.
\end{proof}

\begin{remark}
For a general quotient $p:\mbb{G}_{(r)}\to G'$, we expect that finite generation of cohomology for the relative double $\D(G',\mbb{G}_{(r)})$ can be proved via the same deformation theoretic approach as above.  If we take $K=\ker(p)$, the necessary deformation in this case should be provided by the quotient scheme $\mbb{G}_{(r+1)}/K$.  Some care needs to be taken, however, in dealing with the arbitrary nature of the subgroup $K$.
\end{remark}

\begin{remark}
In the notation of~\cite[Sect.\ 2B]{gelakinaidunikshych09}, the relative double $\D(\mbb{G}_{(r)},G)$ has representation category isomorphic to the relative center $Z_{\mc{C}}(\mc{M})$ where $\mc{C}=\mrm{rep}(\mbb{G}_{(r)})$, $\mc{M}=\mrm{rep}(G)$, and the $\mc{C}$-action on $\mc{M}$ is given by the restriction functor $\mrm{rep}(\mbb{G}_{(r)})\to \mrm{rep}(G)$.  Similarly, for a quotient $\mbb{G}_{(r)}\to G'$, we have $\mrm{D}(G',\mbb{G}_{(r)})\cong Z_\mc{D}(\mc{N})$ where $\mc{D}=\mrm{corep}(k\mbb{G}_{(r)})$ and $\mc{N}=\mrm{corep}(k G')$.
\end{remark}

In the final two sections of this paper we provide analyses of the spectrum of cohomology and support for the (usual) double $\D\G_{(r)}$.  These analyses are valid for the relative doubles $\D(\mbb{G}_{(r)},G)$ as well.  In particular, one replaces $k\G_{(r)}$ with $kG$ and repeats the arguments verbatim.  As we would like to emphasize the double $\D\G_{(r)}$, we choose not to make explicit reference to the relative settings therein.

\section{Spectrum of cohomology for classical groups}
\label{sect:qlog}

By Theorem~\ref{thm:fingen}, the cohomology of the double $\D\G_{(r)}$ is finite over the image of $H^\b(\G_{(r)},k)\ot S(\g^{(r)}[2])$, under the map $\theta_r$ of Definition~\ref{def:g2HD}.  The map $\theta_r$ then induces a finite scheme map
\begin{equation}\label{eq:mapinquestion}
\Theta_r:|\D\G_{(r)}|\to |k\G_{(r)}|\times (\g^\ast)^{(r)},
\end{equation}
where $|A|=\Spec\!\ H^{ev}(A)_{\mathrm{red}}$.
\par

In this section we show that $\Theta_r$ is an isomorphism when $\G$ is one of many classical algebraic groups with either $p$ sufficiently
large for $p$ or $r$ sufficiently large relative to the dimension of $\G$.  Our results follow from an analysis of algebraic groups which admit a quasi-logarithm.

\begin{remark}
The schemes $|k\G_{(r)}|$ have been extensively studied and, in conjunction with support varieties of $\G_{(r)}$-representations, provide one means of approaching modular representation theory.  One can see the survey~\cite{pevtsova} for example, and the references therein.
\end{remark}

\subsection{Quasi-logarithms}
\label{sect:ql}

Let $\G$ be an algebraic group with Lie algebra $\g=\mathrm{Lie}(\G)$.  We let $\G$ act on itself and its Lie algebra $\g$ via the adjoint action.  The following definition is adapted from~\cite{kazhdanvarshavsky06}.

\begin{definition}
A quasi-logarithm for $\G$ is a $\G$-equivariant map $L:\G\to \g$ of $k$-schemes such that $L(1)=0$ and the differential $\mathrm{d}_1L:T_1\G\to T_0\g$ is the identity on $\g$.
\end{definition}

The information of a quasi-logarithm for $\G$ is exactly the information of a $\G$-linear splitting $\g^\ast\to m_\G$ of the projection $m_\G\to m_\G/m_\G^2=\g^\ast$, where $m_\G$ is the maximal ideal corresponding to the identity of $\G$.  Let us give some examples.

\begin{proposition}\label{prop:ql1}
The general linear group $\GL_n$ admits a quasi-logarithm.
\end{proposition}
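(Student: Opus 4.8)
The plan is to exhibit an explicit $\GL_n$-equivariant map $L\colon \GL_n \to \gl_n$ satisfying $L(1)=0$ and $\mathrm{d}_1 L = \mathrm{id}$. The most natural candidate is the ``linear logarithm'' $L(g) = g - 1$, i.e.\ the map of schemes whose comorphism $L^\ast\colon \mathrm{Sym}(\gl_n^\ast) \to \O(\GL_n)$ sends the coordinate function $e_{ij}^\ast$ on $\gl_n$ (dual to the elementary matrix $e_{ij}$) to $x_{ij} - \delta_{ij}$, where $x_{ij}$ are the standard coordinates on $\GL_n$. First I would check that this is a well-defined morphism of schemes, which is immediate since $\O(\GL_n) = k[x_{ij}, \det^{-1}]$ and we are just specifying where polynomial generators go.

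Next I would verify the three required properties. For $L(1)=0$: evaluating at the identity means applying the counit $\epsilon(x_{ij}) = \delta_{ij}$, so $x_{ij}-\delta_{ij} \mapsto 0$, hence $L(1)=0$. For the differential condition: $\mathrm{d}_1 L$ is computed from $L^\ast$ modulo $m_{\GL_n}^2$, where $m_{\GL_n}$ is the maximal ideal at the identity generated by the $x_{ij}-\delta_{ij}$; the induced map $\gl_n^\ast \to m_{\GL_n}/m_{\GL_n}^2$ sends $e_{ij}^\ast$ to the class of $x_{ij}-\delta_{ij}$, which is precisely the standard identification $\gl_n^\ast \cong m_{\GL_n}/m_{\GL_n}^2$, so the dual map $\mathrm{d}_1 L\colon T_1\GL_n \to T_0\gl_n = \gl_n$ is the identity. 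For $\GL_n$-equivariance: the adjoint action on $\GL_n$ is $g \cdot h = ghg^{-1}$ and on $\gl_n$ is $g \cdot X = gXg^{-1}$, and conjugation is linear in $h$ and fixes the identity matrix, so $L(ghg^{-1}) = ghg^{-1} - 1 = g(h-1)g^{-1} = g\cdot L(h)$; at the level of Hopf algebras one checks that $L^\ast$ intertwines the adjoint coaction on $\O(\GL_n)$ with the (linear, adjoint) coaction on $\mathrm{Sym}(\gl_n^\ast)$, which follows from $\Delta(x_{ij}) = \sum_k x_{ik}\ot x_{kj}$ together with the antipode formula.

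I do not expect a serious obstacle here; the only mild subtlety is bookkeeping with the adjoint \emph{co}action (rather than action) when phrasing everything Hopf-algebraically, i.e.\ checking that $\rho(x_{ij} - \delta_{ij}) = \sum_{k,l}(x_{kl} - \delta_{kl}) \ot S(x_{ik})x_{lj}$ lands, after applying $L^\ast$ on the left factor, in the image of the linear coaction on $\mathrm{Sym}(\gl_n^\ast)$ concentrated in the degree-one part. This reduces to the identity $\sum_{k,l} e_{kl}^\ast \ot S(x_{ik})x_{lj}$ being the correct coaction on $\gl_n^\ast$, which is a direct matrix computation using $\sum_k S(x_{ik}) x_{kj} = \delta_{ij}$. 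Since the construction $g \mapsto g-1$ is manifestly conjugation-equivariant on points and $\GL_n$ is reduced (so a map of schemes is determined by its effect on points over the algebraic closure), the cleanest write-up may simply observe equivariance pointwise and deduce it scheme-theoretically, avoiding the coaction bookkeeping entirely.
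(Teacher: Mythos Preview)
Your proposal is correct and is essentially the same construction as the paper's: both use the map $g\mapsto g-1$, i.e.\ the splitting $\gl_n^\ast\to m_{\GL}$ sending $e_{ij}^\ast\mapsto x_{ij}-\delta_{ij}$. The only cosmetic difference is in the equivariance check: the paper argues abstractly that $V=\operatorname{span}\{x_{ij}-\delta_{ij}\}$ is a sub-comodule by observing $\rho(V)\subset (k1\oplus V)\otimes\O(\GL_n)$ and $\rho(V)\subset m_{\GL}\otimes\O(\GL_n)$ and intersecting, whereas you verify conjugation-equivariance pointwise (or by direct coaction computation); both are fine.
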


\begin{proof}
The augmentation ideal $m_{\GL}$ is generated by the functions $x_{ij}-\delta_{ij}$.  Take $V$ to be the span of these functions $k\{x_{ij}-\delta_{ij}:1\leq i,j\leq n\}$.  The sequence $V\to m_{\GL}\to \gl_n^\ast$ provides a linear isomorphism between $V$ and $\gl_n^\ast$.
\par

For the comultiplication on $\O(\GL_n)$ we have $\Delta(x_{ij})=\sum_k x_{ik}\ot x_{kj}$.  Thus for the adjoint coaction $\rho$ restricted to $V$ we will have $\rho(V)\subset (k1_\O\oplus V)\otimes \O(\GL_n)$.  Since $m_{\GL}$ is preserved by the adjoint coaction, and $V\subset m_{\GL}$, we will also have $\rho(V)\subset m_{\GL}\ot \O(\GL_n)$.  Taking the intersection of these two subspaces gives $\rho(V)\subset V\ot \O(\GL_n)$.  Thus we see that $V$ is a subcomodule of $\O(\GL_n)$ under the adjoint coaction.  The aforementioned sequence then provides a $\GL_n$-linear isomorphism $V\to \gl_n^\ast$.  Taking the inverse $\gl_n^\ast\to V\subset m_{\GL}$ provides the desired quasi-logarithm.
\end{proof}

We can also address many simple algebraic groups.  An odd prime $p$ is {\it very good} for a simple algebraic group $\G$ if $p$ does not divide $n$ for $\G$ of type 
$A_{n-1}$, if $p \neq 3$ for $\G$ of type $E_6,\ E_7,\ F_4,\ G_2$, and $p \neq 3,5$ for $\G$ of type $E_8$.  For convenience we extend the notion of a very good prime to $\GL_n$, in which case all primes will be considered very good.

\begin{corollary}[{cf.~\cite[Lem.\ C3]{bkv16}}]\label{cor:ql2}
If $\G$ is a simple algebraic group for which $p$ is very good, then $\G$ admits a quasi-logarithm.  Furthermore, any Borel subgroup $\mathbb{B}$ in such a $\G$ also admits a quasi-logarithm.
\end{corollary}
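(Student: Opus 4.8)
The plan is to write down, for $\G$ a general linear group or a simple group with $p$ very good, a quasi-logarithm in a uniform ``difference'' form, and then observe that the very same map restricts to a quasi-logarithm on any Borel. The basic mechanism is: if $H\hookrightarrow\GL(V)$ is a closed subgroup whose Lie algebra $\h$ occurs as an $H$-module direct summand of $\mathfrak{gl}(V)$ (for the adjoint action), with $H$-equivariant projection $\mathrm{pr}\colon\mathfrak{gl}(V)\to\h$, then
\[
L_H\colon H\longrightarrow\h,\qquad g\longmapsto\mathrm{pr}(g-\mathrm{id}_V),
\]
is a quasi-logarithm for $H$: the map $g\mapsto g-\mathrm{id}_V$ is equivariant for conjugation on $\GL(V)$ and the adjoint action on $\mathfrak{gl}(V)$, sends $1$ to $0$, and has differential at $1$ the inclusion $\h\hookrightarrow\mathfrak{gl}(V)$; composing with the $H$-equivariant $\mathrm{pr}$, which restricts to the identity on $\h$, gives exactly the three defining conditions. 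This reduces the first assertion to producing a faithful $V$ with $\g$ a $\G$-module summand of $\mathfrak{gl}(V)$; for $\G=\GL_n$ one takes $V=k^n$ and $\mathrm{pr}=\mathrm{id}$, which recovers Proposition~\ref{prop:ql1}.

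For a simple $\G$ with $p$ very good, I would take a faithful representation $V$ whose trace form $\langle X,Y\rangle=\mathrm{tr}_V(XY)$ restricts to a non-degenerate form on $\g$ -- for classical types the defining representation works, and in general the existence of such a $V$ is the standard statement that $\g$ admits a non-degenerate invariant form in very good characteristic. As the trace form on $\mathfrak{gl}(V)$ is non-degenerate and $\G$-invariant, one gets a $\G$-module splitting $\mathfrak{gl}(V)=\g\oplus\g^{\perp}$ with $\G$-equivariant orthogonal projection $\mathrm{pr}$, and the previous paragraph yields the quasi-logarithm $L$ of $\G$.

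For a Borel $\mathbb{B}\subset\G$ I would show that this $L$ already carries $\mathbb{B}$ into $\mathfrak{b}=\mathrm{Lie}(\mathbb{B})$, so that $L|_{\mathbb{B}}\colon\mathbb{B}\to\mathfrak{b}$ is a quasi-logarithm for $\mathbb{B}$. After base change to $\bar k$ (harmless for the cohomological applications), $\mathbb{B}$ stabilizes a full flag $\mathcal{F}$ in $V$ by Lie--Kolchin; write $\mathfrak{b}_V\subset\mathfrak{gl}(V)$ for the Borel subalgebra stabilizing $\mathcal{F}$ and $\mathfrak{n}_V$ for its nilradical, which is exactly the trace-orthocomplement $\mathfrak{b}_V^{\perp}$. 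Then: (i) $g-\mathrm{id}_V\in\mathfrak{b}_V$ for $g\in\mathbb{B}$; (ii) $\mathfrak{b}=\g\cap\mathfrak{b}_V$, since $\g\cap\mathfrak{b}_V$ is a solvable subalgebra of $\g$ containing the Borel subalgebra $\mathfrak{b}$, hence a parabolic, hence (being solvable) equal to $\mathfrak{b}$; and (iii) $\u=\mathrm{Lie}(R_u\mathbb{B})\subseteq\mathfrak{n}_V$, because a unipotent group stabilizing a full flag acts trivially on its one-dimensional subquotients. Comparing $\dim(\mathfrak{b}_V\cap\g^{\perp})$ with $\dim\mathfrak{b}_V-\dim\mathfrak{b}$ via the orthocomplements $\mathfrak{b}_V^{\perp}=\mathfrak{n}_V$ and $(\g^{\perp})^{\perp}=\g$, and using $\dim\u=\dim\g-\dim\mathfrak{b}$ together with (iii), forces $\mathfrak{n}_V\cap\g=\u$ and then $\mathfrak{b}_V=\mathfrak{b}\oplus(\mathfrak{b}_V\cap\g^{\perp})$. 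Since $\mathrm{pr}$ kills $\g^{\perp}$ and restricts to the identity on $\mathfrak{b}\subseteq\g$, this gives $\mathrm{pr}(\mathfrak{b}_V)=\mathfrak{b}$, and with (i) we conclude $L(\mathbb{B})\subseteq\mathrm{pr}(\mathfrak{b}_V)=\mathfrak{b}$, as required.

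The one external ingredient, and the step I expect to require the most care, is the existence of a faithful $V$ with $\mathrm{tr}_V$ non-degenerate on $\g$ in the very good case -- equivalently, of a non-degenerate invariant form on $\g$ -- together with the verification that ``very good'' is exactly the hypothesis that makes this work across the borderline types and small primes (this is essentially the content of \cite[Lem.\ C3]{bkv16}). Everything else is the two short linear-algebra computations above.
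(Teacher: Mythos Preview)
Your argument is correct and, for the simple group, essentially coincides with the paper's: both produce a $\G$-equivariant retraction $\gl(V)\to\g$ and precompose with the ``difference'' quasi-logarithm $g\mapsto g-\mathrm{id}_V$ on $\GL(V)$. The paper obtains the retraction by citing Garibaldi's result that such a $\G$-equivariant splitting exists in very good characteristic, whereas you propose to realize it concretely as orthogonal projection for a trace form that is non-degenerate on $\g$; these are the same mechanism, and indeed the trace-form statement is one standard way to prove Garibaldi's proposition.

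The genuine divergence is in the Borel case. The paper simply invokes \cite[Lem.\ 1.8.3]{kazhdanvarshavsky06}, which says that a quasi-logarithm on $\G$ restricts to one on any Borel. You instead give a self-contained linear-algebra proof: using Lie--Kolchin to place $\mathbb{B}$ inside a Borel $\mathfrak{b}_V$ of $\gl(V)$, and then a dimension count with orthocomplements (pivoting on $\mathfrak{b}_V^\perp=\mathfrak{n}_V$ and $\u\subseteq\mathfrak{n}_V\cap\g\subseteq\mathfrak{b}$) to force $\mathfrak{b}_V=\mathfrak{b}\oplus(\mathfrak{b}_V\cap\g^\perp)$, whence $\mathrm{pr}(\mathfrak{b}_V)=\mathfrak{b}$. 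This is exactly the content of the Kazhdan--Varshavsky lemma unpacked in the trace-form model, so what you gain is self-containment at the cost of a paragraph of bookkeeping. Two small clean-ups: in (ii) you can drop the word ``parabolic'' and just say Borel subalgebras are maximal solvable; and the base change to $\bar k$ is only needed to run Lie--Kolchin, after which the containment $L(\mathbb{B})\subseteq\mathfrak{b}$ descends to $k$ because both $L$ and $\mathfrak{b}$ are defined over $k$.
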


\begin{proof}
In this case there exists an integer $n$ and an embeddging $i:\G \to \GL_n$ such that the differential $\mathrm{d}_1i:\g \to \gl_n$ admits a $\G$-equivariant splitting $\tau:\gl_n \to \g$, by a result of Garibaldi~\cite[Prop.\ 8.1]{garibaldi09}.  Composing with a quasi-logarithm $L$ for $\GL_n$ produces a quasi-logarithm $L'$ for $\G$,
\[
\G\to \GL_n\overset{L}\to \gl_n\overset{\tau}\to \g.
\]
By~\cite[Lem.\ 1.8.3]{kazhdanvarshavsky06}, the restriction of $L'$ to any Borel subgroup $\mathbb{B}$ will provide a quasi-logarithm for $\mathbb{B}$.
\end{proof}

Consider a semisimple algebraic group $\G$ and a unipotent subgroup $\mbb{U}$ in $\mbb{G}$ which is normalized by a maximal torus.  We let $cl(\mbb{U})$ denote the nilpotence class of a $\mbb{Q}$-form of $\mbb{U}$ in a $\mbb{Q}$-form of $\G$ (see~\cite{seitz00}).  For example, if we consider $\G=\mathrm{SL}_n$ and $\mbb{U}$ the unipotent subgroup of upper triangular matrices, then $cl(\mbb{U})=n-1$.  The following result is covered in work of Seitz.

\begin{proposition}[{\cite[Prop.\ 5.2]{seitz00}}]\label{prop:ql3}
Let $\G$ be semisimple and $\mbb{U}$ be a unipotent subgroup in $\G$ which is normalized by a maximal torus.  If $p>cl(\mbb{U})$ then $\mbb{U}$ admits a quasi-logarithm.
\end{proposition}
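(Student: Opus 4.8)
The plan is to produce $L$ as the logarithm map attached to $\mbb{U}$ by the truncated Baker--Campbell--Hausdorff (BCH) formula, which is available precisely because $\mbb{U}$ has nilpotence class smaller than $p$.

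First I would pin down the structure of $\mbb{U}$.  Since $\mbb{U}$ is normalized by a maximal torus $\mbb{T}$, its Lie algebra $\u:=\mrm{Lie}(\mbb{U})$ is a $\mbb{T}$-stable subspace of $\mrm{Lie}(\mbb{G})$, hence a sum of root spaces $\u=\bigoplus_{\alpha\in\Psi}\g_\alpha$; as $\mbb{U}$ is a closed subgroup, $\Psi$ is a closed set of roots and $\mbb{U}=\prod_{\alpha\in\Psi}\mbb{U}_\alpha$ as a variety (in any order refining height), with multiplication governed by the Chevalley commutator relations (see~\cite{jantzen07}).  Thus $\mbb{U}$ is a split unipotent group defined over $\mbb{Z}$, the $\mbb{Z}$-span $\u_{\mbb{Z}}$ of the Chevalley generators $e_\alpha$ $(\alpha\in\Psi)$ is a nilpotent Lie ring with $\u_{\mbb{Z}}\ot k=\u$, and --- the lower central series of $\u_{\mbb{Z}}$ already vanishing in degree $cl(\mbb{U})+1$ over $\mbb{Q}$, with $\u_{\mbb{Z}}$ being $\mbb{Z}$-torsion free --- both $\u$ and $\mbb{U}$ are nilpotent of class at most $c:=cl(\mbb{U})<p$.

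The main point is the construction.  Because $\u_{\mbb{Z}}$ is nilpotent of class $\leq c$, the Hausdorff series $H(X,Y)=\log(\exp X\exp Y)$ terminates after degree $c$; by Dynkin's explicit formula its degree-$n$ term has denominator divisible only by primes $\leq n\leq c$, and the same bound applies to the truncated exponential and logarithm series.  As all these primes are $<p$, the formulas are defined over $\mbb{Z}_{(p)}$, hence over $k$: one obtains the Lazard correspondence in nilpotence class $<p$, i.e.\ mutually inverse morphisms of $k$-schemes $\exp\colon\u\to\mbb{U}$ and $\log\colon\mbb{U}\to\u$, with $\exp$ an isomorphism of group schemes $(\u,\ast_{\mrm{BCH}})\overset{\sim}{\to}\mbb{U}$.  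I would then set $L:=\log$.  Immediately $L(1)=0$, and the degree-one part of $\log(1+X)$ being $X$ shows $\mrm{d}_1L$ is the identity of $\u$.  Equivariance is automatic from the universal (bracket-only) nature of BCH: $\log$ is functorial in $\mbb{U}$, hence commutes with every algebraic automorphism; applied to conjugation by $u\in\mbb{U}$ this reads $\log(uvu^{-1})=(\mrm{Ad}\,u)(\log v)$, so $L$ is $\mbb{U}$-equivariant (and likewise $\mbb{T}$- and $N_{\mbb{G}}(\mbb{U})$-equivariant).

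I expect the real obstacle to be the honest construction of the Lazard correspondence in this positive-characteristic, scheme-theoretic form --- equivalently, verifying that the group law of $\mbb{U}$, in logarithmic coordinates, is the truncated BCH product, and that the resulting $\exp,\log$ are independent of any auxiliary faithful representation used to write them down.  Bundled with this is the elementary but indispensable bookkeeping of the second paragraph: that $cl(\mbb{U})$, defined through a $\mbb{Q}$-form, still bounds the nilpotence class of $\mbb{U}$ and of $\u$ over $k$, so that the denominator estimate ``primes $\leq n\leq c<p$'' genuinely applies.  Once these are in place, that $L$ is a quasi-logarithm is immediate; this is also why the hypothesis involves $cl(\mbb{U})$ rather than $\dim\mbb{U}$, as the truncation depth of BCH is controlled by the nilpotence class, not the dimension.
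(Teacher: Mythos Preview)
Your proposal is correct and follows essentially the same approach as the paper's sketch: both construct $L$ as the inverse of the exponential map $\exp_k:\u\to\mbb{U}$, noting that the exponential on the $\mbb{Q}$-form is defined over $\mbb{Z}_{(p)}$ when $p>cl(\mbb{U})$ and hence base-changes to $k$, with equivariance inherited from equivariance over $\mbb{Q}$. Your version adds useful detail (the BCH/Dynkin denominator bookkeeping and the Lazard correspondence perspective), but the underlying idea is identical.
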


The main principle here is quite simple.  Under this restriction on $p$, the usual exponent on the $\mbb{Q}$-form $\exp_\mbb{Q}:\mtt{u}_\mbb{Q}\to \mbb{U}_\mbb{Q}$ is an isomorphism defined over $\mbb{Z}_{(p)}$, and hence induces an isomorphism $\exp_k:\mtt{u}\to \mbb{U}$ over $k$.  We define $L$ as the inverse $L=\exp_k^{-1}$.  Equivariance of $L$ under the adjoint $\mbb{U}$-action follows from $\mbb{U}_{\mbb{Q}}$-invariance of $\exp_\mbb{Q}$.

\subsection{Induced gradings on the double}

Consider an algebraic group $\G$ with a fixed quasi-logarithm $L$.  From $L$ we get a map of $\G_{(r)}$-algebras $S(\g^\ast)\to \O(\G_{(r)})$, for each $r$, via the composition $S(\g^\ast)\overset{L^\ast}\to \O(\G)\to \O(\G_{(r)})$.  Since each $x\in \g^\ast$ maps into the augmentation ideal in $\O(\G)$, there is furthermore an induced $\G_{(r)}$-algebra map $l_r:S(\g^\ast)/I_r\to \O(\G_{(r)})$, where $I_r$ is the ideal generated by the $p^r$-th powers of elements in $\g^\ast$.  Rather, $I_r$ is the ideal generated by the image of the augmentation ideal under the $r$-th Frobenius.  We can now take a smash product to arrive at a final algebra map
\begin{equation}\label{eq:Lr}
\mathscr{L}_r:\left(S(\g^\ast)/I_r\right)\# k\G_{(r)}\to \D\G_{(r)}.
\end{equation}
\par

We note that the algebra $S(\g^\ast)/I_r$ is graded, since the ideal $I_r$ is generated by the homogenous elements $x^{p^r}$, $x\in \g^\ast$.  Furthermore, under this grading $k\G_{(r)}$ acts by graded endomorphisms.  Hence the smash product $\left(S(\g^\ast)/I_r\right)\# k\G_{(r)}$ is graded with $\g^\ast$ in degree $1$ and $k\G_{(r)}$ in degree $0$.  This point will be of some significance below.

\begin{lemma}\label{lem:grading}
Suppose $\G$ is smooth and admits a quasi-logarithm $L$.  Then for any $r>0$ the above map 
$\mathscr{L}_r:\left(S(\g^\ast)/I_r\right)\# k\G_{(r)}\to \D\G_{(r)}$ is an isomorphism of algebras.
\end{lemma}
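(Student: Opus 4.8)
The plan is to reduce the claim to the corresponding statement about coordinate algebras. The map $\mathscr{L}_r$ is obtained from the $\G_{(r)}$-equivariant algebra map $l_r:S(\g^\ast)/I_r\to \O(\G_{(r)})$ by applying $(-)\# k\G_{(r)}$; on underlying vector spaces this operation simply tensors with $k\G_{(r)}$, and the $k\G_{(r)}$-factor of $\mathscr{L}_r$ is the identity. Hence $\mathscr{L}_r$ is an isomorphism of algebras if and only if $l_r$ is, and the whole problem becomes showing that $l_r$ is an isomorphism.

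First I would record that the source and target of $l_r$ are local Artinian $k$-algebras of the same dimension. For $\O=\O(\G_{(r)})$, smoothness of $\G$ yields a presentation $\O\cong k[x_1,\dots,x_n]/(x_1^{p^r},\dots,x_n^{p^r})$ with $n=\dim\G$ (as in the proof of Proposition~\ref{prop:381}), so $\dim_k\O=p^{rn}$. For $S(\g^\ast)/I_r$, choosing a basis $x_1,\dots,x_n$ of $\g^\ast$ and using the identity $(\sum_i c_ix_i)^{p^r}=\sum_i c_i^{p^r}x_i^{p^r}$ valid in characteristic $p$, one sees that $I_r=(x_1^{p^r},\dots,x_n^{p^r})$, so $S(\g^\ast)/I_r\cong k[x_1,\dots,x_n]/(x_i^{p^r})$ is also of dimension $p^{rn}$.

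Next I would bring in the quasi-logarithm. By construction $l_r$ maps $\g^\ast$ into the maximal ideal $m$ of $\O$; since $p^r\ge 2$, the ideal cutting $\G_{(r)}$ out of $\G$ is contained in $m_\G^2$, so there is a canonical identification $m/m^2=m_\G/m_\G^2=\g^\ast$. The defining conditions $L(1)=0$ and $\mathrm{d}_1L=\mathrm{id}$ then say exactly that the composite $\g^\ast\to m\to m/m^2$ ($l_r$ followed by the quotient) is the identity of $\g^\ast$; in particular $l_r(x_1),\dots,l_r(x_n)$ span $m/m^2$. As $m$ is nilpotent, Nakayama's lemma gives $m=(l_r(x_1),\dots,l_r(x_n))$ as an ideal, and nilpotence of $m$ then forces the $k$-subalgebra of $\O$ generated by the $l_r(x_i)$ to be all of $\O$, so $l_r$ is surjective. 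Being a surjection between $k$-vector spaces of equal finite dimension, $l_r$ is bijective, hence an isomorphism of algebras; this proves the lemma.

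I expect the one point requiring care to be the bookkeeping in the previous paragraph: verifying that the canonical identifications $m/m^2\cong\g^\ast$ for $\O(\G_{(r)})$ and $m_\G/m_\G^2\cong\g^\ast$ for $\O(\G)$ are compatible with $l_r$ and with the quasi-logarithm. The rest is a dimension count plus Nakayama, and the argument is uniform in $p$ (including $p=2$), since it uses only $p^r\ge 2$.
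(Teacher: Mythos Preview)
Your proof is correct and complete. The paper takes a slightly different route to the isomorphism $l_r$: rather than a dimension count plus Nakayama, it observes that the quasi-logarithm induces a local map of regular local $k$-algebras $S(\g^\ast)_0 \to \O(\G)_1$ of the same Krull dimension which is the identity on cotangent spaces, hence an isomorphism on completions $\widehat{S(\g^\ast)} \cong \widehat{\O_{\G,1}}$; modding out by the ideals generated by $p^r$-th powers of the maximal ideals then exhibits $l_r$ directly as an isomorphism. Your argument is more elementary in that it avoids completions and the structure theory of regular local rings, while the paper's version is a shade more conceptual, establishing bijectivity in one stroke without a separate surjectivity-plus-dimension step. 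Both arguments use smoothness of $\G$ in the same essential way (to ensure $\O(\G)_1$ is regular of dimension $\dim\g$, equivalently to obtain the presentation $\O(\G_{(r)}) \cong k[x_1,\ldots,x_n]/(x_i^{p^r})$), and both are uniform in $p$.
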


\begin{proof}
Recall that $\dim(\G) = \dim(\g)$ whenever $\G$ is smooth (see \cite[I.7.17(1)]{jantzen07}).
The localization at the distinguished maximal ideals of $S(\g^\ast)$ and $\O(\G)$, $S(\g^\ast)_0 \to \O(\G)_1$
is a local map of regular, local $k$-algebras of dimension $\dim\g$ which induces an isomorphism on corresponding
maximal ideals modulo their squares.  Thus, $L$ induces an isomorphism of complete local rings
$\hat L_1:\widehat{S(\g^\ast)}\overset{\cong}\to \widehat{\O_{\G,1}}$ (see e.g.~\cite[proof of Lem.\ 10.28.1]{matsumura}). 
We mod out by the images of the maximal ideals under the $r$-th Frobenius to arrive at an isomorphism
\[
S(\g^\ast)/I_r=\widehat{S(\g^\ast)}/\hat{I_r}\overset{\cong}\longrightarrow 
\widehat{\O_{\G,1}}/(f^{p^r}:f\in \hat{m}_{\G})=\O(\G)/(f^{p^r}:f\in m_\G)=\O(\G_{(r)}).
\]
One can check on elements to see that the above isomorphism is exactly $l_r$.
Thus, $l_r:S(\g^\ast)/I_r\to \O(\G_{(r)})$ and hence $\mathscr{L}_r:\left(S(\g^\ast)/I_r\right)\# k\G_{(r)}\to \D\G_{(r)}$
are isomorphisms.
\end{proof}

As a consequence of Lemma~\ref{lem:grading}, we see that when $\G$ is smooth and admits a quasi-logarithm the double $\D\G_{(r)}$ inherits a grading induced by $\mathscr{L}_r$.  This grading is such that $k\G_{(r)}$ lies in degree $0$ and $\mathscr{L}_r(\g^\ast)$ lies degree $1$.  The coordinate algebra $\O(\G_{(r)})$ will be a graded subalgebra in the double, with $\O(\G_{(r)})_0=k$ and $\O(\G_{(r)})_1=l_r(\g^\ast)$.
\par

We now consider the algebras $\O(\G_{(r)})$ and $\D\G_{(r)}$ as graded (Noetherian, locally finite) algebras.  As with any Noetherian graded algebra, the cohomologies $\Ext^\b_{\O(\G_{(r)})}(M,N)$ and $\Ext^\b_{\D\G_{(r)}}(M,N)$ of finitely generated graded modules inherit natural gradings, in addition to the cohomological gradings.  In particular, the cohomologies $H^\b(\O(\G_{(r)}),k)$ and $H^\b(\D\G_{(r)},k)$ will be graded.  (See e.g.~\cite{atv07}.)  We call this extra grading on cohomology the {\it internal grading}.

\begin{lemma}\label{lem:HOdeg}
Let $\G$ be smooth with a fixed quasi-logarithm.  Consider $H^\b(\O(\G_{(r)}),k)$ with its induced internal grading.  Under the isomorphism $\wedge^\b(\g)\ot S(\g^{(r)}[2])\cong H^\b(\O(\G_{(r)}),k)$ of Proposition~\ref{prop:381}, $\g$ is identified with a subspace of internal degree $1$ and $\g^{(r)}$ is identified with a subspace of internal degree $p^r$. 
\end{lemma}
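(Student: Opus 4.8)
The plan is to reduce the computation of internal degrees to the one‑variable case via the quasi‑logarithm and the K\"unneth theorem, and then to read the answer off an explicit periodic resolution.

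First I would invoke Lemma~\ref{lem:grading}: the quasi‑logarithm produces a \emph{graded} algebra isomorphism $l_r\colon S(\g^\ast)/I_r\xrightarrow{\sim}\O\equiv\O(\G_{(r)})$, and choosing a homogeneous basis of $\g^\ast$ identifies $\O$ with $Z^{\otimes n}$, where $Z=k[t]/(t^{p^r})$ is graded by $\deg t=1$. Since the internal grading on cohomology is the one induced by graded (say, bar) resolutions, the K\"unneth isomorphism $H^\b(\O,k)\cong\bigotimes_{i=1}^{n}H^\b(Z_i,k)$ preserves internal degrees, so it suffices to determine the internal degrees of the algebra generators $a\in H^1(Z,k)$ and $b\in H^2(Z,k)$. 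For this I would write down the minimal graded free resolution of $k$ over $Z$, which is $2$‑periodic:
\[
\cdots\xrightarrow{\ \cdot t^{p^r-1}\ }Z(-p^r-1)\xrightarrow{\ \cdot t\ }Z(-p^r)\xrightarrow{\ \cdot t^{p^r-1}\ }Z(-1)\xrightarrow{\ \cdot t\ }Z\to k\to 0,
\]
so that $P_{2i}=Z(-ip^r)$ and $P_{2i+1}=Z(-ip^r-1)$. Minimality forces the differentials of $\Hom_Z(-,k)$ to vanish, so $H^j(Z,k)$ is one dimensional and concentrated in the internal degree attached to the generator of $P_j$; with the standard normalization (for which $H^1$, dual to the degree‑$1$ generators, sits in internal degree $1$) this puts $a$ in internal degree $1$ and $b$ in internal degree $p^r$. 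Transporting back through K\"unneth, $H^1(\O,k)=\bigoplus_i k\,a_i$ lies in internal degree $1$ and is, under the isomorphism of Proposition~\ref{prop:381}, exactly the copy of $\g=\wedge^1(\g[1])$ --- so that half of the lemma is immediate --- while $H^2(\O,k)$ lives only in internal degrees $2$ and $p^r$, the degree‑$2$ part being $\bigoplus_{i<j}k\,a_ia_j=\wedge^2\g$ and the degree‑$p^r$ part being $\bigoplus_i k\,b_i$.

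The remaining, and main, point is to show that the copy of $\g^{(r)}$ in Proposition~\ref{prop:381}, namely the image of $\sigma_\O\colon\g^{(r)}\to H^2(\O,k)$, is exactly this internal‑degree‑$p^r$ part. This does not follow from Proposition~\ref{prop:381} alone, since a linear complement to $\wedge^2\g$ in $H^2(\O,k)$ is far from unique; I need that $\sigma_\O(\g^{(r)})$ is a \emph{homogeneous} subspace. To get this I would return to the proof of Proposition~\ref{prop:381}, which identifies $\sigma_\O$ with $\sigma_{\Z^{\otimes n}}$, and to the cocycles of Lemma~\ref{lem:349} and Proposition~\ref{prop:439}: the basis class $\sigma_\O(\alpha_i)=\beta_i$ is represented by the explicit $2$‑cocycle $\bar F_{\alpha_i}$ with $\bar F_{\alpha_i}(t^l,t^m)=\delta_{l+m,p^r}$, which is supported on the internal‑degree‑$p^r$ part of $Z^{\otimes n}\otimes Z^{\otimes n}$ and hence homogeneous of internal degree $p^r$ --- equivalently, the deformation $\Z=k[t]/(t^{p^{r+1}})$ is graded and is parametrized by the graded subalgebra $k[t^{p^r}]/(t^{p^{r+1}})\subset\Z$, whose maximal ideal sits in internal degree $p^r$. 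By linearity of $\sigma_\O$ the subspace $\sigma_\O(\g^{(r)})$ is then homogeneous of internal degree $p^r$, and since $p^r>2$ in every case under consideration ($p$ odd, or $r>1$ in characteristic $2$) it is a homogeneous complement to $\wedge^2\g$ in $H^2(\O,k)$, so Proposition~\ref{prop:381} forces it to equal $\bigoplus_i k\,b_i$. I expect this homogeneity claim to be the only real obstacle, and I would argue it in the $\G_a^{\times n}$‑model, where both the deformation and its parametrizing algebra are visibly graded, rather than by trying to grade $\O(\G_{(r+1)})$ directly, since $\O(\G_{(r+1)}/\G_{(r)})$ need not be a homogeneous subalgebra for the grading induced by a quasi‑logarithm at level $r+1$; everything else is bookkeeping with the shifts in the periodic resolution.
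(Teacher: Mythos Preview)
Your proposal is correct, and the heart of it --- that the deformation producing $\sigma_\O$ is itself graded, forcing the associated $2$-cocycles to be homogeneous of internal degree $p^r$ --- is exactly the paper's argument. The paper's proof is simply more direct on both halves. For the $\g$-part it observes that $\O$ is connected graded and generated in degree $1$, which immediately forces $H^1(\O,k)$ to sit in internal degree $1$; your periodic resolution and K\"unneth decomposition give the same answer but are more than is needed. For the $\g^{(r)}$-part the paper grades $\O_{\mathrm{nat}}=\O(\G_{(r+1)})$ directly via the quasi-logarithm $l_{r+1}$, notes that each $Def_\xi$ is then a graded deformation with $\deg\varepsilon=p^r$, and reads off $\deg\bar F_\xi=p^r$ from the multiplication formula.

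Your concern that $\O(\G_{(r+1)}/\G_{(r)})$ might fail to be a homogeneous subalgebra under the quasi-logarithm grading at level $r+1$ is unfounded: that subalgebra is the image of the $p^r$-th-power map on $\O(\G_{(r+1)})$, and under the algebra isomorphism $l_{r+1}$ it corresponds to $k[x_1^{p^r},\dots,x_n^{p^r}]\subset S(\g^\ast)/I_{r+1}$, which is visibly homogeneous with maximal ideal generated in degree $p^r$. So the detour through the $\G_a^{\times n}$-model, while perfectly valid, is unnecessary --- grading $\O(\G_{(r+1)})$ directly works, and that is what the paper does. Your version buys the extra structural information that $H^2(\O,k)$ is concentrated in internal degrees $2$ and $p^r$, but the lemma does not require this.
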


\begin{proof}
The algebra $\O=\O(\G_{(r)})$ is connected graded and generated in degree $1$.  Hence $\g\cong H^1(\O,k)$ is concentrated in degree $1$ (see~\cite{atv07}).
\par

Under the gradings induced by the quasi-logarithm, the reduction
\[
\O_{\mathrm{nat}}=\O(\G_{(r+1)})\to \O
\]
is a homogeneous map, and each deformation $Def_\xi=\O_{\mathrm{nat}}\ot_{\O(\G_{(r+1)}/\G_{(r)})}k[\e]$ associated to an element $\xi\in \g^{(r)}$ is graded, where we take $\deg(\e)=p^r$.  By choosing any graded $k[\e]$-linear identification $\O[\e]\cong Def_\xi$ we see that the associated function $F_\xi:\O\ot \O\to \O$, which is defined by the equation $a\cdot_\xi b=ab+F(a,b)\varepsilon$, is such that $\deg(F(a,b))=\deg(a\ot b)-p^{r}$.  So the Hochschild $2$-cocycle $F_\xi\in \Hom_k(\O\ot \O,\O)$ is degree $p^r$, as is
 its image $\bar{F}_\xi\in \Hom_k(\O\ot\O,k)$.  It follows that $\sigma_\O(\xi)=[\bar{F}_\xi]\in H^2(\O,k)$ is a homogeneous element of degree $p^r$.
\end{proof}

\subsection{Spectra of cohomology}

Recall the map $\Theta_r$ from~\eqref{eq:mapinquestion}, and the definition $|A|=\Spec\!\ H^{ev}(A,k)_{\mrm{red}}$.

\begin{theorem}\label{thm:|log|}
Suppose $\G$ is a smooth algebraic group which admits a quasi-logarithm.  If $r$ is such that $p^r>\dim(\G)$, then 
\[
\theta_r: H^\ast(\G_{(r)},k)\ot S(\g^{(r)}[2])\to H^\ast(\D\G_{(r)},k)
\]
is finite and injective.  Consequently, the scheme map
\[
\Theta_r:|\D\G_{(r)}|\to |k\G_{(r)}|\times (\g^\ast)^{(r)}
\]
is finite and surjective, and furthermore $\dim |\D\G_{(r)}|=\dim|k\G_{(r)}|+\dim\G$.
\end{theorem}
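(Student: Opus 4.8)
The plan is to reduce everything to injectivity of $\theta_r$: finiteness of $\theta_r$ is already Theorem~\ref{thm:fingen}, and once $\theta_r$ is both finite and injective, the assertions about $\Theta_r$ are routine commutative algebra. I would prove injectivity by first showing that, under the hypothesis $p^r>\dim\G$, the Grothendieck spectral sequence~\eqref{specseq-k} degenerates at $E_2$, and then showing that $\theta_r$ becomes the obvious (injective) inclusion on associated graded rings.

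For the degeneration step, I would work with the internal grading on $\D\G_{(r)}$ supplied by the quasi-logarithm via Lemma~\ref{lem:grading} (so $k\G_{(r)}$ sits in internal degree $0$ and $\g^\ast$ in internal degree $1$, and $\O(\G_{(r)})$ is a graded subalgebra). By Proposition~\ref{prop:381} and Lemma~\ref{lem:HOdeg}, under $H^\ast(\O(\G_{(r)}),k)\cong \wedge^\ast(\g)\ot S(\g^{(r)}[2])$ (as $\G_{(r)}$-algebras) the subspace $\g$ lies in internal degree $1$ and $\g^{(r)}$ in internal degree $p^r$; thus a monomial with $a$ exterior factors from $\g$ and $b$ symmetric factors from $\g^{(r)}$ has internal degree $a+bp^r$ and cohomological degree $a+2b$. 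A monomial of internal degree $j$ therefore has $a\equiv j\pmod{p^r}$, and since $0\le a\le \dim\g<p^r$ this determines $a$, hence $b$, hence the cohomological degree $a+2b$. So when $p^r>\dim\G$ each internal degree of $H^\ast(\O(\G_{(r)}),k)$ is concentrated in a single cohomological degree. Viewing~\eqref{specseq-k} as a spectral sequence of internally graded algebras (the filtration on $H^\ast(\D\G_{(r)},k)$ is by internally graded subspaces and $E_2^{s,t}(k)=H^s(\G_{(r)},H^t(\O(\G_{(r)}),k))$ is internally graded accordingly), it follows that in each fixed internal degree $E_2^{s,t}(k)$ is supported on a single value of $t$. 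Since $d_\ell$ preserves internal degree but strictly lowers $t$ (by $\ell-1\ge 1$) for $\ell\ge2$, every such differential vanishes, so $E_2=E_\infty$.

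Granted $E_2=E_\infty$, we get a bigraded algebra isomorphism $\mathrm{gr}\,H^\ast(\D\G_{(r)},k)\cong H^\ast(\G_{(r)},\wedge^\ast(\g))\ot S(\g^{(r)}[2])$. I would filter the source of $\theta_r$ by $\widetilde F^{\,p}=\bigoplus_{m\ge p}H^m(\G_{(r)},k)\ot S(\g^{(r)}[2])$. The inflation summand lands in the bottom of the spectral sequence filtration, since it is the edge map $E_2^{\ast,0}=E_\infty^{\ast,0}\hookrightarrow H^\ast(\D\G_{(r)},k)$; and $\sigma'_\D$ lands in filtration degree $0$ with $\mathrm{res}\circ\sigma'_\D=\sigma'_\O$ equal to the tensor-factor inclusion, by Proposition~\ref{prop:g2HD} together with Proposition~\ref{prop:381}. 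Hence $\theta_r(\widetilde F^{\,p})\subseteq F^pH^\ast(\D\G_{(r)},k)$, and by multiplicativity of the spectral sequence the induced map on associated graded rings sends $a\ot s$ (with $a\in H^p(\G_{(r)},k)$) to $a\cdot s = a\ot s$ inside $H^p(\G_{(r)},\wedge^\ast(\g))\ot S(\g^{(r)}[2])$; that is, $\mathrm{gr}\,\theta_r$ is the inclusion $H^\ast(\G_{(r)},k)\ot S(\g^{(r)}[2])=H^\ast(\G_{(r)},\wedge^0(\g))\ot S(\g^{(r)}[2])\hookrightarrow H^\ast(\G_{(r)},\wedge^\ast(\g))\ot S(\g^{(r)}[2])$, which is injective. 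A filtered map (with filtrations finite in each degree) whose associated graded is injective is itself injective, so $\theta_r$ is injective. Finally, restricting $\theta_r$ to even degrees and passing to reduced rings, $H^{ev}(\D\G_{(r)},k)_{\mathrm{red}}$ is finite over $H^{ev}(\G_{(r)},k)_{\mathrm{red}}\ot S(\g^{(r)}[2])$ via a comorphism whose kernel is nilpotent (indeed trivial), so $\Theta_r$ is finite and surjective; finiteness of the ring extension then gives $\dim|\D\G_{(r)}|=\dim H^{ev}(\G_{(r)},k)_{\mathrm{red}}+\dim S(\g^{(r)}[2])=\dim|k\G_{(r)}|+\dim\G$.

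I expect the main obstacle to be the degeneration step — specifically, recognizing that the quasi-logarithm grading is rigid enough that once $p^r$ exceeds $\dim\G$ each internal degree of $H^\ast(\O(\G_{(r)}),k)$ occupies a single cohomological degree, and that this numerical fact alone collapses the spectral sequence. After that, the associated-graded argument for injectivity (using only that $\sigma'_\D$ lifts $\sigma'_\O$) and the passage to schemes are essentially formal.
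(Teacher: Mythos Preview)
Your proposal is correct, and the overall architecture matches the paper's: use the internal grading on $\D\G_{(r)}$ coming from the quasi-logarithm (Lemma~\ref{lem:grading}, Lemma~\ref{lem:HOdeg}) to control the spectral sequence~\eqref{specseq-k}, deduce that $\gr\theta_r$ is injective, and finish with standard commutative algebra.  The difference lies in how the internal grading is exploited.  The paper does \emph{not} prove degeneration: it only observes that the internal-degree-$p^r\mathbb{Z}$ part of $E_2^{\ast,\ast}$ coincides with the subalgebra $H^\ast(\G_{(r)},k)\otimes S(\g^{(r)})$ of permanent cocycles, and then argues (inductively over the pages) that since every element in that internal degree is a cocycle, nothing there can be a coboundary either, so $H^\ast(\G_{(r)},k)\otimes S(\g^{(r)})\hookrightarrow E_\infty^{\ast,\ast}$.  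Your argument is sharper: writing the internal degree of $\wedge^a(\g)\otimes S^b(\g^{(r)})$ as $a+bp^r$ with $0\le a\le\dim\g<p^r$, you note that the internal degree determines $(a,b)$ and hence the cohomological row $t=a+2b$; thus each internal degree of $E_2^{\ast,\ast}$ is concentrated in a single row, and every $d_\ell$ ($\ell\ge2$) vanishes.  This gives $E_2=E_\infty$ outright, a stronger statement than the paper needs, and makes the identification of $\gr\theta_r$ with the evident inclusion $H^\ast(\G_{(r)},\wedge^0\g)\otimes S(\g^{(r)})\hookrightarrow H^\ast(\G_{(r)},\wedge^\ast\g)\otimes S(\g^{(r)})$ transparent.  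What the paper's more localized argument buys is that it isolates exactly the piece (internal degree in $p^r\mathbb{Z}$) relevant to $\theta_r$, which is the template reused later in Theorems~\ref{thm:|log|2} and~\ref{thm:|log|3}; your degeneration is cleaner here but slightly obscures that reusable pattern.
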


\begin{proof}
We freely use the notation of the proof of Theorem~\ref{thm:fingen}, and omit the shift $[2]$ in the symmetric algebra to ease notation.  According to Lemma~\ref{lem:grading}, $\D\G_{(r)}$ inherits a natural algebra grading via the isomorphism $\mathscr{L}_r$ of~\eqref{eq:Lr}, $\O$ is a graded subalgebra, and the exact sequence $1\to \O\to \D\G_{(r)}\to k\G_{(r)}\to 1$ is a sequence of graded algebra maps, where $k\G_{(r)}$ is taken to be entirely in degree $0$.  In this case the spectral sequence of Proposition~\ref{prop:specseq} inherits an internal grading so that all differentials are homogeneous of degree $0$.
\par

The internal grading at the $E_2$-page is such that the degree on each $E_2^{ij}=H^i(\G_{(r)},H^j(\O))$ is induced by the degree on $H^j(\O)$.  In particular, each summand $\wedge^{j_1}\g\ot S^{j_2}(\g)\subset H^{j_1+j_2}(\O)$ is of internal degree $j_1+p^rj_2$, by Lemma~\ref{lem:HOdeg}, and the corresponding summands in the decomposition
\[
\begin{array}{rl}
H^i(\G_{(r)},H^j(\O,k))&=H^i\left(\G_{(r)},\bigoplus_{j_1+2j_2=j}\wedge^{j_1}(\g)\ot S^{j_2}(\g^{(r)})\right)\\
&=\bigoplus_{j_1+2j_2=j}H^i\left(\G_{(r)},\wedge^{j_1}(\g)\right)\ot S^{j_2}(\g^{(r)})
\end{array}
\]
are of respective degrees $j_1+j_2p^r$.
\par

Since $\dim\g<p^r$, the index $j_1$ is such that $0\leq j_1<p^r$.  Hence the degree $p^r\mathbb{Z}$ portion of the $E_2$-page is exactly the prescribed subalgebra of permanent cocycles
\begin{equation}\label{eq:888}
(E^{i,j}_2)_{p^r\mathbb{Z}}=H^i(\G_{(r)},k)\ot S^{j/2}(\g^{(r)})\Rightarrow \left(H^\ast(\D\G_{(r)},k)\right)_{p^r\mathbb{Z}},
\end{equation}
where $S^{j/2}(\g^{(r)})$ is taken to be $0$ when $j$ is odd.
\par

By homogeneity of the differentials, and the fact that all of the elements of degrees $p^r\mathbb{Z}$ in $E_2^{\ast,\ast}$ are cocycles by~\eqref{eq:888}, we see that no elements of degrees $p^r\mathbb{Z}$ are coboundaries.  One can make the same argument at each subsequent page of the spectral sequence to find that that the map $H^i(\G_{(r)},k)\ot S^t(\g^{(r)})\to E_s^{i,2t}$ is injective for all $i,\ t$, and $s$.  It follows that $\gr\theta_r:H^\b(\G_{(r)},k)\ot S(\g^{(r)})\to E^{\ast,\ast}_\infty$ is injective.
\par

Injectivity of the associated graded map $\gr\theta_r$ implies that $\theta_r:H^\b(\G_{(r)},k)\ot S(\g^{(r)})\to H^\b(\D\G_{(r)},k)$ is injective.  By Theorem~\ref{thm:fingen}, $\theta_r$ is also finite.  After taking even degrees and reducing,
\[
\theta^{ev}_{\mathrm{red}}:H^{ev}(\G_{(r)},k)_{\mathrm{red}}\ot S(\g^{(r)})\to H^\b(\D\G_{(r)},k)_{\mathrm{red}}
\]
remains injective and finite.  In particular, $\theta^{ev}_{\mathrm{red}}$ is an integral extension.  Thus, the map on spectra induced
by $\theta^{ev}$ is finite and surjective~\cite[Thm.\ 9.3]{matsumura}.  The asserted computation of dimension follows.
\end{proof}

Note that the dimension of $|\O(\G_{(r)})|$ is equal to $\dim\G$, by Proposition~\ref{prop:381} (and~\cite[I.7.17(1)]{jantzen07}).  Hence the equality of dimensions of Theorem~\ref{thm:|log|} can also be written as
\[
\dim |\D\G_{(r)}|=\dim|k\G_{(r)}|+\dim|\O(\G_{(r)})|.
\]
Under stronger assumptions on $p$ or $r$ we can significantly strengthen the conclusion of Theorem~\ref{thm:|log|}.  Indeed one can leverage the internal grading on the given spectral sequence, as in the proof of Theorem~\ref{thm:|log|}, to show that $\Theta_r$ is an isomorphism in such circumstances.

\begin{theorem}\label{thm:|log|2}
Suppose $\G$ is a smooth algebraic group which admits a quasi-logarithm.  Suppose additionally that $r$ is such that $p^r>2\dim\G$.  Then the image of the injective algebra map
\[
\theta_r: H^\ast(\G_{(r)},k)\ot S(\g^{(r)}[2])\to H^\ast(\D\G_{(r)},k)
\]
admits an $H^\ast(\G_{(r)},k)\ot S(\g^{(r)}[2])$-module complement $J$ which consists entirely of nilpotent elements in $H^\ast(\D\G_{(r)},k)$.  Furthermore, the induced map on reduced spectrums
\[
\Theta_r:|\D\G_{(r)}|\to |k\G_{(r)}|\times (\g^\ast)^{(r)}
\]
is an isomorphism.
\end{theorem}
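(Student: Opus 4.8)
The plan is to leverage the internal grading on $\D\G_{(r)}$ provided by Lemma~\ref{lem:grading}, refining the spectral-sequence analysis of the proof of Theorem~\ref{thm:|log|} but now exploiting the stronger inequality $p^r>2\dim\G$. Throughout write $n=\dim\G=\dim\g$, $\O=\O(\G_{(r)})$, $\D=\D\G_{(r)}$, and $C^\ast=H^\ast(\G_{(r)},k)\ot S(\g^{(r)}[2])$. As in the proof of Theorem~\ref{thm:|log|}, the grading on $\D$ endows the spectral sequence~\eqref{specseq-k} with an internal grading for which every differential has internal degree $0$; consequently each page is internally graded, and the spectral-sequence filtration on $H^\ast(\D,k)$ is strictly compatible with its internal grading, so that $\gr H^\ast(\D,k)\cong E^{\ast,\ast}_\infty$ as internally graded algebras and, in particular, the internal degrees occurring in $H^\ast(\D,k)$ are among those occurring in $E^{\ast,\ast}_2$.

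First I would pin down which internal degrees occur. By Proposition~\ref{prop:381} and Lemma~\ref{lem:HOdeg} the summand $\wedge^{j_1}(\g)\ot S^{j_2}(\g^{(r)})$ of $H^\ast(\O,k)$ carries internal degree $j_1+p^rj_2$ with $0\le j_1\le n$, while $H^i(\G_{(r)},k)$ is concentrated in internal degree $0$. Since $n<p^r$, two consequences follow: every internally homogeneous summand of $E^{\ast,\ast}_2$, hence of $E^{\ast,\ast}_\infty$, hence of $H^\ast(\D,k)$, has internal degree in $\bigcup_{c\ge 0}\{cp^r,cp^r+1,\dots,cp^r+n\}$; and the part of $E^{\ast,\ast}_2$ of internal degree divisible by $p^r$ is exactly the collection of summands with $j_1=0$, namely $\bigoplus_{i,j_2}H^i(\G_{(r)},k)\ot S^{j_2}(\g^{(r)})$, which — exactly as in the proof of Theorem~\ref{thm:|log|} — consists of permanent cocycles and contains no coboundaries. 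Here one uses that $\sigma_\D(\g^{(r)})$ has internal degree $p^r$, which holds because the deformation $\D_{\mathrm{nat}}$ is graded once one sets $\deg(\e)=p^r$, just as in the proof of Lemma~\ref{lem:HOdeg}.

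Next I would produce the complement. Theorem~\ref{thm:|log|} already gives that $\theta_r$ is injective, so the internally graded Hilbert series of $\mathrm{im}(\theta_r)$ coincides with that of $C^\ast$, which by the previous paragraph also equals the Hilbert series of the subspace $H^\ast(\D,k)_{p^r\mathbb{Z}}$ of elements whose internal degree is divisible by $p^r$. As each bigraded piece is finite dimensional and $\mathrm{im}(\theta_r)\subseteq H^\ast(\D,k)_{p^r\mathbb{Z}}$, we conclude $\mathrm{im}(\theta_r)=H^\ast(\D,k)_{p^r\mathbb{Z}}$. Now let $J$ be the span of the internally homogeneous components of $H^\ast(\D,k)$ of internal degree \emph{not} divisible by $p^r$; then $H^\ast(\D,k)=\mathrm{im}(\theta_r)\oplus J$, and $J$ is a $C^\ast$-submodule since $C^\ast$ lies in internal degrees divisible by $p^r$, so multiplication by $C^\ast$ preserves internal degree modulo $p^r$.

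It remains to see that $J$ consists of nilpotents, and this is the step where $p^r>2n$ is essential. By the first paragraph, any nonzero internally homogeneous element of $H^\ast(\D,k)$ has internal degree congruent mod $p^r$ to an integer in $[0,n]$. Fix a nonzero internally homogeneous $x\in J$; its internal degree is then congruent mod $p^r$ to some $d_0$ with $1\le d_0\le n$. If $x$ were not nilpotent then $x^{2^k}\neq 0$ for all $k\ge 0$; writing $d_k$ for the residue mod $p^r$ of the internal degree of $x^{2^k}$, the first paragraph gives $d_k\le n$ for all $k$. But the internal degree of $x^{2^k}$ is twice that of $x^{2^{k-1}}$, and since $d_{k-1}\le n$ and $2n<p^r$ the doubling entails no reduction mod $p^r$, so $d_k=2d_{k-1}$ and hence $d_k=2^kd_0$; since $d_0\ge 1$ this exceeds $n$ once $k$ is large, a contradiction. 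Hence every internally homogeneous element of $J$ is nilpotent — the odd cohomological degree pieces being automatically square-zero in odd characteristic — so $J$ consists of nilpotent elements and, in particular, $J^{ev}$ lies in the nilradical of $H^{ev}(\D,k)$. Finally, by Theorem~\ref{thm:|log|} the induced map $\theta^{ev}_{\mathrm{red}}\colon H^{ev}(\G_{(r)},k)_{\mathrm{red}}\ot S(\g^{(r)})\to H^\ast(\D,k)_{\mathrm{red}}$ is finite and injective, and its cokernel is the image of $J^{ev}$, which vanishes after reduction; hence $\theta^{ev}_{\mathrm{red}}$ is an isomorphism and $\Theta_r$ is an isomorphism of schemes. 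The main obstacle, I expect, is the careful bookkeeping with the internal grading — verifying that the spectral-sequence filtration is strictly compatible with it so that residues modulo $p^r$ may be tracked from $E^{\ast,\ast}_\infty$ back to $H^\ast(\D,k)$ — together with recognizing that iterating \emph{squares} (rather than passing to powers $x^{n+1}$) is precisely what keeps the relevant degrees below $p^r$, which is where the hypothesis $2\dim\G<p^r$ is genuinely used.
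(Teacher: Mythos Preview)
Your proposal is correct and follows essentially the same approach as the paper: both arguments pass to the internal grading induced by the quasi-logarithm, identify $\mathrm{im}(\theta_r)$ with the part of $H^\ast(\D\G_{(r)},k)$ of internal degree divisible by $p^r$, take $J$ to be the span of the remaining homogeneous pieces, and then use the vanishing of cohomology in internal degrees $\dim\G+1,\dots,p^r-1$ (mod $p^r$) to force nilpotence of everything in $J$. The only cosmetic difference is that the paper works directly with the $\mathbb{Z}/p^r\mathbb{Z}$-grading and exhibits an explicit exponent $m$ with $m\cdot\widetilde{\deg(\xi)}\in\{\dim\G+1,\dots,p^r-1\}$, whereas you track residues mod $p^r$ of the $\mathbb{Z}$-grading and argue by iterated squaring; both exploit $p^r>2\dim\G$ in exactly the same way.
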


\begin{proof}
Fix a quasi-logarithm on $\G$, and consider the induced gradings on cohomology.  Note that we can consider all of our $\mathbb{Z}$-graded spaces as $\mbb{Z}/p^r\mbb{Z}$-graded spaces, via the projection $\mbb{Z}\to \mbb{Z}/p^r\mbb{Z}$.  For convenience, we employ $\mbb{Z}/p^r\mbb{Z}$-gradings in this proof.  Under these new grading $\theta_r$ is an isomorphism onto the degree $0$ portion of $H^\ast(\D\G_{(r)},k)$.  For an element $a\in \mbb{Z}/p^r\mbb{Z}$ we let $\tilde{a}$ denote the unique representative of $a$ in $\{0,\dots, p^r-1\}$.
\par

Just as in Lemma~\ref{lem:HOdeg}, one can check that the natural map $\sigma_\D:\g^{(r)}\to H^2(\D\G_{(r)},k)$ has image in degree $p^r=0$ 
with respect to the $\mbb{Z}/p^r\mbb{Z}$-grading on cohomology.  We also have that the inflation $H^\ast(\G_{(r)},k)\to H^\ast(\D\G_{(r)},k)$ has image entirely in degree $0$, since the projection $\D\G_{(r)}\to k\G_{(r)}$ is graded with $k\G_{(r)}$ entirely in degree $0$.  By the same spectral sequence calculation as was given in the proof of Theorem~\ref{thm:|log|}, we find that
\[
\theta_r:H^\ast(\G_{(r)},k)\ot S(\g^{(r)})\to H^\ast(\D\G_{(r)},k)
\]
is an isomorphism onto the degree $0$ portion of the cohomology of $\D\G_{(r)}$.
\par

Under the induced $\mbb{Z}/p^r\mbb{Z}$-grading on the spectral sequence $\{E_s^{\ast,\ast}\}$ of the proof of Theorem~\ref{thm:|log|} we have
\[
(E^{i,j}_2)_{0}=H^i(\G_{(r)},k)\ot S^{j/2}(\g^{(r)})\ \ \mathrm{and}\ \ (E^{i,j}_2)_{a}=0\ \ \text{for each }a=\dim\G+1,\dots, p^r-1.
\]
This implies that $\left(H^\ast(\D\G_{(r)},k)\right)_a=0$ for each such $a$.  Hence, any homogenous element 
$\xi \in H^\ast(\D\bG_{(r)},k)$ of nonzero internal degree $\deg(\xi)$ satisfies  $\xi^m=0$, where
\[
m=\left\{\begin{array}{ll}
\left\lfloor\frac{p^r}{\widetilde{\deg(\xi)}}\right\rfloor & \text{if }\widetilde{\deg(\xi)}\nmid p^r\\\\
\frac{p^r}{\widetilde{\deg(\xi)}}-1 & \text{if }\widetilde{\deg(\xi)} \mid p^r,
\end{array}\right.
\]
since $\widetilde{\deg(\xi)}$ will be among $\dim\G+1,\dots, p^r-1$.  Said another way, the subspace $J$ spanned by elements of nonzero degree is contained in the nilradical, and the inclusion
\[
\left(H^{ev}(\D\G_{(r)},k)_0\right)_{\mathrm{red}}\to H^{ev}(\D\G_{(r)})_{\mathrm{red}}
\]
is therefore an isomorphism.  Since $\theta_r$ is an isomorphism onto the degree $0$ portion of cohomology, it follows that
\[
\theta^{ev}_{\mathrm{red}}:H^{ev}(\G_{(r)},k)_{\mathrm{red}}\ot S(\g^{(r)})\to H^{ev}(\D\G_{(r)},k)_{\mathrm{red}}
\]
is an isomorphism.  We take the spectrum to find that $\Theta_r$ is an isomorphism.
\end{proof}

\begin{theorem}\label{thm:|log|3}
Suppose $\G$ is a smooth algebraic group which admits a quasi-logarithm, and that $p>\dim\G+1$.  Then the image of $\theta_r$ in $H^\ast(\D\G_{(r)},k)$ has a complement $J$ which consists entirely of nilpotents, just as in Theorem~\ref{thm:|log|2}.  Furthermore, the map
\[
\Theta_r:|\D\G_{(r)}|\to |k\G_{(r)}|\times (\g^\ast)^{(r)}
\]
is an isomorphism for all $r$.
\end{theorem}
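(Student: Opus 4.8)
The plan is to reuse the spectral-sequence machinery from the proofs of Theorems~\ref{thm:|log|} and~\ref{thm:|log|2} essentially unchanged, and to replace only the final arithmetic that produces the nilpotent complement. First I would fix a quasi-logarithm on $\G$, equip $\D\G_{(r)}$ with the resulting $\mbb{Z}$-grading of Lemma~\ref{lem:grading} (so that $k\G_{(r)}$ sits in internal degree $0$, $\O(\G_{(r)})_1=l_r(\g^\ast)$, etc.), and pass to the coarser $\mbb{Z}/p^r\mbb{Z}$-grading on cohomology as in the proof of Theorem~\ref{thm:|log|2}. The hypothesis $p>\dim\G+1$ gives $\dim\G<p-1\leq p^r-1$, so in particular $\dim\G<p^r$ for every $r$ and the running hypothesis of Theorem~\ref{thm:|log|} is satisfied. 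Re-running the internal-graded spectral sequence argument of Proposition~\ref{prop:specseq} verbatim, I would record the same two facts used in Theorem~\ref{thm:|log|2}: first, that $\theta_r$ is an isomorphism onto the internal-degree-$0$ part of $H^\ast(\D\G_{(r)},k)$, because $(E_2^{i,j})_0=H^i(\G_{(r)},k)\ot S^{j/2}(\g^{(r)})$ and all of these classes are permanent cocycles (by Proposition~\ref{prop:g2HD} and the inflation $H^\ast(\G_{(r)},k)\to H^\ast(\D\G_{(r)},k)$, graded of internal degree $0$); and second, that $(E_2^{i,j})_a=0$, hence $H^\ast(\D\G_{(r)},k)_a=0$, for every $a$ with $\widetilde a\in\{\dim\G+1,\dots,p^r-1\}$, since by Lemma~\ref{lem:HOdeg} the internal degrees occurring in $E_2^{\ast,\ast}$ are exactly the residues of $\{0,1,\dots,\dim\G\}$ modulo $p^r$. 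Here $\widetilde a$ denotes the representative of $a$ in $\{0,\dots,p^r-1\}$, and this vanishing range is nonempty because $\dim\G+1\leq p-1\leq p^r-1$.

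The one genuinely new step is showing that the $H^\ast(\G_{(r)},k)\ot S(\g^{(r)}[2])$-module complement $J$ to the image of $\theta_r$ — the span of the cohomology classes of nonzero internal degree — consists of nilpotent elements for \emph{every} $r$, rather than only for $r$ with $p^r>2\dim\G$. Since the nilradical is an ideal it suffices to treat a homogeneous class $\xi$ of nonzero internal degree, and here I would exploit that $p>\dim\G+1$ forces the residue $d=\widetilde{\deg(\xi)}$ to satisfy $1\leq d\leq\dim\G<p$. Thus $d$ is prime to $p$, hence invertible modulo $p^r$, so the positive multiples of $d$ exhaust $\mbb{Z}/p^r\mbb{Z}$; choosing $m\geq1$ with $md\equiv\dim\G+1\pmod{p^r}$ places $\xi^m$ in internal degree $\dim\G+1$, where $H^\ast(\D\G_{(r)},k)$ vanishes by the second fact above, so $\xi^m=0$. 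This is exactly the place where the weaker hypothesis "$p>\dim\G+1$" substitutes for the "room-counting" estimate "$p^r>2\dim\G$" of Theorem~\ref{thm:|log|2}.

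Finally I would conclude just as in Theorem~\ref{thm:|log|2}: from the internal-degree decomposition
\[
H^{ev}(\D\G_{(r)},k)=H^{ev}(\D\G_{(r)},k)_0\oplus\big(J\cap H^{ev}(\D\G_{(r)},k)\big)
\]
together with $J\cap H^{ev}(\D\G_{(r)},k)\subseteq\sqrt 0$, reduction restricts to an isomorphism $\big(H^{ev}(\D\G_{(r)},k)_0\big)_{\mathrm{red}}\overset{\cong}\to H^{ev}(\D\G_{(r)},k)_{\mathrm{red}}$. Composing with the isomorphism $\theta_r$ induces in internal degree $0$, and using that reduction commutes with $-\ot S(\g^{(r)})$, I get that
\[
\theta^{ev}_{\mathrm{red}}:H^{ev}(\G_{(r)},k)_{\mathrm{red}}\ot S(\g^{(r)})\to H^{ev}(\D\G_{(r)},k)_{\mathrm{red}}
\]
is an isomorphism, and taking $\Spec$ yields that $\Theta_r$ is an isomorphism for all $r$.

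I expect the main obstacle to be conceptual rather than computational: recognizing that the correct way to trade "$p^r>2\dim\G$" for "$p>\dim\G+1$", while keeping the conclusion uniformly in $r$, is to exploit invertibility of the occurring internal degrees modulo $p^r$ (so that powers of a homogeneous class sweep through a vanishing graded piece) in place of the length estimate of Theorem~\ref{thm:|log|2}. Everything else — the grading on $\D\G_{(r)}$, the internal degrees of $\sigma_\D$ and of the inflation, and the identification of the degree-$0$ part of the spectral sequence — is already available from Lemmas~\ref{lem:grading} and~\ref{lem:HOdeg} and the proofs of Theorems~\ref{thm:|log|} and~\ref{thm:|log|2}, so those steps should go through with at most cosmetic changes.
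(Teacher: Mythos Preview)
Your proposal is correct and follows essentially the same strategy as the paper: grade the spectral sequence via the quasi-logarithm, identify the degree-$0$ part with the image of $\theta_r$, and kill each homogeneous class of nonzero internal degree by raising it to a power that lands in a vanishing graded piece. The only difference is the choice of grading modulus. You work with the $\mbb{Z}/p^r\mbb{Z}$-grading and target the class $\dim\G+1$, using that the occurring nonzero degrees lie in $\{1,\dots,\dim\G\}\subset(\mbb{Z}/p^r\mbb{Z})^\times$; the paper instead passes one step further to the $\mbb{Z}/p\mbb{Z}$-grading, so that the grading group is the field $\mbb{F}_p$, every nonzero degree is automatically a unit, and one can target the class $-1$ (noting $(E_2^{\ast,\ast})_{p-1}=0$ since $\dim\G\leq p-2$). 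The paper's choice is marginally cleaner---it avoids tracking the specific range $\{1,\dots,\dim\G\}$ and the invertibility check mod $p^r$---but the two arguments are interchangeable and neither buys materially more than the other.
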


\begin{proof}
Our argument will be similar to that of Theorem~\ref{thm:|log|2}.  Via the projection $\mbb{Z}\to \mbb{Z}/p\mbb{Z}$ we get $\mbb{Z}/p\mbb{Z}$-gradings on the spectral sequence $\{E_r^{\ast,\ast}\}$ and the cohomology $H^\ast(\D\G_{(r)},k)$.  We have, under these $\mbb{Z}/p\mbb{Z}$-gradings, that
\[
(E^{i,j}_2)_{0}=H^i(\G_{(r)},k)\ot S^{j/2}(\g^{(r)}),\ \ (E^{i,j}_2)_{p-1}=(E^{i,j}_2)_{-1}=0,
\]
and that $\theta_r$ is an isomorphism onto the degree $0$ portion of cohomology $H^\ast(\D\G_{(r)},k)_0$.  Consider now any homogeneous element $\xi\in H^\ast(\D\G_{(r)},k)$ of degree $d\neq 0$.  Since $\mbb{Z}/p\mbb{Z}=\mbb{F}_p$ is a field there is a positive integer $d'\in \mbb{Z}$ which reduces to $-d^{-1}$ mod $p$.  We then find that $\xi^{d'}=0$, since
\[
\deg(\xi^{d'})=-d^{-1}d=-1\ \ \mathrm{and}\ \ H^\ast(\D\G_{(r)},k)_{-1}=0.
\]
Hence the subspace $J$ of element of nonzero degree is contained in the nilradical.  Just as before, this implies that $\theta_r$ induces an isomorphism
\[
\theta^{ev}_{\mathrm{red}}:H^{ev}(\G_{(r)},k)_{\mathrm{red}}\ot S(\g^{(r)})\to H^{ev}(\D\G_{(r)},k)_{\mathrm{red}},
\]
and that $\Theta_r$ is an isomorphism as well.
\end{proof}

One considers the examples of Section~\ref{sect:ql} to arrive at

\begin{corollary}\label{cor:|class|}
Let $\G$ be a general linear group, simple algebraic group, Borel subgroup in a simple algebraic group, or a unipotent subgroup in a semisimple algebraic group which is normalized by a maximal torus.  Suppose that $p$ is very good for $\G$, or that $p>cl(\G)$ in the unipotent case.
\begin{itemize}
\item If $p>\dim\G+1$ then $\Theta_r$ is an isomorphism for all $r$.
\item For arbitrary $p$ satisfying the hypothesis, the map $\Theta_r$ is an isomorphism whenever $r$ is such that $p^r>2\dim\G$.
\end{itemize}
\end{corollary}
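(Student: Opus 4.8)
The plan is to deduce the corollary directly from Theorems~\ref{thm:|log|2} and~\ref{thm:|log|3}: those results reduce the assertion to the single input that $\G$ is a smooth algebraic group admitting a quasi-logarithm, and the examples assembled in Section~\ref{sect:ql} supply precisely such quasi-logarithms for each group on the list. So the proof is essentially a matter of invoking the right prior result in each case.

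First I would check smoothness. Every group named in the statement is smooth: $\GL_n$ and simple algebraic groups are smooth, a Borel subgroup of a simple group is a smooth closed subgroup, and a unipotent subgroup $\mbb{U}$ of a semisimple group that is normalized by a maximal torus is smooth (it is, after passing to a $\mbb{Q}$-form and under the prime hypothesis, the isomorphic image of its Lie algebra under the exponential, and in any case a product of root subgroups). Hence Theorems~\ref{thm:|log|2} and~\ref{thm:|log|3} become applicable as soon as a quasi-logarithm is produced.

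Next I would invoke the case-by-case constructions of Section~\ref{sect:ql}. Proposition~\ref{prop:ql1} gives a quasi-logarithm for $\GL_n$ (with all primes counted as very good); Corollary~\ref{cor:ql2} gives one for any simple $\G$ with $p$ very good, and also for any Borel subgroup $\mbb{B}$ of such a $\G$ by restriction; and Proposition~\ref{prop:ql3} gives one for a unipotent subgroup $\mbb{U}$ normalized by a maximal torus whenever $p>cl(\mbb{U})$. In each case the numerical hypothesis imposed in the corollary ("$p$ very good for $\G$", or "$p>cl(\G)$" in the unipotent case) is exactly the hypothesis needed to run the corresponding example. Thus, under the stated assumptions, $\G$ is a smooth algebraic group admitting a quasi-logarithm.

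Finally I would apply Theorem~\ref{thm:|log|3} when $p>\dim\G+1$ to conclude that $\Theta_r$ is an isomorphism for all $r$, and Theorem~\ref{thm:|log|2} when $p^r>2\dim\G$ to get the corresponding conclusion for that value of $r$. Since the corollary merely repackages these two theorems, there is no real obstacle; the only points requiring attention are bookkeeping ones — confirming that the notion "very good for $\G$" in the hypothesis matches the definition used in Corollary~\ref{cor:ql2}, and that the invariant $cl(\G)$ of the unipotent case coincides with the $cl(\mbb{U})$ of Proposition~\ref{prop:ql3}. It is also worth recording that $cl(\mbb{U})<\dim\mbb{U}$, so the hypothesis $p>cl(\G)$ does not already place one in the range $p>\dim\G+1$; this is what makes both bullets genuinely relevant in the unipotent case.
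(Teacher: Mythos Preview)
Your proposal is correct and follows essentially the same approach as the paper: the paper simply states that ``one considers the examples of Section~\ref{sect:ql}'' to deduce the corollary, meaning one combines the quasi-logarithm constructions of Proposition~\ref{prop:ql1}, Corollary~\ref{cor:ql2}, and Proposition~\ref{prop:ql3} with Theorems~\ref{thm:|log|2} and~\ref{thm:|log|3}. Your write-up is more detailed (checking smoothness explicitly and recording the bookkeeping on ``very good'' and $cl(\G)$), but the logical route is identical.
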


\section{Results for support varieties}
\label{sect:supp}

For a Hopf algebra $A$ and finite dimensional $A$-module $M$ we let $|A|_M$ denote the support variety for $M$.  This is the closed, reduced, subscheme in $|A|$ defined by the kernel of the algebra map
\begin{equation}\label{eq:90}
-\ot M:H^{ev}(A,k)\to \Ext_A^{ev}(M,M).
\end{equation}
In this section we consider the support $|\D\G_{(r)}|_M$ associated to a finite dimensional $\D\G_{(r)}$-module $M$.  We show that there is a finite scheme map
\[
\Theta_r^M:|\D\G_{(r)}|_M\to |k\G_{(r)}|_M\times (\g^\ast)^{(r)}
\]
for any $M$ with trivial restriction to $\O(\G_{(r)})$, and that $\Theta_r^M$ is an isomorphism whenever $M$ is irreducible and $\G$ is a classical group at a large prime or large $r$.

\subsection{Generalities for support varieties}

Under the natural identification 
\[
\Ext_A^\ast(M,M)=\Ext^\ast_A(k,M\ot M^\ast)=H^\ast(A,M\ot M^\ast),
\]
\eqref{eq:90} corresponds to the mapping
\[
\mathrm{coev}_\ast^M:H^{ev}(A,k)\to H^{ev}(A,M\ot M^\ast)
\]
induced by the coevaluation $\mathrm{coev}^M:k\to M\ot M^\ast$~\cite[Prop.\ 2.10.8]{egno15}.  The algebra structure on $H^{ev}(A,M\ot M^\ast)$ is induced by the algebra structure on $M\ot M^\ast\cong \End_k(M,M)$.  By~\cite[Thm.\ VII.4.1]{maclane} (see also~\cite{suarez04}) the image of $H^{ev}(A,k)$ lies in the center of $H^{ev}(A,M\ot M^\ast)$.
\par

For $\G$ smooth, and any finite dimensional $\D\G_{(r)}$-module $M$, $\theta_r$ produces an algebra map
\begin{equation}\label{eq:fM}
f_{r,M}:H^{ev}(\G_{(r)},k)\ot S(\g^{(r)}[2])\to H^{ev}(\D\G_{(r)},M\ot M^\ast).
\end{equation}
Explicitly, $f_{r,M}$ is the composite
\[
H^{ev}(\G_{(r)},k)\ot S(\g^{(r)}[2])\overset{\theta_r}\longrightarrow H^{ev}(\D\G_{(r)},k)\overset{\mathrm{coev}^M_\ast}\longrightarrow H^{ev}(\D\G_{(r)},M\ot M^\ast).
\]
By the definition of $f_{r,M}$, one sees that the reduced subscheme in $|k\G_{(r)}|\times (\g^\ast)^{(r)}$ defined by the kernel of $f_{r,M}$ is exactly the image of $|\D\G_{(r)}|_M$ under $\Theta_r:|\D\G_{(r)}|\to |k\G_{(r)}|\times(\g^\ast)^{(r)}$.
\par

By the material of Section~\ref{sect:qlog} we understand that $\Theta_r$ is often an isomorphism.  However, by finiteness of $\Theta_r$ in general, we can adapt an argument of~\cite{friedlanderparshall87} in all circumstances to arrive at

\begin{proposition}[{\cite[Prop.\ 1.5]{friedlanderparshall87}}]
A finite dimensional $\D\G_{(r)}$-module $M$ is projective (or, equivalently, injective) as a  $\D\G_{(r)}$-module if and only if $\Theta_r\left(|\D\G_{(r)}|_M\right)=\{0\}$.
\end{proposition}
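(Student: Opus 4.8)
The plan is to reduce the statement to the standard characterisation of projectivity by support varieties, which is now available since $\D\G_{(r)}$ is a finite-dimensional Hopf algebra --- hence a Frobenius, in particular self-injective, algebra --- and since $H^\ast(\D\G_{(r)},k)$ is finitely generated with every $\Ext^\ast_{\D\G_{(r)}}(N,N')$ a finite module over it, by Theorems~\ref{thm:fingen} and~\ref{thm:fingen2}. Concretely, I would first reduce (for $M\neq 0$; the zero module is trivially projective) to the assertion that $M$ is projective if and only if $|\D\G_{(r)}|_M=\{0\}$. For this, note that $|\D\G_{(r)}|_M$ is a nonempty conical closed subscheme of $|\D\G_{(r)}|$ --- conical because it is the vanishing locus of the homogeneous kernel of the graded algebra map $-\ot M$, and nonempty because $1\mapsto\mathrm{id}_M\neq 0$ --- while $\Theta_r$ is finite by Theorem~\ref{thm:fingen}, hence has finite fibres and carries the vertex to the vertex (as $\theta_r$ is a graded algebra map). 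So $\Theta_r\big(|\D\G_{(r)}|_M\big)=\{0\}$ forces $|\D\G_{(r)}|_M\subseteq\Theta_r^{-1}(0)$, a finite conical set, hence equal to $\{0\}$; the reverse implication is immediate. One direction of the reduced statement is easy: if $M$ is projective then $\Ext^{>0}_{\D\G_{(r)}}(M,M)=0$, so the algebra map $-\ot M\colon H^{ev}(\D\G_{(r)},k)\to\Ext^{ev}_{\D\G_{(r)}}(M,M)$ kills the augmentation ideal, whence $|\D\G_{(r)}|_M=\{0\}$.

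The substantive direction --- and the main obstacle --- is to prove that $|\D\G_{(r)}|_M=\{0\}$ implies $M$ is projective; here I would adapt the argument of~\cite[Prop.\ 1.5]{friedlanderparshall87}, with finite generation playing the role of the rank-variety input used there (which is unavailable since $\D\G_{(r)}$ is neither commutative nor cocommutative). For each of the finitely many simple $\D\G_{(r)}$-modules $S$, the graded space $\Ext^\ast_{\D\G_{(r)}}(S,M)=H^\ast(\D\G_{(r)},S^\ast\ot M)$ is a finite $H^\ast(\D\G_{(r)},k)$-module by Theorem~\ref{thm:fingen2}, and it is a module over the ring $\Ext^\ast_{\D\G_{(r)}}(M,M)$ by Yoneda composition, compatibly with the action of $H^\ast(\D\G_{(r)},k)$ through $-\ot M$. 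Since $|\D\G_{(r)}|_M=\{0\}$, the augmentation ideal of $H^{ev}(\D\G_{(r)},k)$ acts nilpotently on $\Ext^\ast_{\D\G_{(r)}}(M,M)$, hence on $\Ext^\ast_{\D\G_{(r)}}(S,M)$; being simultaneously finite over $H^\ast(\D\G_{(r)},k)$, this module is finite-dimensional over $k$, so $\Ext^n_{\D\G_{(r)}}(S,M)=0$ for $n\gg 0$. Taking $N$ a common bound over all simple $S$ and choosing a minimal injective resolution $0\to M\to I^0\to I^1\to\cdots$, one has $\Hom_{\D\G_{(r)}}(S,I^n)=\Ext^n_{\D\G_{(r)}}(S,M)$ for $n\geq 1$ (the connecting maps $\Hom_{\D\G_{(r)}}(S,I^{n-1})\to\Hom_{\D\G_{(r)}}(S,I^n)$ vanish in a minimal resolution), so $\mathrm{soc}(I^n)=0$ and hence $I^n=0$ for $n>N$. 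Thus $M$ has finite injective dimension, and self-injectivity of $\D\G_{(r)}$ then forces $M$ to be injective, equivalently projective.

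I expect the genuine work above to be the bookkeeping that identifies the two $H^{ev}(\D\G_{(r)},k)$-module structures on $\Ext^\ast_{\D\G_{(r)}}(S,M)$, so that nilpotence of the augmentation ideal transfers, together with the socle computation for the minimal injective resolution; both are routine homological algebra. The essential point is simply that the finite generation results of Theorems~\ref{thm:fingen} and~\ref{thm:fingen2} supply precisely what is needed to make the classical support-variety argument go through in this noncommutative, noncocommutative setting.
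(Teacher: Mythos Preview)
Your proposal is correct and follows essentially the same approach as the paper, which simply directs the reader to repeat the argument of \cite[Prop.\ 1.5]{friedlanderparshall87} using that $\rep(\D\G_{(r)})$ is Frobenius. You have spelled out that argument in full: the reduction from $\Theta_r(|\D\G_{(r)}|_M)=\{0\}$ to $|\D\G_{(r)}|_M=\{0\}$ via finiteness of $\Theta_r$ and conicality, then the passage from trivial support to finite injective dimension via finite generation (Theorems~\ref{thm:fingen} and~\ref{thm:fingen2}) and a minimal injective resolution, and finally the conclusion from self-injectivity. The compatibility of the two $H^{ev}(\D\G_{(r)},k)$-actions on $\Ext^\ast_{\D\G_{(r)}}(S,M)$ that you flag as bookkeeping is indeed standard (it is the content of the reference \cite{suarez04} already invoked in the paper).
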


\begin{proof}
One simply repeats the proof of~\cite[Prop.\ 1.5]{friedlanderparshall87}, using the fact that $\rep(\D\G_{(r)})$ is a Frobenius category~\cite{larsonsweedler69}.
\end{proof}

For the remainder of the section we seek to give a more precise description of the support $|\D\G_{(r)}|_M$ for a finite dimensional $\D\G_{(r)}$-module
$M$ whose restriction to $\O(\G_{(r)})$ is trivial (and thus arises as the restriction along the the natural quotient 
$\D\G_{(r)} \twoheadrightarrow kG_{(r)}$ of a $kG_{(r)}$module  which we also denote by $M$).  By Proposition \ref{prop:simpmod}, this condition is satisfied by any irreducible $\D\G_{(r)}$-module.  Whenever $M$ satisfies this condtion, there is a natural inflation map $H^*(\G_{(r)},M) \to H^*(\D\G_{(r)},M)$.

In the statement of the following lemma, we consider the algebra map
\[
\theta_{r,M}:H^{ev}(\G_{(r)},M\ot M^\ast)\ot S(\g^{(r)}[2])\to H^{ev}(\D\G_{(r)},M\ot M^\ast)
\]
induced by the inflation $H^{ev}(\G_{(r)},M\ot M^\ast)\to H^{ev}(\D\G_{(r)},M\ot M^\ast)$ and the map from $S(\g^{(r)}[2])$ defined via $\sigma'_\D$ as above.

\begin{lemma}\label{lem:suppsq}
For any finite dimensional $\D\G_{(r)}$-module $M$ whose restriction to $\O(\G_{(r)})$ is trivial, the following diagram commutes
\[
\xymatrix{
H^{ev}(\G_{(r)},k)\ot S(\g^{(r)}[2])\ar[d]_{\theta_r}\ar@{-->}[drr]|{f_{r,M}}\ar[rr]^(.45){\mathrm{coev}^M_\ast\ot id_S} & & H^{ev}(\G_{(r)},M\ot M^\ast)\ot S(\g^{(r)}[2])\ar[d]^{\theta_{r,M}}\\
H^{ev}(\D\G_{(r)},k)\ar[rr]_(.45){\mathrm{coev}^M_\ast} & & H^{ev}(\D\G_{(r)},M\ot M^\ast).
}
\]
\end{lemma}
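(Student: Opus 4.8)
The dashed arrow $f_{r,M}$ is, by its definition in \eqref{eq:fM}, the composite $\mathrm{coev}^M_\ast\circ\theta_r$ along the left and bottom edges of the square, so the lower-left triangle commutes tautologically. It therefore remains to prove that the outer square commutes, i.e.\ that
\[
\mathrm{coev}^M_\ast\circ\theta_r \ = \ \theta_{r,M}\circ(\mathrm{coev}^M_\ast\ot\mathrm{id}_S)\colon\ H^{ev}(\G_{(r)},k)\ot S(\g^{(r)}[2])\ \longrightarrow\ H^{ev}(\D\G_{(r)},M\ot M^\ast).
\]
The plan is to split both $\theta_r$ and $\theta_{r,M}$ into their two constituent legs and to reduce the identity to two general facts: naturality of the inflation map, and multiplicativity of $\mathrm{coev}^M_\ast$.

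First I would record the structural input. Since the $\O(\G_{(r)})$-action on $M$ is trivial, the same holds for $M^\ast$ and hence for $M\ot M^\ast$; thus $M\ot M^\ast$ is the inflation along the Hopf surjection $\D\G_{(r)}\twoheadrightarrow k\G_{(r)}$ of a $k\G_{(r)}$-module. Because restriction of scalars along a Hopf surjection is a strong monoidal functor which carries left duals to left duals (the antipodes being compatible), the $\D\G_{(r)}$-coevaluation $\mathrm{coev}^M\colon k\to M\ot M^\ast$ is the inflation of the corresponding $k\G_{(r)}$-coevaluation. In particular the inflation maps $H^{ev}(\G_{(r)},-)\to H^{ev}(\D\G_{(r)},-)$ are available both on the coefficient module $k$ and on $M\ot M^\ast$, and—being natural with respect to morphisms of inflated coefficient modules—when applied to the morphism $\mathrm{coev}^M$ they produce a commuting square, namely
\[
\mathrm{coev}^M_\ast\circ\mathrm{infl}\ =\ \mathrm{infl}\circ\mathrm{coev}^M_\ast\colon\ H^{ev}(\G_{(r)},k)\ \longrightarrow\ H^{ev}(\D\G_{(r)},M\ot M^\ast).
\]

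Next I would recall that, under the identification $M\ot M^\ast\cong\End_k(M)$, the map $\mathrm{coev}^M$ is the unital algebra map $k\to\End_k(M)$, $1\mapsto\mathrm{id}_M$; hence $\mathrm{coev}^M_\ast\colon H^{ev}(\D\G_{(r)},k)\to H^{ev}(\D\G_{(r)},M\ot M^\ast)=\Ext^{ev}_{\D\G_{(r)}}(M,M)$ is an algebra homomorphism with central image, exactly as already used in Section~\ref{sect:supp}. Now both $\theta_r$ and $\theta_{r,M}$ are the multiplicative products of their two legs: $\theta_r(x\ot s)=\mathrm{infl}(x)\cdot\sigma'_\D(s)$, and $\theta_{r,M}(y\ot s)=\mathrm{infl}(y)\cdot\bigl(\mathrm{coev}^M_\ast\circ\sigma'_\D\bigr)(s)$. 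Evaluating the two sides of the displayed identity on a simple tensor $x\ot s$ and using multiplicativity of $\mathrm{coev}^M_\ast$, the left-hand side is $\mathrm{coev}^M_\ast(\mathrm{infl}(x))\cdot\mathrm{coev}^M_\ast(\sigma'_\D(s))$ while the right-hand side is $\mathrm{infl}(\mathrm{coev}^M_\ast(x))\cdot\mathrm{coev}^M_\ast(\sigma'_\D(s))$; these agree by the naturality square above, because the two $S(\g^{(r)}[2])$-legs are literally the same map.

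The argument is essentially formal. The step that warrants the most care—and the only potential obstacle—is the identification of $M\ot M^\ast$, as a $\D\G_{(r)}$-module, with an inflation from $k\G_{(r)}$ together with the matching of the two coevaluations, i.e.\ the compatibility of left duality with the Hopf projection $\D\G_{(r)}\to k\G_{(r)}$. This is precisely what makes the relevant inflation maps, and their naturality in the coefficient module, available in the first place, so it should be spelled out explicitly even though it is routine.
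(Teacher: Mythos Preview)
Your proof is correct and follows essentially the same approach as the paper's: both arguments split $\theta_r$ and $\theta_{r,M}$ into their two legs, observe that the $S(\g^{(r)}[2])$-legs coincide by definition as the composite $\mathrm{coev}^M_\ast\circ\sigma'_\D$, and handle the $H^{ev}(\G_{(r)},k)$-leg via naturality (functoriality) of inflation with respect to the coefficient map $\mathrm{coev}^M$. Your write-up is somewhat more explicit about why the reduction to the two factors is justified (multiplicativity of $\mathrm{coev}^M_\ast$) and why the inflation map on $M\ot M^\ast$ is available, but the underlying argument is the same.
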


\begin{proof}
It suffices to prove that the two maps
\[
H^{ev}(\G_{(r)},k)\ot S(\g^{(r)}[2])\rightrightarrows H^{ev}(\D\G_{(r)},M\ot M^\ast)
\]
agree on the factors $H^{ev}(\G_{(r)},k)$ and $S(\g^{(r)}[2])$ independently.  The two restrictions to $S(\g^{(r)}[2])$ are equal, since they are both defined as the composition
\[
S(\g^{(r)}[2])\overset{\sigma'_\D}\longrightarrow H^\ast(\D\G_{(r)},k)\overset{\mathrm{coev}^M_\ast}\longrightarrow H^\ast(\D\G_{(r)},M\ot M^\ast). 
\]
So we need only establish commutativity of the diagram
\[
\xymatrix{
H^{ev}(\G_{(r)},k)\ar[d]_{\mathrm{res}}\ar[r]^(.4){\mathrm{coev}^M_\ast} & H^{ev}(\G_{(r)},M\ot M^\ast)\ar[d]^{\mathrm{res}}\\
H^{ev}(\D\G_{(r)},k)\ar[r]^(.42){\mathrm{coev}^M_\ast} & H^{ev}(\D\G_{(r)},M\ot M^\ast),
}
\]
which follows by functoriality of the inflation map.
\end{proof}

\begin{proposition}\label{prop:contain}
For any finite dimensional $\D\G_{(r)}$-module $M$ whose restriction to $\O(\G_{(r)})$ is trivial (for example, if $M$ is irreducible), the restriction of $\Theta_r: \D\G_{(r)}|_M\to |k\G_{(r)}|\times (\g^\ast)^{(r)}$ to $:|\D\G_{(r)}|_M$ factors through the closed subscheme $|k\G_{(r)}|_M\times (\g^\ast)^{(r)}$,
determining a finite map of schemes \[
\Theta_{r,M}:|\D\G_{(r)}|_M\to |k\G_{(r)}|_M\times (\g^\ast)^{(r)}.
\]
\end{proposition}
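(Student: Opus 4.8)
The plan is to derive both assertions formally from Lemma~\ref{lem:suppsq} together with the finiteness of $\Theta_r$.  Recall, from the discussion following~\eqref{eq:fM}, that $\Theta_r\big(|\D\G_{(r)}|_M\big)$ is the reduced closed subscheme of $|k\G_{(r)}|\times(\g^\ast)^{(r)}=\Spec\big(H^{ev}(\G_{(r)},k)_{\mathrm{red}}\ot S(\g^{(r)}[2])\big)$ cut out by $\ker(f_{r,M})$, where $f_{r,M}=\mathrm{coev}^M_\ast\circ\theta_r$.  On the other hand, writing $\mathrm{coev}^M_\ast:H^{ev}(\G_{(r)},k)\to H^{ev}(\G_{(r)},M\ot M^\ast)$ for the map defining $|k\G_{(r)}|_M$, the product $|k\G_{(r)}|_M\times(\g^\ast)^{(r)}$ is the reduced closed subscheme cut out by the ideal generated by $\ker(\mathrm{coev}^M_\ast)$, which, since $S(\g^{(r)}[2])$ is flat over $k$, is precisely $\ker\big(\mathrm{coev}^M_\ast\ot\mathrm{id}_{S(\g^{(r)}[2])}\big)$.

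First I would establish the containment $\Theta_r\big(|\D\G_{(r)}|_M\big)\subseteq|k\G_{(r)}|_M\times(\g^\ast)^{(r)}$.  By Lemma~\ref{lem:suppsq} the map $f_{r,M}$ factors as $f_{r,M}=\theta_{r,M}\circ\big(\mathrm{coev}^M_\ast\ot\mathrm{id}_{S(\g^{(r)}[2])}\big)$, so $\ker\big(\mathrm{coev}^M_\ast\ot\mathrm{id}_{S(\g^{(r)}[2])}\big)\subseteq\ker(f_{r,M})$; taking vanishing loci of the corresponding reduced quotients reverses this inclusion and yields the containment.  Consequently the composite $|\D\G_{(r)}|_M\hookrightarrow|\D\G_{(r)}|\overset{\Theta_r}{\longrightarrow}|k\G_{(r)}|\times(\g^\ast)^{(r)}$ factors through the closed immersion $|k\G_{(r)}|_M\times(\g^\ast)^{(r)}\hookrightarrow|k\G_{(r)}|\times(\g^\ast)^{(r)}$, and I would take $\Theta_{r,M}$ to be the resulting morphism.

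It then remains to check that $\Theta_{r,M}$ is finite.  Since $\theta_r$ is a finite ring map by Theorem~\ref{thm:fingen}, so is the induced map $H^{ev}(\G_{(r)},k)_{\mathrm{red}}\ot S(\g^{(r)}[2])\to H^{ev}(\D\G_{(r)},k)_{\mathrm{red}}$, and hence the coordinate ring of $|\D\G_{(r)}|_M$ --- being a quotient of $H^{ev}(\D\G_{(r)},k)_{\mathrm{red}}$ --- is a finite module over $H^{ev}(\G_{(r)},k)_{\mathrm{red}}\ot S(\g^{(r)}[2])$, so that $|\D\G_{(r)}|_M\to|k\G_{(r)}|\times(\g^\ast)^{(r)}$ is finite.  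Because this map factors through the closed subscheme $|k\G_{(r)}|_M\times(\g^\ast)^{(r)}$, whose coordinate ring is itself a quotient of $H^{ev}(\G_{(r)},k)_{\mathrm{red}}\ot S(\g^{(r)}[2])$, the coordinate ring of $|\D\G_{(r)}|_M$ is automatically finite over it, whence $\Theta_{r,M}$ is finite.  The whole argument is essentially formal once Lemma~\ref{lem:suppsq} is available; the only point demanding a little care --- and the one I would write out in detail --- is the bookkeeping with reduced structures in the first paragraph and the two identifications of closed subschemes there, but I do not expect this to present any genuine difficulty.
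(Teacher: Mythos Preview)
Your argument is correct and matches the paper's proof essentially verbatim: both use Lemma~\ref{lem:suppsq} to obtain the kernel containment $\ker(\mathrm{coev}^M_\ast\ot\mathrm{id}_S)\subseteq\ker(f_{r,M})$ and then pass to vanishing loci. The only difference is that you spell out the finiteness of $\Theta_{r,M}$ explicitly, whereas the paper leaves it implicit in the finiteness of $\Theta_r$.
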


\begin{proof}
The image of $\Theta_r|_{|\D\G_{(r)}|_M}$ is the closed subscheme defined by the kernel of $f_{r,M}$.  By Lemma~\ref{lem:suppsq},
$f_{r,M}$ factors through the product map
\[
\mathrm{coev}^M_\ast\ot id_S:H^{ev}(\G_{(r)},k)\ot S(\g^{(r)}[2])\to H^{ev}(\G_{(r)},M\ot M^\ast)\ot S(\g^{(r)}[2]),
\]
and hence
\[
\mathrm{ker}(\mathrm{coev}^M_\ast)\ot S(\g^{(r)}[2])\subset \ker(f_{r,M}).
\]
It follows that $\Theta_r|_{|\D\G_{(r)}|_M}$ factors through $|k\G_{(r)}|_M\times (\g^\ast)^{(r)}$.
\end{proof}

\subsection{Support varieties for classical groups}

We now consider irreducible modules and classical groups.  We fix $\mbb{G}$ a smooth algebraic group.

\begin{lemma}\label{lem:0Minj}
Suppose $\G$ admits a quasi-logarithm and that $V$ is an irreducible $\D\G_{(r)}$-module.  Suppose additionally that $p^r>\dim\G$.  Then the map
\[
\theta_{r,V}:H^{ev}(\G_{(r)},V\ot V^\ast)\ot S(\g^{(r)}[2])\to H^{ev}(\D\G_{(r)},V\ot V^\ast)
\]
is injective.
\end{lemma}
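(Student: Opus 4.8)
The plan is to transcribe the proof of Theorem~\ref{thm:|log|}, replacing the trivial module $k$ throughout by $M:=V\ot V^\ast$. Since $V$ is irreducible, Proposition~\ref{prop:simpmod} shows that $\O:=\O(\G_{(r)})$ acts trivially on $V$, hence trivially on $M$; in particular $M$ is inflated from a $k\G_{(r)}$-module. Fix a quasi-logarithm on $\G$. By Lemma~\ref{lem:grading} this endows $\D\G_{(r)}$ with an internal algebra grading in which $k\G_{(r)}$ sits in degree $0$ and $\O$ is a graded subalgebra; consequently the Grothendieck spectral sequence
\[
E_2^{s,t}(M)=H^s(\G_{(r)},H^t(\O,M))\ \Rightarrow\ H^{s+t}(\D\G_{(r)},M)
\]
of Proposition~\ref{prop:specseq}, together with its abutment, inherits compatible internal gradings under which all differentials are homogeneous of internal degree $0$. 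Because $M$ restricts from $k\G_{(r)}$, it sits in internal degree $0$.

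Since $\O$ acts trivially on $M$ we have $H^t(\O,M)=H^t(\O,k)\ot M$ as internally graded $\G_{(r)}$-modules. Combining Proposition~\ref{prop:381} (the decomposition $H^\ast(\O,k)=\wedge^\ast(\g)\ot S(\g^{(r)}[2])$) with Lemma~\ref{lem:HOdeg} (which places $\g$ in internal degree $1$ and $\g^{(r)}$ in internal degree $p^r$), and using that each $S^{j_2}(\g^{(r)})$ is a trivial $\G_{(r)}$-module, we obtain
\[
E_2^{s,t}(M)=\bigoplus_{j_1+2j_2=t}H^s(\G_{(r)},\wedge^{j_1}\g\ot M)\ot S^{j_2}(\g^{(r)}),
\]
with the $(j_1,j_2)$-summand concentrated in internal degree $j_1+j_2p^r$. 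Here the hypothesis $p^r>\dim\G=\dim\g$ enters: as $0\le j_1\le\dim\g<p^r$, the only summands whose internal degree is divisible by $p^r$ are those with $j_1=0$, so
\[
\big(E_2^{s,2t}(M)\big)_{tp^r}=H^s(\G_{(r)},M)\ot S^t(\g^{(r)}),
\]
which is precisely the image of the module multiplication $E_2^{s,0}(M)\cdot\big(S^t(\g^{(r)})\subset E_2^{0,2t}(k)\big)$, the inclusion on the right being $\sigma_\O$ together with the invariance of Corollary~\ref{cor:g2inv}.

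Next I would observe that the classes in $S(\g^{(r)})\subset E_2^{0,\ast}(k)$ are permanent cocycles --- exactly the content of the lifting $\sigma_\D$ of $\sigma_\O$ from Proposition~\ref{prop:g2HD}, already exploited in Theorem~\ref{thm:|log|}. The bottom-row classes $E_s^{\ast,0}(M)$ are trivially $d_s$-cocycles, and $d_s$ acts as a derivation of the $\{E_s^{\ast,\ast}(k)\}$-module $\{E_s^{\ast,\ast}(M)\}$ (Proposition~\ref{prop:specseq}); hence every element of $\big(E_s^{\ast,\ast}(M)\big)_{p^r\mbb{Z}}=E_s^{\ast,0}(M)\cdot S(\g^{(r)})$ is a $d_s$-cocycle for every $s$. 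Since the differentials are internally homogeneous, no element of internal degree in $p^r\mbb{Z}$ can be a coboundary either, so the $p^r\mbb{Z}$-graded part of the spectral sequence does not change from $E_2$ to $E_\infty$:
\[
\big(E_\infty^{s,2t}(M)\big)_{tp^r}=H^s(\G_{(r)},M)\ot S^t(\g^{(r)}).
\]
Finally $\theta_{r,V}$ is filtered for the spectral-sequence filtration on $H^{ev}(\D\G_{(r)},M)$: the inflation $H^s(\G_{(r)},M)\to H^s(\D\G_{(r)},M)$ is the bottom-row edge map $E_2^{s,0}(M)\to E_\infty^{s,0}(M)$, while by Proposition~\ref{prop:g2HD} the symbol of $\sigma'_\D(b)$ in $E_\infty^{0,\ast}(k)$ is $b\in S(\g^{(r)})$. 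Thus $\mathrm{gr}\,\theta_{r,V}$ is, on the summand $H^s(\G_{(r)},M)\ot S^t(\g^{(r)})$, the isomorphism onto $\big(E_\infty^{s,2t}(M)\big)_{tp^r}$ just described; in particular $\mathrm{gr}\,\theta_{r,V}$ is injective, and therefore so is $\theta_{r,V}$.

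The step that requires the most care is the last one: one must check that the symbol of the product $\mathrm{inf}(a)\cdot\sigma'_\D(b)$ in $E_\infty$ genuinely equals $a\cdot b$ and does not fall into a deeper filtration level. This is settled by the injectivity $H^s(\G_{(r)},M)\ot S^t(\g^{(r)})\hookrightarrow E_\infty^{s,2t}(M)$ established above, which forces the two filtration degrees to add exactly --- precisely as in the corresponding step of Theorem~\ref{thm:|log|}. Everything else is the verbatim argument of Theorem~\ref{thm:|log|}, with the multiplicative structure of the trivial-coefficient spectral sequence replaced throughout by the module structure of $\{E_s^{\ast,\ast}(V\ot V^\ast)\}$ over it.
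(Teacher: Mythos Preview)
Your proposal is correct and follows essentially the same approach as the paper: the paper's proof simply notes that $\O$ acts trivially on $V\ot V^\ast$ (so that $V\ot V^\ast$ is concentrated in internal degree $0$), rewrites the $E_2$-page as $H^i(\G_{(r)},\wedge^{j_1}\g\ot V\ot V^\ast)\ot S^{j_2}(\g^{(r)})$, and then explicitly says ``argue just as in the proof of Theorem~\ref{thm:|log|}.'' You have faithfully unpacked that reference, including the module-spectral-sequence version of the internal-grading argument and the passage from injectivity of $\gr\theta_{r,V}$ to injectivity of $\theta_{r,V}$.
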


\begin{proof}
Take $\O=\O(\G_{(r)})$.  It suffices to show that the associated graded map $\mathrm{gr}\ \!\theta_{r,V}$ is injective, under some filtration.
\par

We consider the Grothendieck spectral sequence
\[
E_2^{i,j}=H^i(\G_{(r)},H^j(\O,V\ot V^\ast))\Rightarrow H^{i+j}(\D\G_{(r)},V\ot V^\ast)
\]
induced by the sequence $1\to \O\to \D\G_{(r)}\to k\G_{(r)}\to 1$.  Recall, from Proposition~\ref{prop:simpmod}, that the $\O$ acts trivially on $V$ and $V^\ast$.  Whence we may rewrite the above spectral sequence as
\[
E_2^{i,j}=H^i(\G_{(r)},(\wedge^{j_1}\g)\ot V\ot V^\ast)\ot S^{j_2}(\g^{(r)}[2])\Rightarrow H^{i+j}(\D\G_{(r)},V\ot V^\ast).
\]
Since $\O$ acts trivially on $V$ and $V^\ast$, the $\D\G_{(r)}$-module $V\ot V^\ast$ is graded and concentrated in degree $0$, under the $\mathbb{Z}$-grading on $\D\G_{(r)}$ induced by any quasi-logarithm on $\G$.  Now one can argue just as in the proof of Theorem~\ref{thm:|log|}, using the grading on the above spectral sequence induced by the quasi-logarithm, to conclude that $\theta_{r,V}$ is injective.
\end{proof}

\begin{theorem}\label{thm:|log|V}
Suppose $\G$ admits a quasi-logarithm and that $V$ is an irreducible $\D\G_{(r)}$-module.  Then the scheme map
\[
\Theta_{r,V}:|\D\G_{(r)}|_V\to |k\G_{(r)}|_V\times (\g^\ast)^{(r)}
\]
is finite and surjective.  Furthermore, when $p>\dim\G+1$ or $p^r>2\dim \G$ the map $\Theta_{r,V}$ is an isomorphism.
\end{theorem}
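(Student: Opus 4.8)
The plan is to deduce everything from the commutative square of Lemma~\ref{lem:suppsq}, the injectivity statement of Lemma~\ref{lem:0Minj}, and the structural facts about $\Theta_r$ established in Section~\ref{sect:qlog}. Finiteness of $\Theta_{r,V}$ is already recorded in Proposition~\ref{prop:contain}: since $V$ is irreducible it restricts trivially to $\O(\G_{(r)})$ by Proposition~\ref{prop:simpmod}, so that proposition applies and produces the finite map $\Theta_{r,V}$. Thus only surjectivity, and then bijectivity under the stronger hypotheses, remain to be shown.

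For surjectivity I would proceed as follows. By Proposition~\ref{prop:contain}, the image of $|\D\G_{(r)}|_V$ under $\Theta_r$ is the closed subscheme of $|k\G_{(r)}|\times(\g^\ast)^{(r)}$ cut out by $\ker f_{r,V}$, and by Lemma~\ref{lem:suppsq} the map $f_{r,V}$ factors as $\theta_{r,V}\circ(\mathrm{coev}^V_\ast\otimes\mathrm{id}_S)$. Since $\mathrm{coev}^V_\ast\otimes\mathrm{id}_S$ is precisely the map whose kernel defines $|k\G_{(r)}|_V\times(\g^\ast)^{(r)}$ inside $|k\G_{(r)}|\times(\g^\ast)^{(r)}$, this factorization gives the inclusion $\ker(\mathrm{coev}^V_\ast\otimes\mathrm{id}_S)\subseteq\ker f_{r,V}$ for free; using that $\theta_{r,V}$ is injective (Lemma~\ref{lem:0Minj}, which applies once $p^r>\dim\G$, in particular in both regimes appearing in the last sentence of the theorem) this becomes an equality. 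Hence $\Theta_r$ restricted to $|\D\G_{(r)}|_V$ has image exactly $|k\G_{(r)}|_V\times(\g^\ast)^{(r)}$, and since $\Theta_{r,V}$ is finite, hence closed, it is surjective.

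For the bijectivity claim, assume $p>\dim\G+1$ or $p^r>2\dim\G$; in either case $p^r>\dim\G$, so the argument of the previous paragraph applies. Under these hypotheses Theorem~\ref{thm:|log|3} (respectively Theorem~\ref{thm:|log|2}) shows that $\Theta_r:|\D\G_{(r)}|\to|k\G_{(r)}|\times(\g^\ast)^{(r)}$ is an isomorphism of schemes. An isomorphism carries the closed reduced subscheme $|\D\G_{(r)}|_V$ isomorphically onto its image, and the image was identified in the previous paragraph with $|k\G_{(r)}|_V\times(\g^\ast)^{(r)}$. Therefore $\Theta_{r,V}$ is an isomorphism.

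I expect the whole weight of the theorem to rest on Lemma~\ref{lem:0Minj}, the injectivity of $\theta_{r,V}$. That in turn comes from running the Grothendieck spectral sequence for $1\to\O(\G_{(r)})\to\D\G_{(r)}\to k\G_{(r)}\to 1$ with coefficients in $V\otimes V^\ast$, using triviality of the $\O(\G_{(r)})$-action on $V\otimes V^\ast$ to split the coefficients off, and then exploiting the internal grading coming from a quasi-logarithm: because $\dim\g<p^r$, the part of the $E_2$-page concentrated in internal degrees divisible by $p^r$ is exactly $H^\ast(\G_{(r)},V\otimes V^\ast)\otimes S(\g^{(r)}[2])$ and consists of permanent cocycles, which forces injectivity of $\theta_{r,V}$ just as in the proof of Theorem~\ref{thm:|log|}. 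Everything else is formal manipulation of the support-variety definitions together with the commutative diagrams of Lemma~\ref{lem:suppsq}.
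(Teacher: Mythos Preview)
Your proposal is correct and follows essentially the same route as the paper: finiteness via Proposition~\ref{prop:contain}, surjectivity by factoring $f_{r,V}=\theta_{r,V}\circ(\mathrm{coev}^V_\ast\otimes\mathrm{id}_S)$ via Lemma~\ref{lem:suppsq} and invoking injectivity of $\theta_{r,V}$ from Lemma~\ref{lem:0Minj}, and the isomorphism statement by restricting the global isomorphism $\Theta_r$ of Theorems~\ref{thm:|log|2}--\ref{thm:|log|3} to the support variety. You are in fact slightly more explicit than the paper in flagging that Lemma~\ref{lem:0Minj} carries the hypothesis $p^r>\dim\G$; the paper's proof invokes the lemma for the unconditional surjectivity claim without restating that hypothesis.
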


\begin{proof}
Finiteness follows by finiteness of $\Theta_r$.  So we need only check surjectivity to establish the first claim.  We omit the shift $[2]$ in the symmetric algebra to ease notation.  As discussed above, the image of $\Theta_{r,V}$ is the subscheme associated to the kernel of the algebra map
\[
f_{r,V}:H^{ev}(\G_{(r)},k)\ot S(\g^{(r)})\to H^{ev}(\D\G_{(r)},V\ot V^\ast),
\]
which was defined at~\eqref{eq:fM}.  Now, by Lemma~\ref{lem:suppsq}, we have that $f_{r,V}$ factors as the composite of
\[
\mathrm{coev}^V_\ast\ot id_S:H^{ev}(\G_{(r)},k)\ot S(\g^{(r)})\to H^{ev}(\G_{(r)},V\ot V^\ast)\ot S(\g^{(r)})
\]
with
\[
\theta_{r,V}:H^{ev}(\G_{(r)},V\ot V^\ast)\ot S(\g^{(r)})\to H^{ev}(\D\G_{(r)},V\ot V^\ast).
\]
By Lemma~\ref{lem:0Minj}, $\theta_{r,V}$ is injective.  Hence it follows that $\ker(f_{r,V})=\ker(\mathrm{coev}^V_\ast)\ot S(\g^{(r)})$ and subsequently
\[
\Theta_{r,V}\left(|\D\G_{(r)}|_V\right)=|k\G_{(r)}|_V\times (\g^\ast)^{(r)}.
\]
The fact that $\Theta_{r,V}$ is an isomorphism when $p>\dim\G+1$ or $p^r>2\dim\G$ follows from the fact that $\Theta_r$ is an isomorphism in these cases, by Theorems~\ref{thm:|log|2} and~\ref{thm:|log|3}.
\end{proof}

We apply the theorem in the classical settings to find

\begin{corollary}\label{cor:|class|V}
Let $\G$ be a general linear group, simple algebraic group, Borel subgroup in an simple algebraic group, or a unipotent subgroup in a semisimple algebraic group which is normalized by a maximal torus.  Suppose that $p$ is very good for $\G$, or that $p>cl(\G)$ in the unipotent case.
\begin{itemize}
\item If $p>\dim\G+1$ then $\Theta_{r,V}$ is an isomorphism for every $r$ and irreducible $\D\G_{(r)}$-module $V$.
\item For arbitrary $p$ satisfying the hypothesis, and $r$ such that $p^r>2\dim\G$, the map $\Theta_{r,V}$ is an isomorphism for every irreducible $\D\G_{(r)}$-module $V$.
\end{itemize}
\end{corollary}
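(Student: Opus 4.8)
The plan is to deduce this statement immediately from Theorem~\ref{thm:|log|V} by checking that each group in the list satisfies the standing hypotheses of that theorem under the stated conditions on $p$. First I would recall that every group $\G$ appearing here is smooth, and then verify case by case that it admits a quasi-logarithm: for $\G=\GL_n$ this is Proposition~\ref{prop:ql1} (with every prime declared very good by convention); for $\G$ a simple algebraic group at a very good prime, or a Borel subgroup $\mathbb{B}$ inside such a $\G$, this is Corollary~\ref{cor:ql2}; and for $\G=\mathbb{U}$ a unipotent subgroup of a semisimple group normalized by a maximal torus with $p>cl(\mathbb{U})$, this is Proposition~\ref{prop:ql3}. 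In each case the quasi-logarithm hypothesis of Theorem~\ref{thm:|log|V} is met.

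With the quasi-logarithm in hand, I would then invoke the second assertion of Theorem~\ref{thm:|log|V}: for any irreducible $\D\G_{(r)}$-module $V$, the map
\[
\Theta_{r,V}:|\D\G_{(r)}|_V\to |k\G_{(r)}|_V\times (\g^\ast)^{(r)}
\]
is an isomorphism as soon as $p>\dim\G+1$ or $p^r>2\dim\G$. This yields the two bullet points directly: the first clause applies for all $r$ when $p>\dim\G+1$, and the second clause applies for any $p$ satisfying the running hypotheses whenever $r$ is large enough that $p^r>2\dim\G$. (One notes in passing that the numerical thresholds used in Theorem~\ref{thm:|log|V}, which ultimately go back through Lemma~\ref{lem:0Minj} and Theorems~\ref{thm:|log|2} and~\ref{thm:|log|3}, are already weaker than or equal to the conditions imposed here, so nothing further needs to be checked.)

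The only bookkeeping point is compatibility of the two flavors of hypothesis with the characteristic bounds. A very good prime is in particular odd, consistent with the standing assumption $p\neq 2$; and in the unipotent case one has $cl(\mathbb{U})<\dim\mathbb{U}$, so $p>cl(\mathbb{U})$ is automatic once $p>\dim\G+1$ and never conflicts with $p^r>2\dim\G$. Consequently there is no genuine obstacle in this corollary: all the real work has been carried out in Section~\ref{sect:ql} (construction of quasi-logarithms) and in Theorems~\ref{thm:|log|2}, \ref{thm:|log|3}, and~\ref{thm:|log|V}, and the proof here amounts to assembling those inputs.
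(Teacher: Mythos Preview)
Your proposal is correct and matches the paper's approach exactly: the paper's proof simply cites Propositions~\ref{prop:ql1}, \ref{prop:ql3}, and Corollary~\ref{cor:ql2} to obtain a quasi-logarithm and then applies Theorem~\ref{thm:|log|V}. Your additional bookkeeping remarks about compatibility of the characteristic bounds are sound but not needed for the argument.
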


\begin{proof}
By Propositions~\ref{prop:ql1} and~\ref{prop:ql3}, and Corollary~\ref{cor:ql2}, the group $\G$ will admit a quasi-logarithm.  Hence we may apply Theorem~\ref{thm:|log|V}.
\end{proof}

\bibliographystyle{abbrv}

\end{document}